\long\def\symbolfootnote[#1]#2{\begingroup%
\def\thefootnote{\fnsymbol{footnote}}\footnote[#1]{#2}\endgroup}
\def\imod#1{\allowbreak\mkern10mu({\operator@font mod}\,\,#1)}
\renewcommand*\env@matrix[1][*\c@MaxMatrixCols c]{%
  \hskip -\arraycolsep
  \let\@ifnextchar\new@ifnextchar
  \array{#1}}
\newtheorem{theorem}{Theorem}[section]
\newtheorem{lemma}[theorem]{Lemma}
\newtheorem{proposition}[theorem]{Proposition}
\newtheorem*{theorem*}{Theorem}
\theoremstyle{definition}
\newtheorem{defn}[theorem]{Definition}
\newtheorem{definition}[theorem]{Definition}
\newtheorem{remark}[theorem]{Remark}
\newtheorem{example}[theorem]{Example}
\numberwithin{equation}{section}
\newcommand{\ignore}[1]{}
\newcommand{\mynote}[1]{}
\newcommand{\bmf}[1]{\mathbf{#1}}
\newcommand{\lbd}{\lambda}
\newcommand{\Lbd}{\bm {\lambda}}
\newcommand{\Ceil}[1]{\left\lceil #1 \right\rceil}
\title[ $z$-Classes in Finite Coxeter Groups]{$z$-classes in Finite Coxeter Groups}
\author{Dilpreet Kaur}
\email{dilpreetkaur@iitj.ac.in}
\address{Indian Institute of Technology Jodhpur
N.H. 62, Nagaur Road, Karwar Jodhpur 342030
Rajasthan}
\author{Uday Bhaskar Sharma}
\email{udaybsharmaster@gmail.com}
\address{School of Advanced Engineering, UPES, Dehradun, India}
\thanks{We are thankful to Anupam Singh for his useful comments on this work. We are also thankful to Pooja Singla for her lecture series on Coxeter groups in conference 
"Recent Developments in Group Theory" at IIT Bhubaneshwar in July 2025. The first named author would like to acknowledge the support of SERB through MATRICS project MTR/2022/000231.}
\date{\today}
\subjclass[2020]{}
\keywords{}
\begin{document}
\setcounter{section}{0}
% document information
\begin{abstract}
We give the enumeration of $z$-classes in finite Coxeter groups. 
\end{abstract}
\maketitle
\section{Introduction}
Let $G$ be a group. Two elements $g_1,g_2\in G$ are called $z$-equivalent or $z$-conjugate if their centralizers $Z_G(g_1)$ and $Z_G(g_2)$ are conjugate in $G.$ Clearly, 
$z$-equivalency is an equivalence relation on $G$, equivalence classes are called $z$-classes or centralizer classes. In general, $z$-equivalency is weaker relation than conjugacy relation on group $G.$ The notion of $z$-classes was introduced by Ravi Kulkarni in \cite{RK:2007}. Since then, it has been studied for various families of groups. For more information on $z$-classes, one may refer to the survey article \cite{BS}.

In \cite{BKS:2019}, the authors studied the $z$-classes of symmetric groups and Alternating groups. They also give the count of $z$-classes for these groups. Since $S_{n+1}$ is a Coxeter group of type $A_n,$ it is natural to ask the number and structure of $z$-classes in other finite Coxeter groups. For more information on Coxeter groups, reader may refer to  \cite{Hum}.

Every finite Coxeter group is the direct product of finitely many of irreducible finite Coxeter groups. In Theorem \ref{Direct_Product}, we show that the $z$-classes of direct product of two groups $G_1$ and $G_2$ is the cartesian product of $z$-classes of $G_1$ and $z$-classes of $G_2.$ Clearly, this result can be extended to direct product of finitely many groups. Hence, it is enough to compute the $z$-classes in irreducible finite Coxeter groups.

 We know that the finite Coxeter groups of type $B_n$ and $C_n, $ are isomorphic, and they both are isomorphic to wreath product $C_2 \wr S_n$ of cyclic group $C_2$ of order $2$ and symmetric group $S_n$ of degree $n.$ The following theorem gives the number of $z$-classes in $C_2 \wr S_n.$
\begin{theorem}\label{ThMain1}
The total number of $z$-classes in $C_2 \wr S_n$ is:
\begin{equation}\label{Emain1}\sum_{{\Lbd} \vdash n}\left( \left(\prod_{i=1}^u \left(\left\lfloor \frac{l_i}{2}\right\rfloor + 1\right)\right)\left(\prod_{j=1}^v (w_j+1)\right)\right),\end{equation}
where ${\Lbd} = k_1^{l_1}\cdots k_u^{l_u}m_1^{w_1}\cdots m_v^{w_v}$ is a partition of $n$, with $1 \leq k_1 < k_2 < \cdots < k_u \leq n$ being odd positive integers and $2\leq m_1 < m_2 < \cdots <m_v \leq n$ being even positive integers.
\end{theorem}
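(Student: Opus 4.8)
The plan is to pair the classical description of conjugacy classes and centralizers in $C_2\wr S_n$ with an analysis of exactly when two such centralizers are conjugate, the whole subtlety being concentrated in one finite‑group isomorphism.

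\emph{Step 1 (conjugacy classes and centralizers).} Write an element of $G:=C_2\wr S_n=C_2^{\,n}\rtimes S_n$ as a pair $(\epsilon,\sigma)$ and attach to each cycle of $\sigma$ a \emph{sign} in $C_2$, the product of the coordinates of $\epsilon$ along that cycle. It is standard that two elements are conjugate in $G$ iff they have the same \emph{signed cycle type}, i.e.\ the same number $a_k$ of positive $k$-cycles and number $b_k$ of negative $k$-cycles for every $k$ (with $\sum_k k(a_k+b_k)=n$). Taking a representative with pairwise disjoint cycle supports, one computes that its centralizer is $\prod_{k\ge1}\bigl(Z_k^{+}\wr S_{a_k}\times Z_k^{-}\wr S_{b_k}\bigr)$, where $Z_k^{+}$, resp.\ $Z_k^{-}$, is the centralizer in $C_2\wr S_k$ of a single positive, resp.\ negative, $k$-cycle. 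A direct calculation gives $Z_k^{+}\cong C_2\times C_k$ (generated by the $k$-cycle and the indicator $c_S$ of its support $S$) and $Z_k^{-}\cong C_{2k}$ (a negative $k$-cycle has order $2k$ and already exhausts its centralizer). The crucial point is that $C_2\times C_k\cong C_{2k}$ precisely when $k$ is odd.

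\emph{Step 2 (the collapsing direction).} Fix $k$ odd, and for a representative $g$ let $R$ be the union of the supports of all the $k$-cycles of $g$. Since $k$ is odd, multiplying by $c_R$ flips the sign of every $k$-cycle, so $g':=g\,c_R$ has signed cycle type obtained from that of $g$ by swapping $a_k\leftrightarrow b_k$ and unchanged elsewhere. Any $h\in Z_G(g)$ has $S_n$-part preserving the cycle structure of $g$, hence preserving $R$, hence commuting with $c_R$; therefore $Z_G(g)\subseteq Z_G(g\,c_R)=Z_G(g')$, and symmetrically the reverse inclusion holds, so $Z_G(g)=Z_G(g')$. Iterating independently over the odd parts, any two elements whose signed cycle types agree at even $k$ and agree up to the swap $a_k\leftrightarrow b_k$ at odd $k$ are $z$-equivalent (indeed their centralizers become literally equal after a single conjugation).

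\emph{Step 3 (the distinctness direction).} Conversely, suppose $Z_G(g)$ and $Z_G(g')$ are conjugate; conjugating $g$, we may assume $Z:=Z_G(g)=Z_G(g')$. Projecting $Z$ into $S_n$ yields $\prod_k\bigl(C_k\wr S_{a_k}\times C_k\wr S_{b_k}\bigr)$ acting on $\{1,\dots,n\}$: for a point $i$, the support of the cycle of $g$ through $i$ is exactly the set of points fixed by the stabilizer of $i$ inside its orbit, and comparing orbit sizes recovers, for each $k$, the unordered pair $\{a_k,b_k\}$ and hence the multiplicity $a_k+b_k$ of $k$ in the underlying partition. Now for a single $k$-cycle support $S$, the subgroup of $Z$ of elements supported on $S$ equals $Z_{C_2\wr S_S}(g|_S)$, i.e.\ $Z_k^{+}$ or $Z_k^{-}$ according to the sign of that cycle; when $k$ is even these are non-isomorphic, so we recover the sign of every even $k$-cycle and hence the \emph{ordered} pair $(a_k,b_k)$ for even $k$. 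All of this data is conjugation-invariant, so conjugacy of $Z_G(g)$ and $Z_G(g')$ forces their signed cycle types to agree at even $k$ and to agree up to swap at odd $k$.

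\emph{Step 4 (the count).} By Steps 2--3 the $z$-classes of $C_2\wr S_n$ correspond bijectively to the data of a partition $\Lbd\vdash n$ together with, for each odd part-size $k_i$ of multiplicity $l_i$, an unordered pair summing to $l_i$ (there are $\left\lfloor \frac{l_i}{2}\right\rfloor+1$ such) and, for each even part-size $m_j$ of multiplicity $w_j$, an ordered pair summing to $w_j$ (there are $w_j+1$ such). Summing the product of these counts over all $\Lbd\vdash n$ gives \eqref{Emain1}. The main obstacle is Step 3: one must show that the embedded subgroup $Z_G(g)\le C_2\wr S_n$ genuinely remembers the sign of each even cycle yet genuinely forgets the sign pattern at odd cycles — this is precisely the content of $C_2\times C_k\cong C_{2k}$ holding iff $k$ is odd, and it has to be read off carefully from $Z_G(g)$ (first recovering the cycle supports from the permutation projection, then examining the local centralizers) rather than from its bare isomorphism type.
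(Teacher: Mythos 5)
Your proposal follows the same overall route as the paper: conjugacy classes are indexed by signed cycle types, the centralizer factors as $\prod_k\bigl(Z_k^{+}\wr S_{a_k}\times Z_k^{-}\wr S_{b_k}\bigr)$ with $Z_k^{+}\cong C_2\times C_k$ and $Z_k^{-}\cong C_{2k}$ (the paper's Propositions~\ref{prop_kl}, \ref{Prop_overline_kl} and Theorem~\ref{ThOddEven}), the sign distribution is forgotten precisely at the odd part-sizes, and the count follows. Where you differ is in how the two key facts are established, and both of your arguments are arguably cleaner. Your Step 2 (multiplying by the indicator $c_R$ of the union of the $k$-cycle supports, which every element of $Z_G(g)$ centralizes because it preserves $R$, so that $Z_G(g)=Z_G(gc_R)$ on the nose) replaces the paper's Theorem~\ref{ZThOddEven}, which deduces the same collapse from the explicit descriptions of the centralizers. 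Your Step 3 recovers the invariants from the image of $Z_G(g)$ in $S_n$ together with the local subgroups supported on single cycle supports, whereas the paper's Lemma~\ref{S_n-parts} reads the partition off the $S_n$-parts of $Z(Z_{C_2\wr S_n}(\bmf{a}))$ and is terser about why distinct admissible sign distributions remain in distinct $z$-classes; your treatment of the even parts is the more complete of the two. One step needs repair: the claim that the cycle support through $i$ equals the fixed-point set of the stabilizer of $i$ in its orbit fails when $k=1$ and $a_1=2$ (or $b_1=2$), since two same-signed fixed points form an orbit of size $2$ on which the point stabilizer acts trivially, so your recipe would read off a block of size $2$ and risk conflating $1^2$ with $2^1$. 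This does not affect the final count --- the corresponding centralizer factors have orders $8$ and $4$, hence are never conjugate --- but the recovery of the underlying partition should be patched in this degenerate case (e.g.\ via centralizer orders, or via the centre of the centralizer as in the paper). With that caveat the proof is correct.
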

The above theorem is proved in the section \ref{$z$-Classes of $C_n$}.

The finite Coxeter group of type $D_n$ is a normal subgroup of $C_2\wr S_n$ of index $2.$  To describe the number of $z$-classes Coxeter group of type  in $D_n, $ we need some type of restricted partitions of $n.$ For any integer $n,$ let $\zeta(n)$ denotes the number of partitions of $n$ with all even parts greater then or equal to $4.$ For example $\zeta(8)=2$ as $4^2$ and $8$ are the only restricted partitions of $8$ of required type. For any integer $n,$ $\delta(n)$ denotes the number of partitions of $n,$ with some ( at least one) odd part, with odd multiplicity. It is easy to see that if $n$ is an odd number, then $\delta(n)$ is same as total number of partitions of $n.$
However, $\delta(2)=0,$ and $\delta(4)=1.$ Next, let $\delta^\prime(n)$ denotes the number of partitions of $n,$ with either all even parts or its all odd parts have even  multiplicity. It is easy to see that if $n$ is an odd number, then $\delta^\prime(n)=0,$ on the other hand $\delta(2)=2,$ and $\delta(4)=4.$ It is clear that $\delta(n)+\delta^\prime(n)$ is same as the total number of partitions of $n.$

The following theorem gives the number of $z$-classes in Coxeter group of type $D_n.$
\begin{theorem}\label{DMain2}
Let ${\Lbd} = k_1^{l_1}\cdots k_u^{l_u}m_1^{r_1}\cdots m_v^{r_v}$ be a partition of $n$, with $1 \leq k_1 < k_2 < \cdots < k_u \leq n$ being odd positive integers and $2\leq m_1 < m_2 < \cdots <m_v \leq n$ being even positive integers.
The total number of $z$-classes in $D_n$ is:
\begin{enumerate}
    \item If $n$ is odd, then
\begin{equation}\label{Emain1}\sum_{{\Lbd} \vdash n}\left( \left(\prod_{i=1}^u \left(\left\lfloor \frac{l_i}{2}\right\rfloor + 1\right)\right)\left(\prod_{j=1}^v (r_j+1)\right)\right),\end{equation}
\item If $n$ is even, then\begin{equation}\label{Emain2}\begin{matrix}\sum_{\Lbd \in \delta(n) }\left( \left(\prod_{i=1}^u \left(\left\lfloor \frac{l_i}{2}\right\rfloor + 1\right)\right)\left(\prod_{j=1}^v (r_j+1)\right)\right)+\\ \sum_{{\Lbd} \in \delta^\prime(n) }\left\lceil \frac{\left(\prod_{i=1}^u \left(\left\lfloor \frac{l_i}{2}\right\rfloor + 1\right)\right)\left(\prod_{j=1}^v (r_j+1)\right)}{2}\right\rceil-\zeta(n-2)+|\delta^\prime\left(\frac{n}{2}\right)|\end{matrix},\end{equation}
where $\delta(n)$ denotes the set of partitions of $n$, where some odd part has odd multiplicity. $\delta'(n)$ denotes the set of partitions of $n$, where all the odd parts have even multiplicity. $\zeta(n-2)$ denotes the number of partitions of $n-2$, with only even parts, but does not have 2 as a part. 
\end{enumerate}\end{theorem}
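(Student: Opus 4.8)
The plan is to compare the $z$-classes of $D_n$ with those of $W:=C_2\wr S_n$, whose count is Theorem~\ref{ThMain1}. Realise $W$ as the signed permutations of $\{1,\dots,n\}$ and identify $D_n$ with the kernel of $\operatorname{sgn}\colon W\to\{\pm1\}$ (product of the sign entries), a normal subgroup of index $2$ in which $g$ lies iff $g$ has an even number of negative cycles. From the proof of Theorem~\ref{ThMain1} I would import: the parametrisation of the conjugacy classes of $W$ by a partition $\Lbd=k_1^{l_1}\cdots k_u^{l_u}m_1^{r_1}\cdots m_v^{r_v}\vdash n$ together with, for each part, the number of its negative cycles; the explicit description of $Z_W(g)$ as a direct product of wreath products of cyclic groups; and the labelling of the $z$-classes of $W$ with a fixed underlying partition $\Lbd$ by parameters $a_i\in\{0,\dots,\lfloor l_i/2\rfloor\}$ and $b_j\in\{0,\dots,r_j\}$, whose number is $P(\Lbd):=\prod_{i=1}^{u}(\lfloor l_i/2\rfloor+1)\prod_{j=1}^{v}(r_j+1)$.

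Two reduction steps come first. (i) For $g\in D_n$ one has $Z_{D_n}(g)=Z_W(g)\cap D_n$, of index $1$ or $2$ in $Z_W(g)$; as $Z_W(g)$ contains every cycle of $g$ (odd in $W$ iff that cycle is negative) and every block sign-flip $\varepsilon_B$ of a cycle supported on $B$ (odd iff $|B|$ is odd), the index is $1$ exactly when $g$ has no negative cycle and $\Lbd$ has only even parts; in particular no centraliser drops when $n$ is odd. (ii) By normality of $D_n$, conjugacy of $Z_W(g_1)$ and $Z_W(g_2)$ in $W$ forces conjugacy of $Z_{D_n}(g_1)$ and $Z_{D_n}(g_2)$ in $W$, so whether $g_1,g_2\in D_n$ are $z$-equivalent in $D_n$ depends only on whether a $W$-conjugacy between $Z_{D_n}(g_1)$ and $Z_{D_n}(g_2)$ can be taken inside $D_n$, i.e.\ it hinges on whether $N_W(Z_{D_n}(g))$ is contained in $D_n$.

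Using these, for $n$ odd I would show $\mathcal Z\mapsto\mathcal Z\cap D_n$ is a bijection between $z$-classes of $W$ and of $D_n$: it is surjective since every $z$-class of $W$ meets $D_n$ (some $l_i$ is odd, so switching $a_i\leftrightarrow l_i-a_i$ flips the parity of the number of negative cycles of a representative of a given centraliser type); its image lies in one $z$-class of $D_n$ because a $W$-conjugating element can be multiplied by the odd sign-flip of an odd-length cycle-block, which lies in $N_W(Z_W(g))$; and it is injective because $\Lbd$, the multisets $\{a_i,l_i-a_i\}$ and the $b_j$ are recoverable from the isomorphism type of $Z_{D_n}(g)$. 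This gives the stated count when $n$ is odd. For $n$ even and $\Lbd\in\delta(n)$ the very same argument runs partition by partition (again some $l_i$ is odd), giving $\sum_{\Lbd\in\delta(n)}P(\Lbd)$, the first line of \eqref{Emain2}.

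The heart of the matter is $n$ even with $\Lbd\in\delta'(n)$. Now all $l_i$ are even, so $a_i\equiv l_i-a_i\pmod2$ and a $z$-class of $W$ with data $(\Lbd,(a_i),(b_j))$ meets $D_n$ iff $\sum_ia_i+\sum_jb_j$ is even; a short parity count over the ranges $a_i\le\lfloor l_i/2\rfloor$, $b_j\le r_j$ shows exactly $\lceil P(\Lbd)/2\rceil$ of them do. Two corrections are then needed. \emph{Surviving splits:} when $\Lbd$ has only even parts the unique all-positive conjugacy class satisfies $Z_W(g)\subseteq D_n$ and so splits into two $D_n$-classes; by (ii) these re-fuse as $z$-classes iff $N_W(Z_W(g))$ contains an odd element, and computing this normaliser — its elements are the $(\eta;\sigma)$ with $\sigma\in N_{S_n}(\operatorname{im}Z_W(g))$ and $\eta$ subject to a parity condition dictated by the cyclic factors and the block swaps — shows it is contained in $D_n$ exactly for $\Lbd=2\mu$ with $\mu\in\delta'(n/2)$, yielding the extra $+|\delta'(n/2)|$. \emph{Cross-fusions:} for some $\Lbd\in\delta'(n)$ there are $g_1,g_2\in D_n$ with $Z_W(g_1)$ not conjugate to $Z_W(g_2)$ in $W$ yet $Z_W(g_1)\cap D_n$ conjugate to $Z_W(g_2)\cap D_n$ in $D_n$; analysing the even-part centraliser factors (where the $b_j=0$ versus $b_j=r_j$ alternative, combined with the parity constraint and the special role of the part $2$, can force such a coincidence) should show the number of these fusions is $\zeta(n-2)$, giving $-\zeta(n-2)$. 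Combining the $\lceil P(\Lbd)/2\rceil$ count with these two corrections yields the remaining terms of \eqref{Emain2}. I expect the hard part to be precisely this last paragraph: extracting clean subgroup-theoretic criteria for splitting and fusion in $D_n$ and then matching the split-surplus to $|\delta'(n/2)|$ and the fusion-deficit to $\zeta(n-2)$; the rest is index-$2$ bookkeeping layered on Theorem~\ref{ThMain1}.
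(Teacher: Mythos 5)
Your architecture coincides with the paper's: reduce everything to $C_2\wr S_n$, treat $\delta(n)$ and $\delta'(n)$ separately, count the $\delta'(n)$ classes meeting $D_n$ by a parity argument giving $\lceil P(\Lbd)/2\rceil$ (this is exactly Lemma~\ref{LemmaCount}), and then apply the two corrections $+|\delta'(n/2)|$ and $-\zeta(n-2)$. Your normalizer criterion for when the two halves of a split class re-fuse is a legitimate reformulation of what the paper establishes in Theorem~\ref{Th_split_CC_in_D_n}. The genuine gap sits precisely where you say the hard part is, and it is not just that the correction terms are left as ``should show'': you describe the $-\zeta(n-2)$ correction as a fusion between two $z$-classes attached to the \emph{same} underlying partition $\Lbd\in\delta'(n)$. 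That is not the phenomenon. The double-counted classes arise because an element whose underlying partition is $1^2\mu$ (a non-split class, with sign data $1^2$ or $\bar1^2$) is $z$-conjugate in $D_n$ to an element with the \emph{different} underlying partition $2^1\mu$ (a split class), where $\mu\vdash n-2$ has all parts even and at least $4$; this is Proposition~\ref{Split_and_non-split_in_D_n}, and it is the unique exception to the rule that $z$-conjugacy in $D_n$ preserves the conjugacy class of the $S_n$-part (Lemma~\ref{S_n_parts_D_n}). Since your bookkeeping is organised partition by partition, a fusion that crosses partitions is invisible to it: you would first have to prove that the underlying partition is a $z$-invariant of $D_n$ \emph{except} in this one configuration, which is where the real work lies. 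The paper does this by comparing $Z(Z_{D_n}(\cdot))$ with $Z(Z_{C_2\wr S_n}(\cdot))\cap D_n$ and pinning down exactly when they differ (Lemma~\ref{Center_of_centralizer_in_WDn}), a step your plan has no substitute for.

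A secondary weak point: for $n$ odd you justify injectivity of $\mathcal Z\mapsto\mathcal Z\cap D_n$ by claiming the classifying data is recoverable from the \emph{isomorphism type} of $Z_{D_n}(g)$. $z$-classes are defined by conjugacy of centralizers, so recoverability from the abstract isomorphism type is stronger than needed and would itself require proof (the centralizers are products of wreath products of cyclic groups, and accidental isomorphisms between such products are not obviously excluded). The clean argument, used in Lemma~\ref{LemmaOddWDn}, is that $[\bmf{-1}_n;(1)]$ is central in $C_2\wr S_n$ and lies outside $D_n$ when $n$ is odd, so $Z_{C_2\wr S_n}(g)=Z_{D_n}(g)\cup[\bmf{-1}_n;(1)]Z_{D_n}(g)$ and any $D_n$-conjugacy of the small centralizers automatically extends to the large ones.
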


The above theorem is proved in the section \ref{$z$-Classes of $D_n$}.

The Coxeter groups of type $I_2(n)$ are dihedral groups of order $2n.$ The $z$-classes for dihedral groups have been computed in Example 5.1 in \cite{BS}. We compute the same in theorem \ref{z-classes-Dihedral-group} for the sake of completeness.

The number of $z$-classes in finite Coxeter groups of exceptional types are given in the table \ref{Table-exceptional}. We used the computer algebra software GAP \cite{GAP4} and its package SLA \cite{SLA} for computing this table. The code used to compute $z$-classes is given at the end of section \ref{z-classes_of_exceptional_types}. 

Since every Weyl group can be realized as a Coxeter group. This article also solves the Problem 5.6 given in the survey article \cite{BS}.

\section{Preliminaries}

In this section, we collect some basic results regarding $z$-classes of a groups, which will be used in later sections. Since the finite Coxeter groups are a direct product of irreducible finite Coxeter groups, the following theorem implies that it is enough to compute the number of $z$-classes in irreducible finite Coxeter groups.

\begin{theorem}\label{Direct_Product}
     Let $G_1$ and $G_2$ be two finite groups. Let $z_1$ be the number of $z$-classes of $G_1$ and $z_2$ be the number of $z$-classes of $G_2$. Then the number of $z$-classes of the direct product $G_1 \times G_2$ is $z_1z_2$.
 \end{theorem}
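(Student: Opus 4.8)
The plan is to reduce the statement to two elementary facts about direct products, one concerning centralizers and one concerning conjugacy of ``rectangular'' subgroups. Write $G = G_1 \times G_2$ and let $\pi_i \colon G \to G_i$ denote the two coordinate projections. The first fact I would record is that for every element $(g_1, g_2) \in G$,
\[
Z_G\big((g_1,g_2)\big) = Z_{G_1}(g_1) \times Z_{G_2}(g_2),
\]
which is immediate, since $(x_1,x_2)$ commutes with $(g_1,g_2)$ precisely when $x_1$ commutes with $g_1$ and $x_2$ commutes with $g_2$. In particular, centralizers in $G$ are always of product form.

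The second fact is the following lemma about subgroups of product form: if $H_i, K_i \leq G_i$ for $i = 1, 2$, then $H_1 \times H_2$ is conjugate to $K_1 \times K_2$ in $G$ if and only if $H_1$ is conjugate to $K_1$ in $G_1$ and $H_2$ is conjugate to $K_2$ in $G_2$. The ``if'' direction is clear, since conjugating by $(h_1, h_2)$ acts coordinatewise. For the ``only if'' direction, suppose $(h_1, h_2)(H_1 \times H_2)(h_1,h_2)^{-1} = K_1 \times K_2$; applying $\pi_1$ and using $\pi_1(H_1 \times H_2) = H_1$ yields $h_1 H_1 h_1^{-1} = K_1$, and symmetrically $h_2 H_2 h_2^{-1} = K_2$. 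Combining the two facts, $(g_1,g_2)$ and $(g_1', g_2')$ are $z$-equivalent in $G$ if and only if $g_1$ is $z$-equivalent to $g_1'$ in $G_1$ and $g_2$ is $z$-equivalent to $g_2'$ in $G_2$. Hence the map sending the $z$-class of $(g_1, g_2)$ to the ordered pair consisting of the $z$-class of $g_1$ in $G_1$ and the $z$-class of $g_2$ in $G_2$ is well defined; it is injective by the equivalence just established and surjective because every pair of elements $(g_1, g_2)$ lies in $G$. Counting the elements of the target set gives $z_1 z_2$.

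The only step that requires genuine care — and where a hasty argument could slip — is the ``only if'' half of the rectangular-subgroups lemma: conjugating $H_1 \times H_2$ inside $G$ might a priori mix the two factors, but because a conjugation in a direct product acts coordinatewise and the projections of a product subgroup recover its factors exactly, no such mixing can occur. Everything else is routine bookkeeping, and finiteness of $G_1, G_2$ is used only to make the counts $z_1, z_2$ meaningful integers.
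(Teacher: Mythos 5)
Your proposal is correct and follows essentially the same route as the paper: both arguments rest on the factorization $Z_{G_1\times G_2}((g_1,g_2)) = Z_{G_1}(g_1)\times Z_{G_2}(g_2)$ together with the observation that conjugation in a direct product acts coordinatewise, so that product subgroups are conjugate if and only if their factors are. Your explicit ``rectangular subgroups'' lemma merely spells out the projection step that the paper's proof performs implicitly when it equates the two factors.
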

 \begin{proof} 

     By the definition of $z$-classes, two elements $(a_1,a_2)$ and $(b_1, b_2)$ in $G_1 \times G_2$ are $z$-conjugate iff $Z_{G_1\times G_2} ((a_1,a_2))$ is conjugate to $Z_{G_1 \times G_2}(b_1, b_2).$ 
     This means that $\exists$ $(g_1, g_2) \in G_1 \times G_2$ such that $$(g_1,g_2)Z_{G_1\times G_2} ((a_1,a_2))(g_1^{-1},g_2^{-1}) = Z_{G_1 \times G_2}((b_1, b_2))$$

     But we know that $Z_{G_1 \times G_2}((a_1,a_2)) = Z_{G_1}(a_1) \times Z_{G_2}(a_2)$. Likewise for $Z_{G_1 \times G_2}((b_1, b_2))$.
     Hence we have 
     \begin{eqnarray*}
        Z_{G_1}(b_1) \times Z_{G_2}(b_2) &=& Z_{G_1 \times G_2}((b_1, b_2))\\
        &=& (g_1,g_2)Z_{G_1\times G_2} ((a_1,a_2))(g_1^{-1},g_2^{-1})\\
        &=& (g_1Z_{G_1}(a_1)g_1^{-1}) \times (g_2Z_{G_1}(a_2)g_2^{-1})
     \end{eqnarray*}
     This means that $Z_{G_1}(b_1) = g_1Z_{G_1}(a_1)g_1^{-1}$  and $Z_{G_2}(b_2) = g_2Z_{G_2}(a_2)g_2^{-1}$. Thus $a_1$ and $b_1$ are $z$-conjugate in $G_1$, and $a_2$ and $b_2$ are $z$-conjugate in $G_2$. 

     If $a_1$ and $b_1$ are $z$-conjugate in $G_1$, and $a_2$ and $b_2$ are $z$-conjugate in $G_2$, then it is clear that $(a_1, a_2)$ and $(b_1, b_2)$ are $z$-conjugate in $G_1 \times G_2$. Thus, $$\{\tau: \tau\text{ is a $z$ class in }G_1\times G_2 \} = \{\tau_1: \tau_1\text{ is a $z$ class in }G_1\} \times \{\tau_2: \tau_2\text{ is a $z$ class in }G_2\}  $$

     Thus the number of $z$ classes in $G_1$ and $G_2$ is $z_1z_2$.
 \end{proof}

As the Coxeter group $D_n$ is an index 2 subgroup of Coxeter group $C_2 \wr S_n$ of type $C_n.$ We recall some connections between $z$-classes of a group $G$ with $z$-classes of a its subgroup $H$ with $[G:H]=2.$ We know that a conjugacy class $C$ in $G$ is either a conjugacy class in $H$ as well, or $C$ decomposes into two conjugacy classes in $H.$ If conjugacy class $C$ in $H$ is also a conjugacy class in $G,$ then we call that $C$ is a non-split conjugacy class of $H,$ otherwise it is called split conjugacy class.

\begin{lemma}\label{Lemma_split}
Let $H$ be a subgroup of $G$ such that $[G:H]=2.$ Let $x,y \in H$ and both lie in split conjugacy classes. If $x$ is $z$-conjugate to $y$ in $H,$ then they are $z$-conjugate in $G.$
\end{lemma}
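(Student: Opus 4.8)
The plan is to reduce the statement to the standard centralizer criterion for splitting. The key observation is that for $x\in H$ with $[G:H]=2$, the $H$-conjugacy class of $x$ is split exactly when $Z_G(x)\subseteq H$, equivalently $Z_G(x)=Z_H(x)$. Once this is in place, the lemma follows almost immediately, since a conjugating element from $H$ automatically conjugates the (equal) centralizers over $G$ as well.

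First I would record the splitting criterion by an index count, using that $Z_H(x)=H\cap Z_G(x)$ always holds and that $H\trianglelefteq G$. If $Z_G(x)\not\subseteq H$, then $HZ_G(x)=G$, whence $[G:Z_G(x)]=[HZ_G(x):Z_G(x)]=[H:H\cap Z_G(x)]=[H:Z_H(x)]$; comparing with the orbit sizes $|x^G|=[G:Z_G(x)]$ and $|x^H|=[H:Z_H(x)]$ shows the $G$-class of $x$ coincides with its $H$-class, so it does not split. If instead $Z_G(x)\subseteq H$, then $[G:Z_G(x)]=[G:H]\,[H:Z_G(x)]=2[H:Z_H(x)]$, so $|x^G|=2|x^H|$ and the class splits into two $H$-classes.

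Next I would apply the hypothesis: since $x$ and $y$ both lie in split conjugacy classes, $Z_G(x)=Z_H(x)$ and $Z_G(y)=Z_H(y)$. As $x$ is $z$-conjugate to $y$ in $H$, there is $h\in H$ with $hZ_H(x)h^{-1}=Z_H(y)$. Because $h\in H\subseteq G$ and the two centralizers agree for these elements, this gives $hZ_G(x)h^{-1}=Z_G(y)$, so $Z_G(x)$ and $Z_G(y)$ are conjugate in $G$, i.e.\ $x$ and $y$ are $z$-conjugate in $G$.

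I do not expect a genuine obstacle here: the content is entirely in the split-class criterion, which is a routine orbit-stabilizer argument, and the rest is a one-line transfer of the conjugating element from $H$ to $G$. The only point needing a little care is the identity $Z_H(x)=H\cap Z_G(x)$, which is what lets the $H$-conjugacy of the small centralizers upgrade to $G$-conjugacy of the full centralizers under the splitting hypothesis.
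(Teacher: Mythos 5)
Your proposal is correct and follows essentially the same route as the paper: both identify that split classes satisfy $Z_H(x)=Z_G(x)$ and then transfer the conjugating element $h\in H$ directly. The only difference is that you additionally justify the splitting criterion via an orbit--stabilizer count, which the paper simply asserts as known.
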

\begin{proof}
    As $x,y\in H$ both lie in split conjugacy classes, we have $Z_H(x)=Z_G(x)$ and $Z_H(y)=Z_G(y).$ Now if $x$ is $z$-conjugate to $y$ in $H,$ then there exist $h\in H,$ such that $Z_H(x)=hZ_H(y)h^{-1},$ which implies $Z_G(x)=hZ_G(y)h^{-1}.$
\end{proof}
\begin{lemma}\label{Lemma1_index2}
Let $H$ be a subgroup of $G$ such that $[G:H]=2,$ and $C_1,C_2$ be two non-split conjugacy classes of $H.$ Let $x\in C_1$ and $y\in C_2.$ If $x$ and $y$ are $z$-conjugate in $G, $ then $x$ and $y$ are $z$-conjugate in $H.$
\end{lemma}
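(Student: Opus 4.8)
The plan is to take a $G$-conjugating element between the two centralizers and modify it into one that lies in $H$, using the non-split hypothesis on $C_1$. The reduction rests on two elementary observations that I would record first. For any $a\in H$ we have $Z_H(a)=Z_G(a)\cap H$, and for any $h\in H$ conjugation by $h$ maps $H$ to itself, so
$$hZ_H(a)h^{-1}=h\bigl(Z_G(a)\cap H\bigr)h^{-1}=\bigl(hZ_G(a)h^{-1}\bigr)\cap H.$$
Consequently, as soon as we produce an element $h\in H$ with $hZ_G(x)h^{-1}=Z_G(y)$, intersecting with $H$ gives $hZ_H(x)h^{-1}=Z_H(y)$, i.e.\ $x$ and $y$ are $z$-conjugate in $H$.

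Second, I would translate the statement that $C_1$ is non-split into the statement $Z_G(x)\not\subseteq H$. Indeed, non-split means the $H$-conjugacy class of $x$ equals its $G$-conjugacy class, so $[H:Z_H(x)]=[G:Z_G(x)]$; together with $[G:H]=2$ and $Z_H(x)\le Z_G(x)$ this forces $[Z_G(x):Z_H(x)]=2$. Since $Z_H(x)=Z_G(x)\cap H$, this yields in particular an element $t\in Z_G(x)$ with $t\notin H$.

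Now assume $x$ and $y$ are $z$-conjugate in $G$, say $gZ_G(x)g^{-1}=Z_G(y)$ for some $g\in G$. If $g\in H$, the first paragraph already finishes the argument. If $g\notin H$, then since $t\notin H$ and $[G:H]=2$ we have $gt\in H$; and because $t\in Z_G(x)$, conjugation by $t$ fixes $Z_G(x)$ setwise, so
$$(gt)Z_G(x)(gt)^{-1}=g\bigl(tZ_G(x)t^{-1}\bigr)g^{-1}=gZ_G(x)g^{-1}=Z_G(y).$$
Taking $h=gt\in H$ and applying the first paragraph, $x$ and $y$ are $z$-conjugate in $H$.

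I do not expect a serious obstacle: the argument is short, and the only delicate point is the bookkeeping in the second paragraph, where the precise meaning of ``non-split'' has to be converted into $Z_G(x)\not\subseteq H$ — this is the sole place the hypothesis on $C_1$ is used. (The hypothesis that $C_2$ is non-split is not actually needed for this implication, though it costs nothing to keep it in the statement for symmetry with the surrounding lemmas.)
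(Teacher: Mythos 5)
Your argument is correct and is essentially the paper's own proof: both fix the $G$-conjugator by multiplying it with an element of the relevant centralizer lying outside $H$ (you use $t\in Z_G(x)\setminus H$, the paper uses $g'\in Z_G(y)\setminus H$; the difference is only which side the conjugation is written on), and then intersect with $H$. Your explicit index computation showing that non-splitness forces $Z_G(x)\not\subseteq H$ is a point the paper leaves implicit, and your remark that only one of the two non-split hypotheses is actually needed is accurate.
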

\begin{proof}
    As $C_1$ and $C_2$ are non split conjugacy classes in $H,$ we have $Z_H(x)=Z_G(x)\cap H, $ and $Z_H(y)=Z_G(y)\cap H, $ where $x\in C_1$ and $y\in C_2.$ Let $x$ and $y$ be $z$-conjugate in $G, $ there exist $g\in G,$ such that $Z_G(x)=gZ_G(y)g^{-1}.$ If $g\in H,$ then we are done, else choose $g^\prime\in Z_G(y)\setminus H,$ and take $h=gg^\prime.$ Now $$Z_G(x)=gZ_G(y)g^{-1}=gg^\prime Z_G(y)(g^\prime)^{-1}g^{-1}=hZ_G(y)h^{-1}=Z_G(hyh^{-1})$$ 
    Now, taking intersection with $H$ on both sides, we get $Z_H(x)=Z_H(hyh^{-1})=hZ_H(y)h^{-1}.$ This completes the proof.
\end{proof}
With same notations as Lemma \ref{Lemma1_index2},
we have if $Z_G(x)=Z_G(y),$ 
then $Z_G(x)\cap H = Z_G(y)\cap H,$ and hence $Z_H(x)=Z_H(y).$ 
The converse does not hold, for example consider Alternating group $A_6$ of degree 6, we know that $A_6$ is a subgroup of $S_6,$ and $[S_6; A_6]=2.$ It is easy to see that $Z_{A_6}((4,5,6))=Z_{A_6}((1,2,3)(4,5,6)),$ but $Z_{S_6}((4,5,6))\neq Z_{S_6}((1,2,3)(4,5,6)).$ The following lemmas give the partial converse.

\begin{lemma}\label{Lemma1_index2_converse}
With the same notation as Lemma \ref{Lemma1_index2}, If $Z_H(x)=Z_H(y),$ then $Z_G(x)=Z_G(y)$ if and only if $(Z_G(x)\cap Z_G(y))\setminus H\neq \Phi.$
\end{lemma}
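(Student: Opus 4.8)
The plan is to use the standard structural fact about non-split classes under an index-$2$ subgroup together with a coset decomposition of the centralizers. The key observations to set up first are the elementary identities valid for any $g \in G$: one always has $Z_H(g) = Z_G(g) \cap H$, so that $[Z_G(g) : Z_H(g)]$ divides $[G:H] = 2$, and this index equals $2$ precisely when $Z_G(g) \not\subseteq H$. Since $C_1$ and $C_2$ are non-split conjugacy classes of $H$, we have $Z_G(x) \not\subseteq H$ and $Z_G(y) \not\subseteq H$, hence $[Z_G(x):Z_H(x)] = [Z_G(y):Z_H(y)] = 2$; in particular, $Z_G(x) = Z_H(x) \sqcup g Z_H(x)$ for any $g \in Z_G(x) \setminus H$, and similarly for $y$.

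For the forward direction, assume $Z_G(x) = Z_G(y)$. Because $C_1$ is non-split, I would pick any $g \in Z_G(x) \setminus H$; then $g \in Z_G(y) \setminus H$ as well, so $g \in (Z_G(x) \cap Z_G(y)) \setminus H$, which is therefore nonempty.

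For the converse, suppose there exists $g \in (Z_G(x) \cap Z_G(y)) \setminus H$. Since $g \in Z_G(x)$ and $g \notin H \supseteq Z_H(x)$, the element $g$ represents the nontrivial coset of $Z_H(x)$ in $Z_G(x)$, so $Z_G(x) = Z_H(x) \sqcup g Z_H(x)$; the same reasoning applied to $y$ gives $Z_G(y) = Z_H(y) \sqcup g Z_H(y)$. Now invoke the hypothesis $Z_H(x) = Z_H(y)$: the two right-hand sides coincide, whence $Z_G(x) = Z_G(y)$.

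I do not anticipate a real obstacle here. The only point requiring a little care is the claim that $Z_H(x)$ has index exactly $2$ in $Z_G(x)$ (and likewise for $y$), so that a single coset representative lying outside $H$ pins down the whole centralizer; for the converse direction this is automatic from the mere existence of $g$, while for the forward direction it is exactly what the non-split hypothesis supplies.
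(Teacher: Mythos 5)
Your proof is correct and follows essentially the same route as the paper's: the converse direction uses the identical coset decomposition $Z_G(x)=Z_H(x)\cup gZ_H(x)$ and $Z_G(y)=Z_H(y)\cup gZ_H(y)$ for the common representative $g$, and the forward direction extracts an element of $Z_G(x)\setminus H$ from the non-split hypothesis. If anything, your forward direction is stated more cleanly than the paper's (which appears to contain a typo, asserting $Z_G(x)\cap H\neq\Phi$ where $Z_G(x)\setminus H\neq\Phi$ is meant).
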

\begin{proof}
Let $Z_G(x)=Z_G(y),$ then $(Z_G(x)\cap Z_G(y))=Z_G(x),$ as $x\in C_1,$ and $C_1$ is a non-split conjugacy class of $H.$ We have $Z_G(x)\cap H \neq \Phi.$
Conversely let $Z_H(x)=Z_H(y),$ and $(Z_G(x)\cap Z_G(y))\setminus H\neq \Phi.$ Let $g \in (Z_G(x)\cap Z_G(y))\setminus H.$ Then $Z_G(x)=Z_H(x)\cup gZ_H(x)$ and $Z_G(y)=Z_H(y)\cup gZ_H(y).$ Using $Z_H(x)=Z_H(y),$ we get $gZ_H(x)=gZ_H(y),$ and hence $Z_G(x)=Z_G(y).$
\end{proof}
\begin{lemma}\label{Lemma2_index2_converse}
With the same notation as Lemma \ref{Lemma1_index2}, If $x$ and $y$ are $z$-conjugate in $H,$ and $Z(Z_G(x))\cap H=Z(Z_H(x)),$ then $x$ and $y$ are $z$-conjugate in $G.$
\end{lemma}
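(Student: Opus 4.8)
The plan is to reduce to the case $Z_H(x)=Z_H(y)$ and then apply Lemma~\ref{Lemma1_index2_converse}. Since $x$ and $y$ are $z$-conjugate in $H$, there is an $h\in H$ with $Z_H(x)=hZ_H(y)h^{-1}=Z_H(hyh^{-1})$. I would replace $y$ by $y':=hyh^{-1}$; this element still lies in a non-split conjugacy class of $H$ (it is $H$-conjugate to $y$, hence lies in $C_2$), and since $Z_G(y')=hZ_G(y)h^{-1}$, once $x$ and $y'$ are shown to be $z$-conjugate in $G$ it follows that $x$ and $y$ are too. Note also that the hypothesis $Z(Z_G(x))\cap H=Z(Z_H(x))$ involves only $x$, so it is unaffected by this replacement. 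Hence from now on we may assume $Z_H(x)=Z_H(y)$.

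The key observation is that $y$ lies in the centre of its own centralizer in $H$, i.e.\ $y\in Z(Z_H(y))=Z(Z_H(x))$. This is where the hypothesis enters: $Z(Z_H(x))=Z(Z_G(x))\cap H\subseteq Z(Z_G(x))$, so $y\in Z(Z_G(x))$, meaning $y$ commutes with every element of $Z_G(x)$. (For $x$ in a non-split class the inclusion $Z(Z_G(x))\cap H\subseteq Z(Z_H(x))$ is automatic, since any such element lies in $Z_G(x)\cap H=Z_H(x)$ and centralizes $Z_H(x)$; thus the actual content of the hypothesis is the reverse inclusion $Z(Z_H(x))\subseteq Z(Z_G(x))$, which need not hold in general.)

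Now, because $C_1$ is non-split we have $Z_G(x)\not\subseteq H$; pick $s\in Z_G(x)\setminus H$. By the previous step $y$ commutes with $s$, so $s\in Z_G(y)$, and therefore $s\in\bigl(Z_G(x)\cap Z_G(y)\bigr)\setminus H$, which is thus non-empty. Since $Z_H(x)=Z_H(y)$, Lemma~\ref{Lemma1_index2_converse} now gives $Z_G(x)=Z_G(y)$; in particular $x$ and $y$ are $z$-conjugate in $G$, as required.

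All the individual steps are routine; the only place demanding a little care is the opening reduction, where one must check that conjugating $y$ within $H$ preserves both the non-split assumption and the statement to be proved while leaving the hypothesis (which concerns only $x$) intact. I expect this bookkeeping, rather than any genuine difficulty, to be the main obstacle.
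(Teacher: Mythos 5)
Your proof is correct and follows essentially the same route as the paper's: both hinge on the observation that (a conjugate of) $y$ lies in $Z(Z_H(x))$, which the hypothesis places inside $Z(Z_G(x))$, forcing $Z_G(x)$ to centralize it. The only cosmetic difference is the final step, where you first normalize to $Z_H(x)=Z_H(y)$ and invoke Lemma~\ref{Lemma1_index2_converse}, while the paper keeps the conjugating element explicit and closes with the cardinality count $|Z_G(x)|=|gZ_G(y)g^{-1}|$.
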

\begin{proof}
    Let $g\in H$ such that $Z_H(x)=gZ_H(y)g^{-1}.$
    As $x$ and $y,$ both belong to non-split conjugacy classes, we have $|Z_G(x)|=2|Z_H(x)|$ and $|Z_G(y)|=2|Z_H(y)|.$ Using $Z_H(x)=gZ_H(y)g^{-1},$ we get $|Z_G(x)|=|gZ_G(y)g^{-1}|,$ and $Z(Z_H(x))=Z(gZ_H(y)g^{-1})=gZ(Z_H(y))g^{-1}.$ This implies $gyg^{-1}\in gZ(Z_H(y))g^{-1}=Z(Z_H(x)). $
    Using $Z(Z_G(x))\cap H=Z(Z_H(x)),$ we get $Z(Z_H(x))\subseteq Z(Z_G(x)),$ which implies $gyg^{-1}\in Z(Z_G(x)),$ which in turn gives us $Z_G(x)\subseteq gZ_G(y)g^{-1}.$ Hence $Z_G(x)=gZ_G(y)g^{-1}$ using $|Z_G(x)|=|gZ_G(y)g^{-1}|.$ 
    \end{proof}
The following proposition will be useful in determining the relation between center of centralizers in groups $C_2 \wr S_n$ and $D_n.$ 

\begin{proposition}\label{Regarding_center_of_centralizers}
 Let $G = G_1 \times G_2 \times \dots \times G_n$ be a finite group and $H$ be a subgroup of $G$ and $[G:H]=2.$ Let $h = (h_1, h_2, \dots , h_n)\in H$ and $h_i\in G_i$ for all $1\leq i\leq n.$ Let $\bmf{G_i}$ denote the subgroup of $G$ isomorphic to $G_i$ and $\bmf{h_i} = (1, 1, \dots, h_i, \dots 1, 1) \in H$ with $h_i$ at the $i^{th}$ position. Let $|Z_H(\bmf{h_i})\cap \bmf{G_i} | < |Z_{G_i}(h_i)|$ for at least two distinct $i, j.$ Then $Z(Z_H(h))=Z(Z_G(h))\cap H.$  
\end{proposition}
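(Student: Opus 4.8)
The plan is to translate everything into the language of the index‑two surjection $\phi\colon G\to\mathbb{Z}/2\mathbb{Z}$ with $\ker\phi = H$, and then to use the product decomposition of $Z_G(h)$ coordinate by coordinate. Write $K_i = Z_{G_i}(h_i)$, so that $Z_G(h) = K_1\times\cdots\times K_n$ and hence $Z(Z_G(h)) = Z(K_1)\times\cdots\times Z(K_n)$, while $Z_H(h) = Z_G(h)\cap H = \ker(\phi|_{Z_G(h)})$. A short computation shows that $Z_G(\bmf{h_i})\cap\bmf{G_i}$ is exactly the copy of $K_i$ sitting in the $i$-th factor of $G$, so the hypothesis $|Z_H(\bmf{h_i})\cap\bmf{G_i}| < |Z_{G_i}(h_i)|$ says precisely that $\phi$ restricted to that copy of $K_i$ is nontrivial; abbreviate $\phi_i := \phi|_{\bmf{G_i}}$. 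Thus the assumption hands us two distinct indices, say $p$ and $q$, with $\phi_p|_{K_p}$ and $\phi_q|_{K_q}$ both nontrivial.

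First I would dispatch the easy inclusion $Z(Z_G(h))\cap H\subseteq Z(Z_H(h))$: an element of the left side lies in $Z_G(h)$ and in $H$, hence in $Z_H(h)$, and it commutes with all of $Z_G(h)$, in particular with $Z_H(h)$; this needs only $Z_H(h)\subseteq Z_G(h)$ and no hypothesis.

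The content is the reverse inclusion $Z(Z_H(h))\subseteq Z(Z_G(h))\cap H$. Let $z = (z_1,\dots,z_n)\in Z(Z_H(h))$. Since $z\in Z_H(h)\subseteq H$ already, it is enough to show $z\in Z(Z_G(h))$, i.e. $z_\ell\in Z(K_\ell)$ for every $\ell$. Fix $\ell$ and $a\in K_\ell$ and consider $\bmf{a} := (1,\dots,a,\dots,1)\in Z_G(h)$. If $\phi_\ell(a) = 0$ then $\bmf{a}\in Z_H(h)$, so $z$ commutes with $\bmf{a}$ and comparing $\ell$-th coordinates gives $z_\ell a = az_\ell$. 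If $\phi_\ell(a) = 1$ then $\phi_\ell|_{K_\ell}$ is nontrivial, so $\ell\in\{p,q\}$; by the two-index hypothesis there is a different distinguished index $m$ and an element $b\in K_m$ with $\phi_m(b) = 1$. Then $\bmf{a}\bmf{b}\in Z_G(h)$ and, by additivity of $\phi$ on the commuting pair $\bmf{a},\bmf{b}$, $\phi(\bmf{a}\bmf{b}) = \phi_\ell(a)+\phi_m(b) = 0$, so $\bmf{a}\bmf{b}\in Z_H(h)$; since $z$ commutes with $\bmf{a}\bmf{b}$, which is supported only on coordinates $\ell$ and $m$, comparing $\ell$-th coordinates once more yields $z_\ell a = az_\ell$. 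Either way $z_\ell\in Z(K_\ell)$, and as $\ell$ was arbitrary, $z\in Z(K_1)\times\cdots\times Z(K_n) = Z(Z_G(h))$.

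The only real obstacle is the case $\phi_\ell(a) = 1$, where $\bmf{a}$ need not lie in $Z_H(h)$ and so $z$ is not a priori guaranteed to commute with it. The device --- and the precise reason the hypothesis demands two indices rather than one --- is to ``repair'' $\bmf{a}$ to an element of $H$ by multiplying by a $\phi$-odd element $\bmf{b}$ taken from a different factor; since $\bmf{b}$ is supported away from coordinate $\ell$, commutation of $z$ with the repaired element still forces commutation in coordinate $\ell$. I would also make sure to prove carefully the two identifications underpinning the argument: that $Z_G(\bmf{h_i})\cap\bmf{G_i}$ is the natural copy of $Z_{G_i}(h_i)$, and that $\phi(\bmf{a}\bmf{b}) = \phi_\ell(a) + \phi_m(b)$, since these are what make the counting hypothesis actually bite.
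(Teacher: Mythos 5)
Your proof is correct and follows essentially the same route as the paper's: the easy inclusion from $Z_H(h)=Z_G(h)\cap H$, and for the reverse inclusion the same ``repair'' device of multiplying a $\phi$-odd element $\bmf{a}$ supported on coordinate $\ell$ by a $\phi$-odd element $\bmf{b}$ from a second distinguished factor so that the product lands in $Z_H(h)$ (the paper phrases this with cosets and argues by contradiction rather than via the homomorphism $\phi$). The only slip is the claim that $\phi_\ell(a)=1$ forces $\ell\in\{p,q\}$ --- it only forces $\ell$ to be a distinguished index --- but since there are at least two such indices a distinguished $m\neq\ell$ still exists, which is all your argument actually uses.
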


\begin{proof}
  Let $g= (g_1, g_2, \dots , g_n)\in G. $ We can write $g = \pi_{i=1}^{n}\bmf{g_i}, $ where $\bmf{g_i} = (1,\dots, g_i, \dots 1)$ with $g_i$ at the $i^{th}$ position. Since $H$ is an index $2$ subgroup of $G,$ we have $G= H \cup Hx$ for some $x\in G.$ This implies $g = \pi_{i=1}^{n}\bmf{g_i} \in H $ if and only if $\bmf{g_i}\in Hx$ for even number of $i.$ 

  Let $h = (h_1, h_2, \dots , h_n)\in H$ and $h_i\in G_i$ for all $1\leq i\leq n.$ 
  It is easy to see that $Z_H(h)=Z_G(h)\cap H.$ This implies $Z(Z_H(h))\supseteq Z(Z_G(h))\cap H.$ To prove the converse,  let $k\in Z(Z_H(h)).$ Since $Z(Z_H(h))\subseteq H\subseteq G,$ let $k = (k_1, k_2, \dots , k_n)\in H$ and $k_i\in G_i$ for all $1\leq i\leq n.$  We are done if we show that $k_i \in Z(Z_{G_i}(h_i))$ for all $1\leq i\leq n$ as $Z_G(h)= \pi_{i=1}^{n} Z_{G_i}(h_i).$
  
  We prove this by contradiction. Let $k_i \notin Z(Z_{G_i}(h_i))$ for some $i.$ This implies that there exist $g_i \in Z_{G_i}(h_i)$ such that $k_ig_i\neq g_ik_i.$ We divide the proof into two cases.
  
 \begin{enumerate}
  \item Let $\bmf{g_i}\in H.$ This implies $\bmf{g_i} \in Z_H(h)$ as $g_i \in Z_{G_i}(h_i).$ Therefore $k\bmf{g_i}=\bmf{g_i}k$ as $k\in Z(Z_H(h)).$ Using this, we conclude that $k_ig_i= g_ik_i,$ which contradicts our assumption.
  \item Let $\bmf{g_i}\notin H.$ It is easy to see that if for some $\bmf{g_k} \in Z_H(\bmf{h_j})\cap \bmf{G_j}, $ then $g_k \in Z_{G_j}(h_j).$ Given there exists $j\neq i$ such that $|Z_H(\bmf{h_j})\cap \bmf{G_j} | < |Z_{G_j}(h_j)|.$ Let $g_j \in Z_{G_j}(h_j)$ and $\bmf{g_j}\notin  Z_H(\bmf{h_j})\cap \bmf{G_j}.$ This implies $\bmf{g_j} \notin H.$ as $H$ is index $2$ subgroup of $G,$ the product $\bmf{g_i}\bmf{g_j}\in H$ and hence $\bmf{g_i}\bmf{g_j}\in Z_H(h).$ Therefore $k\bmf{g_i}\bmf{g_j}=\bmf{g_i}\bmf{g_j}k,$ and in particular we get $k_ig_i= g_ik_i,$ which contradicts our assumption.
 \end{enumerate}
\end{proof}

The Coxeter group $I_2(n)$ is isomorphic to the dihedral group of order $2n.$ The following theorem gives $z$-classes in dihedral groups.

\begin{theorem}\label{z-classes-Dihedral-group}
    A finite dihedral group $D_{2n}$ of size $2n$ for $n\geq 3$ has 3 $z$-classes if $n$ is odd, or if $n/2$ is odd; and 4 $z$ classes if $n = 4k$ for some positive integer $k$.
\end{theorem}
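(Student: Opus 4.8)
The plan is to fix the standard presentation $D_{2n}=\langle r,s\mid r^{n}=s^{2}=1,\ srs^{-1}=r^{-1}\rangle$, so that the rotations form the cyclic subgroup $R=\langle r\rangle$ of order $n$ and the reflections are the $n$ elements $r^{j}s$. The proof then proceeds in three stages: list every centralizer, sort these subgroups into conjugacy classes, and count.

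First I would compute the centralizers by direct calculation. The identity has centralizer $D_{2n}$, and when $n$ is even the rotation $r^{n/2}$ is the unique nontrivial central element, so its centralizer is also $D_{2n}$. A non-central rotation $r^{k}$ (such an element exists because $n\ge 3$) has centralizer exactly $R$. For a reflection $t=r^{j}s$ one checks that $r^{a}$ commutes with $t$ iff $2a\equiv 0\pmod n$ and $r^{a}s$ commutes with $t$ iff $2a\equiv 2j\pmod n$; hence $Z_{D_{2n}}(t)=\{1,t\}$ has order $2$ when $n$ is odd, while $Z_{D_{2n}}(t)=\{1,\,r^{n/2},\,r^{j}s,\,r^{j+n/2}s\}$ has order $4$ when $n$ is even. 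Write $V_{j}$ for this last subgroup, and note the identification $V_{j}=V_{j+n/2}$.

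Next I would sort these subgroups up to conjugacy. The group $D_{2n}$ is conjugate only to itself, and $R$ is normal, hence conjugate only to itself; comparing orders (and, in the single coincident case $n=4$, noting that $R\cong C_{4}$ is not isomorphic to the Klein four-group $V_{j}$) shows that no reflection centralizer is conjugate to $R$ or to $D_{2n}$, and $R$ is not conjugate to $D_{2n}$. This already produces two $z$-classes: that of the identity (joined by $r^{n/2}$ when $n$ is even) and that of the non-central rotations. It remains to count conjugacy classes among the reflection centralizers. For $n$ odd all reflections are conjugate, so all the order-$2$ subgroups $\{1,r^{j}s\}$ are conjugate, giving one $z$-class of reflections and $3$ in total. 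For $n$ even, a short computation yields $r^{a}V_{j}r^{-a}=V_{j+2a}$ and $(r^{a}s)V_{j}(r^{a}s)^{-1}=V_{2a-j}$, so the conjugates of $V_{j}$ are exactly the $V_{j'}$ with $j'\equiv j\pmod 2$, subject to the identification $V_{j}=V_{j+n/2}$.

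The delicate point, which I expect to be the only real obstacle, is the interplay between this parity and the identification $V_{j}=V_{j+n/2}$. If $n/2$ is odd, then $j$ and $j+n/2$ have opposite parity, so the subgroup $V_{0}$ can also be written $V_{n/2}$ with an odd index; consequently $V_{0}$ is conjugate to $V_{1}$ and there is a single $z$-class of reflections, for a total of $3$. If $n=4k$, then $j$ and $j+n/2$ have the same parity, the identification preserves parity, and the centralizers $V_{0}$ (of the reflections $r^{2i}s$) and $V_{1}$ (of the reflections $r^{2i+1}s$) are non-conjugate; together with the two $z$-classes already found this gives $4$. Assembling the three cases ($n$ odd, $n/2$ odd, $n=4k$) proves the theorem.
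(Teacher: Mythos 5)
Your proposal is correct and follows essentially the same route as the paper: compute every centralizer explicitly (the whole group for central elements, $\langle r\rangle$ for non-central rotations, and an order-$2$ or Klein four subgroup for reflections) and then sort these subgroups into conjugacy classes, with the case split driven by whether $n/2$ is odd or $n\equiv 0\pmod 4$. Your explicit formulas $r^{a}V_{j}r^{-a}=V_{j+2a}$ and $(r^{a}s)V_{j}(r^{a}s)^{-1}=V_{2a-j}$, together with the identification $V_{j}=V_{j+n/2}$, actually make the non-conjugacy of $Z(b)$ and $Z(ba)$ in the $n=4k$ case more transparent than the paper's rather terse justification, but this is a refinement of the same argument rather than a different approach.
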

\begin{proof}
Let $D_{2n} = \langle a, b: a^n = b^2 = e, bab = a^{-1}\rangle$.
    
If $n$ is odd, the conjugacy classes of $D_{2n}$ are $\{e\} = Z(Z_{D_{2n}})$, $\{a, a^{n-1}\}$, $\{a^2, a^{n-2}\}$,$\cdots$, $\{a^{\lfloor n/2\rfloor}, a^{\lceil n/2 \rceil}\}$,  and $\{b,ba,ba^2,\ldots,ba^{n-1}\}$. All the conjugacy classes $\{a^j, a^{n-j}\}$ have $\langle a \rangle$ as their centralizer. Thus $a,a^2,\ldots, a^{n-1}$ are all $z$-conjugate. And, all the $ba^s$, where $0 \leq s \leq n-1$, are conjugate, hence its centralizer is of size two, and conjugate to $\{e, b\}$. Thus, this conjugacy class is a $z$-class of its own. Hence we have 3 $z$-classes.

When $n$ is even: $Z(Z_{D_{2n}}) = \{e, a^{n/2}\}$. The conjugacy classes of $D_{2n}$ are:
\begin{itemize}
    \item $\{e\}, \{a^{n/2}\}$, which form the centre of $D_{2n}$
    \item  $\{a^j, a^{n-j}\}$, where $1 \leq j \leq n/2-1$. For each of these, the centralizer is $\langle a \rangle$
    \item $\{ba^{2s}: 0\leq s \leq n/2 -1\}$. The centralizer of $b$ is $\{e, b, a^{n/2}, ba^{n/2}\}$
    \item  $\{ba^{2s +1}: 0\leq s \leq n/2 -1\}$. The centralizer of $ba$ is $\{e, ba, a^{n/2}, ba{1 + n/2}\}$
\end{itemize}

When $n/2$ is odd, i.e, $n = 4k + 2$ for some $k$. The two central elements form a $z$-class. The centralizers of the classes $\{a^j, a^{n-j}\}$ for $1 \leq j \leq n/2 - 1$ are all $\langle a \rangle$. Now, $Z_{D_{2n}}(b) = \{e, b, a^{n/2}, ba^{n/2}\}$. As $n/2$ is odd, and $ba^{n/2}$ is not conjugate to $b$. However, $Z_{D_{2n}}(ba^{n/2}) = \{e, ba^{n/2}, a^{n/2}, b\} = Z_{D_{2n}}(b)$. Thus $b$ and $ba^{n/2}$ are $z$-conjugate. Hence there are 3 $z$-classes.

When $n = 4k$, we have $n/2 = 2k$. In this case, $Z_{D_{2n}}(b)= \{e, b, a^{n/2}, ba^{n/2}\}$. As $n/2 = 2k$, $ba^{n/2}$ is conjugate to $b$. Likewise $Z_{D_{2n}}(ba) = \{e, ba, a^{n/2}, ba^{n/2 + 1}\}$. As $ba$ and $ba^{n/2 + 1}$ are conjugate to each other, the centralizers of $b$ and $ba$ are not conjugate to each other. Thus, they form 2 separate $z$-classes. Hence there are 4 $z$-classes in $D_{2n}$, when $n = 4k$.
\end{proof}

We conclude this section with the following combinatorial lemma, which is required to prove theorem \ref{DMain2}.
\begin{lemma}\label{LemmaCount}
    Let $d_1,d_2,\ldots, d_p$ be some positive integers. For each $1 \leq i \leq p$, consider integers $0 \leq t_i \leq d_i - 1.$ The number of $(t_1, t_2, \ldots, t_p)$ such that $\sum_{i=1}^p t_i $ is even is $$\left\lceil\frac{\prod_{i=1}^p d_i}{2}\right\rceil.$$
    
\end{lemma}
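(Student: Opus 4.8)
The plan is to count the even-sum tuples by the standard sign-substitution trick rather than by a direct combinatorial argument. Write $N = \prod_{i=1}^p d_i$ for the total number of tuples $(t_1,\dots,t_p)$ with $0 \le t_i \le d_i-1$, let $E$ be the number of such tuples with $\sum_{i=1}^p t_i$ even, and $O$ the number with $\sum_{i=1}^p t_i$ odd. Trivially $E + O = N$. The key identity I would write down is
\[
E - O \;=\; \sum_{t_1=0}^{d_1-1}\cdots\sum_{t_p=0}^{d_p-1} (-1)^{t_1+\cdots+t_p} \;=\; \prod_{i=1}^p\left(\sum_{t_i=0}^{d_i-1}(-1)^{t_i}\right),
\]
since a tuple contributes $+1$ to the sum exactly when its coordinate sum is even and $-1$ otherwise.

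Next I would evaluate each factor: the alternating sum $\sum_{t=0}^{d-1}(-1)^t$ equals $1$ when $d$ is odd and $0$ when $d$ is even. Hence the product on the right is $1$ if every $d_i$ is odd, and $0$ if at least one $d_i$ is even. Correspondingly $N=\prod_i d_i$ is odd in the first case and even in the second. Solving the two-by-two system for $E$ in each case then finishes the proof: if all $d_i$ are odd, $E+O=N$ and $E-O=1$ give $E=(N+1)/2=\lceil N/2\rceil$ (valid since $N$ is odd); if some $d_i$ is even, $E+O=N$ and $E-O=0$ give $E=N/2=\lceil N/2\rceil$ (valid since $N$ is even). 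In either regime $E=\bigl\lceil \prod_{i=1}^p d_i / 2\bigr\rceil$, as claimed.

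There is essentially no serious obstacle here. The only point needing a little care is verifying that the solution $E=(N\pm c)/2$ of the linear system actually equals the ceiling $\lceil N/2\rceil$ in both parity regimes, which is precisely what the dichotomy ``all $d_i$ odd'' versus ``some $d_i$ even'' is arranged to control. An alternative route is induction on $p$ with base case $\lceil d_1/2\rceil$, but tracking how the even- and odd-sum counts recombine when a new coordinate is appended is messier than the one-line generating-function computation above, so I would present the latter.
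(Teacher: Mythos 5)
Your proof is correct, and it takes a genuinely different route from the paper's. The paper proceeds by induction on $p$: it first handles $p=1$ by counting even integers in $\{0,\dots,d_1-1\}$, then, when appending a coordinate $t_{p+1}$, splits into the cases ``$\prod_i d_i$ even'' and ``$\prod_i d_i$ odd'' and, within each, into ``$t_{p+1}$ even'' and ``$t_{p+1}$ odd,'' combining the resulting counts of even-sum and odd-sum $p$-tuples to recover the ceiling formula at level $p+1$. Your sign-substitution argument replaces all of this bookkeeping with the single factorization
\[
E-O=\prod_{i=1}^p\Bigl(\sum_{t=0}^{d_i-1}(-1)^t\Bigr),
\]
which collapses to $1$ or $0$ according to whether all $d_i$ are odd, after which $E$ is read off from the linear system $E+O=N$, $E-O\in\{0,1\}$; the parity dichotomy that makes $E=\lceil N/2\rceil$ in both regimes is exactly the one you identify. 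What your approach buys is brevity and transparency --- the ceiling appears for a structural reason (it absorbs the $+1$ correction exactly when $N$ is odd) rather than emerging from an inductive recombination; what the paper's induction buys is that it stays entirely within elementary counting, at the cost of a longer case analysis. Both are complete proofs of the same statement.
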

\begin{proof}
    We prove this lemma by induction on $p$. When $p = 1$: Suppose $d_1$ is even, then $d_1 - 1$ is odd, and there are exactly $\displaystyle\frac{d_1}{2} = \left\lceil\frac{d_1}{2}\right\rceil$ even numbers between $0$ and $d_1 - 1$. When $m_1 $ is odd, then $d_1 -1$ is even. And there are $\displaystyle\frac{d_1 + 1}{2} = \left\lceil\frac{d_1}{2}\right\rceil$ even numbers between $0$ and $d_1 - 1$.

    Assume induction upto $p$. Given $d_1, d_2, \ldots, d_{p+1}$. Suppose $d_1d_2\cdots d_{p+1}$ is even. We may assume that $d_{p+1}$ is even. Let $(t_1,\ldots,t_{p+1})$ be a $p+1$-tuple such that their sum is even. If $t_{p+1}$ is even, then $\sum_{i=1}^p t_i$ is even. By induction, we know that there are $\left\lceil\frac{\prod_{i=1}^p d_i}{2}\right\rceil$ such $p$-tuples. As there are $\displaystyle\frac{d_{p+1}}{2}$ choices for $t_{p+1},$ we have $\displaystyle\left\lceil\frac{\prod_{i=1}^p d_i}{2}\right\rceil\frac{d_{p+1}}{2}$ such $(t_1,t_2,\ldots,t_{p+1})$ whose sum is even. 

    If $t_{p+1}$ is odd, then $\sum_{i=1}^p t_i$ should also be odd. There are $\displaystyle\left(d_1\cdots d_p - \left\lceil\frac{\prod_{i=1}^p d_i}{2}\right\rceil\right)\frac{d_{p+1}}{2} $ choices for $(t_1,\ldots,t_{p+1})$ whose sum is even. 

    Thus, the number of $(t_1,t_2,\ldots,t_{p+1})$ such that $\sum_{i=1}^{p+1}t_i$ is even is \begin{eqnarray*}
      &~&  \left\lceil\frac{\prod_{i=1}^p d_i}{2}\right\rceil\frac{d_{p+1}}{2} + \left(d_1\cdots d_p - \left\lceil\frac{\prod_{i=1}^p d_i}{2}\right\rceil\right)\frac{d_{p+1}}{2}\\
      &=& \frac{\prod_{i=1}^{p+1} d_i}{2}\\
      &=& \left\lceil\frac{\prod_{i=1}^{p+1} d_i}{2}\right\rceil
    \end{eqnarray*} 
    Suppose $d_1d_2\cdots d_{p+1}$ is odd, then all the $d_i$'s are odd. Suppose $(t_1,\ldots, t_{p+1})$ is such that $\sum_{i=1}^{p+1}t_i$ is even.

    If $t_{p+1}$ is even, then the number of $(t_1,\ldots, t_{p+1})$ such that their sum is even; is \begin{equation}\label{Ceil1}\displaystyle\Ceil{\frac{\prod_{i=1}^pd_i}{2}}\Ceil{\frac{d_{p+1}}{2}} = \left(\frac{\left(\prod_{i=1}^p d_i\right) + 1}{2}\right)\left(\frac{d_{p+1} + 1}{2}\right)\end{equation}

    When $t_{p+1}$ is odd, then the number of $(t_1,\ldots, t_{p+1})$ such that their sum is even; is \begin{equation}\label{Ceil2}\left(d_1\cdots d_{p+1} - \displaystyle\Ceil{\frac{\prod_{i=1}^pd_i}{2}}\right)\left(d_{p+1} - \Ceil{\frac{d_{p+1}}{2}}\right) = \left(\frac{\left(\prod_{i=1}^p d_i\right) - 1}{2}\right)\left(\frac{d_{p+1} - 1}{2}\right)\end{equation}

    Thus upon adding \ref{Ceil1} and \ref{Ceil2}, we get that the total number of $(t_1,\ldots, t_{p+1})$ such that their sum is even is $$\left(\frac{d_1\cdots d_pd_{p+1 + 1}}{2}\right) = \Ceil{\frac{\prod_{i=1}^{p+1}d_i}{2}}.$$
    Thus, our lemma holds true for $p+1$ as well. So, the lemma is true for all $p$.

\end{proof}
\section{$z$-classes in $C_n$ and $B_n$}\label{$z$-Classes of $C_n$}
The Coxeter groups of $B_n$ and $C_n$ are isomorphic, and they are isomorphic to the wreath product of cyclic group $C_2$ of order $2$ with symmetric group $S_n$ of degree $n.$ We denote this group by $C_2 \wr S_n.$ In order to understand the $z$-conjugcay classes of $C_2 \wr S_n,$ we first study its conjugacy classes.
\subsection{Conjugacy classes in $C_2 \wr S_n$}\label{ConjugacyclassesWCn}
We begin this section by defining wreath product $C_2 \wr S_n.$
\begin{definition}
Let $n$ be a positive integer. Let $(C_2)^n$ denotes the direct product of $n$-copies of cyclic group of order $2.$ The symmetric group $S_n$ acts on $(C_2)^n$ in the following way: For $(a_1,\ldots, a_n)\in (C_2)^n$, and $\sigma \in S_n$:
$$\sigma. (a_1,\ldots, a_n) = (a_{\sigma^{-1}(1)},\ldots, a_{\sigma^{-1}(n)}).$$ Then the semidirect product of $(C_2)^n$ with $S_n$, with respect to the above action, is called {\emph wreath product} of $C_2$ and $S_n$.
\end{definition}

We write an element of $C_2 \wr S_n$ as $[a_1,\ldots,a_n;\sigma]$, where $a_1,\ldots, a_n \in C_2$, and $\sigma \in S_n$. Here, we take $C_2 = \{-1,1\}$. 

Let $\bmf{a}_\sigma = [a_1,\ldots, a_n; \sigma], \bmf{b}_\pi= [b_1, \ldots, b_n; \pi] \in C_2\wr{S_n}$, then the product $$\bmf{a}_\sigma\bmf{b}_\pi=[a_1b_{\sigma^{-1}(1)},\ldots, a_nb_{\sigma^{-1}(n)}; \sigma.\pi].$$
And the inverse of $\bmf{a}$ is $\bmf{a}_\sigma^{-1} = [a^{-1}_{\sigma(1)}, \ldots, a^{-1}_{\sigma(n)}; \sigma^{-1}]$.

Now, we discuss the conjugacy classes of the group $C_2 \wr S_n.$ It is well known fact that conjugacy classes of $S_n$ can be enumerated with partitions of $n.$ We recall the definition of partition of a positive integer. 

\begin{definition}
    A partition of a positive integer $n$ is a sequence of positive integers $\lambda_1 \geq \lambda_2 \geq \dots \geq  \lambda_l >0$ such that $\sum_{i=1}^{l} \lambda_i =n.$ The $\lambda_i$'s are called the parts of the partition. We write this partition of $n$ by $\Lbd = (\lbd_1, \lbd_2,\ldots, \lbd_l).$ 
\end{definition}
Let $\Lbd=(\lbd_1,\lbd_2,…,\lbd_l)$ be a partition of n, then the representative of conjugacy class of $S_n$ enumerated by this partition $\Lbd,$ is $\sigma\in S_n,$ which is product of $l$ cycles, where the length of the $i^{th}$ cycle is $\lbd_i.$ In this case, we say that the cycle type of $\sigma$ is $\Lbd.$

Further, the conjugacy classes of the group $
C_2 \wr S_n$ can be enumerated by signed partitions of $n$ (see the proof of Proposition 24~\cite{Ca}). We will be heavily using this enumeration of conjugacy classes of the group $C_2 \wr S_n$ in the later sections of this article.
Now we define the signed partitions.

\begin{definition}
Let $\Lbd=(\lbd_1,\lbd_2,…,\lbd_l)$ be a partition of $n$. A signed partition $\overline{\Lbd}$ of $n$, of type $\Lbd$, is a partition in which every part $\lbd_i$ can come in $2$ different signs denoted by $\lbd_i$ and $\bar{\lbd_i}.$   
\end{definition}
For example, there are five signed partitions of $2:$
$$11, 1\bar{1},\bar{1}\bar{1},2, \bar{2}  $$
Here $11$, $1\bar{1}$, and $\bar{1}\bar{1}$ are signed partitions of type $11$ whereas $2, \bar{2}$ are signed partitions of type $2.$

To understand the enumeration of conjugacy class of $C_2 \wr S_n$ by signed partitions of $n$, we need to first define the cycle products.

\begin{defn}
Let $\Lbd = (\lbd_1, \lbd_2,\ldots, \lbd_l)$ be a partition of $n.$
Let $\sigma \in S_n$ be a representative of conjugacy class enumerated by partition $\Lbd.$ Let $$\sigma = (i_1,\ldots, i_{\lbd_1})\cdots(k_1,\ldots, k_{\lbd_l}),$$ and let $\bmf{a}_\sigma = [a_1, \ldots, a_n; \sigma]$. Then the {\emph cycle products} of $\bmf{a}_\sigma$ are the products $\prod_{j=1}^{\lbd_1}a_{i_j}$, $\ldots$, $\prod_{j=1}^{\lbd_l}a_{k_i}$. 
\end{defn}
From the above definition, it is clear that there is a cycle product defined for each cycle of $\sigma.$ For example $\Pi_{i=1}^{\lbd_1}a_{i_j}$ is cycle product corresponding to cycle $(i_1,\ldots, i_{\lbd_1})$  of length $\lbd_1.$
For any element $\bmf{a}_\sigma \in C_2 \wr S_n$, the cycle products in $\bmf{a}_\sigma$ are either $-1$ or $1.$ 

The following proposition helps in understanding the conjugacy classes in $C_2 \wr S_n.$

\begin{proposition}[Section 2, \cite{MS}]\label{Conjugacy_classes_C2wrSn}
Let ${\bf a} = [a_1,\ldots, a_n; \sigma]$, and ${\bf b}=[b_1,\ldots, b_n; \pi]$. Then ${\bf a}$ is conjugate in $C_2 \wr S_n$ to $\bmf{b}$ if and only if $\sigma$ is conjugate to $\pi$, and if $\tau \in S_n$ so that $\tau \sigma \tau^{-1} = \pi$, then for each cycle $(i_1, \ldots, i_{\lbd_j})$ in $\sigma$, where $\Lbd = (\lbd_1, \ldots, \lbd_l)$ is the cycle type of $\sigma$, the cycle product $a_{i_1}\cdots a_{i_{\lbd_j}}$ in $\bmf{a}$ is conjugate to the cycle product $b_{\tau(i_1)}\cdots b_{\tau(i_{\lbd_j})}$ in $\bmf{b}$. 
\end{proposition}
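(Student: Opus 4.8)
The plan is to reduce an arbitrary conjugation in $C_2\wr S_n$ to a composition of two elementary moves and to track the cycle products through each. Two preliminary observations set things up. First, since $C_2=\{-1,1\}$ is abelian, ``conjugate in $C_2$'' just means ``equal'', so the assertion to be proved is that the cycle products of ${\bf a}$, transported by $\tau$, must agree with those of ${\bf b}$. Second, the multiplication formula gives the factorization $[c_1,\ldots,c_n;\tau]=[c_1,\ldots,c_n;e]\,[1,\ldots,1;\tau]$, so conjugation by a general element of $C_2\wr S_n$ is conjugation by a ``pure translation'' $[1,\ldots,1;\tau]$ followed by conjugation by a ``diagonal'' element $[c_1,\ldots,c_n;e]$.

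Next I would compute the two moves explicitly from the product and inverse formulas. For the pure translation one gets
\[
[1,\ldots,1;\tau]\,[a_1,\ldots,a_n;\sigma]\,[1,\ldots,1;\tau]^{-1}=[a_{\tau^{-1}(1)},\ldots,a_{\tau^{-1}(n)};\tau\sigma\tau^{-1}],
\]
which carries the cycle $(i_1,\ldots,i_{\lbd_j})$ of $\sigma$ to the cycle $(\tau(i_1),\ldots,\tau(i_{\lbd_j}))$ of $\tau\sigma\tau^{-1}$ and leaves the corresponding cycle product $a_{i_1}\cdots a_{i_{\lbd_j}}$ unchanged. For the diagonal move a similar computation gives
\[
[c_1,\ldots,c_n;e]\,[a_1,\ldots,a_n;\sigma]\,[c_1,\ldots,c_n;e]^{-1}=[a'_1,\ldots,a'_n;\sigma],\qquad a'_i=c_i\,a_i\,c_{\sigma^{-1}(i)} ;
\]
along any cycle each factor $c_i$ occurs exactly twice, so by commutativity and $c_i^2=1$ every cycle product is preserved. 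The essential quantitative point is the converse: if the prescribed values $a'_i$ have the same cycle products as the $a_i$, the system $c_i c_{\sigma^{-1}(i)}=a'_i a_i$ is solvable for the $c_i$; this is a one-line $\mathbb{F}_2$-linear-algebra argument carried out one cycle at a time, in which the sum of the equations around a cycle is precisely the condition that the two cycle products of that cycle coincide. Hence, for a fixed $\sigma$, the orbits of diagonal conjugation on $\{[a_1,\ldots,a_n;\sigma]\}$ are exactly the level sets of the tuple of cycle products.

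Finally I would assemble the two directions. For necessity: if ${\bf b}=g{\bf a}g^{-1}$, comparing $S_n$-components forces $\pi=\tau\sigma\tau^{-1}$ where $\tau$ is the permutation part of $g$ (so $\sigma\sim\pi$), and splitting $g$ as above shows the cycle products of ${\bf b}$ are those of ${\bf a}$ transported by $\tau$. For sufficiency: given $\tau$ with $\tau\sigma\tau^{-1}=\pi$ under which the cycle products match, first conjugate ${\bf a}$ by $[1,\ldots,1;\tau]$ to obtain an element with $S_n$-component $\pi$ and the same cycle products as ${\bf b}$, then apply the surjectivity half of the diagonal analysis to finish with a diagonal conjugation. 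The step I expect to be the main obstacle is exactly the diagonal move — proving that the cycle-product invariants are \emph{complete}, i.e. solving $c_i c_{\sigma^{-1}(i)}=a'_i a_i$; one should also add a short bookkeeping remark to interpret ``if $\tau$ $\ldots$ then $\ldots$'' correctly when $\sigma$ has several cycles of equal length, since different valid choices of $\tau$ differ by the centraliser of $\sigma$, which may permute equal-length cycles (and rotate within a cycle, which is harmless for cycle products).
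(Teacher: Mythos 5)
Your proposal is correct, but note that the paper does not prove this proposition at all: it is quoted from Section~2 of the cited reference [MS] and used as a black box, so there is no in-paper argument to compare against. Your proof supplies the standard argument and it goes through: the factorization $[c_1,\ldots,c_n;\tau]=[c_1,\ldots,c_n;e][1,\ldots,1;\tau]$ is legitimate, the two conjugation computations agree with the product and inverse formulas stated in the paper, and the key step --- completeness of the cycle-product invariants --- is exactly the $\mathbb{F}_2$ consistency check you describe: summing $c_i c_{\sigma^{-1}(i)}=a'_ia_i$ around a cycle makes the left side telescope to $1$, so the system is solvable precisely when the two cycle products of that cycle agree, and a solution is then obtained by fixing one $c_{i_1}$ and propagating. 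Your closing remark about the quantifier on $\tau$ is not just bookkeeping but a genuine correction to the statement as printed: read with ``for all $\tau$ conjugating $\sigma$ to $\pi$'' the proposition is false (take $\sigma=\pi=e$ in $C_2\wr S_2$, ${\bf a}=[1,-1;e]$, ${\bf b}=[-1,1;e]$, $\tau=e$), and the correct reading --- which is what your sufficiency argument actually uses, and what the paper's subsequent identification of conjugacy classes with signed partitions implicitly relies on --- is that \emph{some} such $\tau$ matches the cycle products. It would be worth making that existential quantifier explicit if this proof were to be included.
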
 
The following proposition gives the correspondence between conjugacy classes of the group $C_2 \wr S_n$ and the signed partitions of $n.$
In the case of $C_2\wr S_n$, the cycle products are either 1 or -1. From the above proposition we see that two elements in $C_2\wr S_n$ are conjugate if and only if they have the same signed cycle type. Thus the conjugacy classes in $C_2 \wr S_n$ are enumerated with the signed partitions $\bar{\Lbd}$ of $n.$ 

\begin{proposition}
Let $\Lbd = (\lbd_1, \ldots, \lbd_l)$ be a partition of $n$. Let $\bmf{a} = [a_1, \ldots, a_n; \sigma]$ be an element of $C_2 \wr S_n$ with $\sigma$ being of cycle type $\Lbd$. Then the signed partition $\overline{\Lbd}$ corresponding to $\bmf{a}$ is written as follows:
 Let $(i_1, \ldots i_{\lbd_j})$ be cycle of length $\lbd_j$ in $\sigma,$ if the cycle product associated with cycle  $(i_1, \ldots i_{\lbd_j})$ is $-1, $ 
 then $\lbd_j$ in $\Lbd$ will be replaced by 
 $\overline{\lbd_j}$ in $\overline{\Lbd},$ otherwise 
 $\lbd_j$ in $\Lbd$ is same as $\lbd_j$ in 
 $\overline{\Lbd}. $ 
 \end{proposition}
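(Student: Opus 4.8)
The plan is to treat this as a bookkeeping statement sitting on top of Proposition~\ref{Conjugacy_classes_C2wrSn}: the recipe described really does attach a well-defined signed partition $\overline{\Lbd}$ to each element $\bmf{a}=[a_1,\dots,a_n;\sigma]$, this assignment is constant on conjugacy classes, and it induces a bijection between conjugacy classes of $C_2\wr S_n$ and signed partitions of $n$. So first I would check that the recipe is unambiguous. A cycle of $\sigma$ of length $\lbd_j$, say $(i_1,\dots,i_{\lbd_j})$, can be written in $\lbd_j$ different ways by choosing a starting index, but the associated cycle product $a_{i_1}\cdots a_{i_{\lbd_j}}$ is a product in the abelian group $C_2$, hence invariant under cyclic rotation; and the disjoint cycle decomposition of $\sigma$ is unique up to the order of the cycles, which is irrelevant since $\overline{\Lbd}$ records only the multiset of signed parts. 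Hence each $\bmf{a}$ determines $\overline{\Lbd}$ unambiguously, with the $j$-th part barred exactly when the corresponding cycle product equals $-1$.

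Next I would show that $\bmf{a}\mapsto\overline{\Lbd}$ is a class invariant, using Proposition~\ref{Conjugacy_classes_C2wrSn}. If $\bmf{b}=[b_1,\dots,b_n;\pi]$ is conjugate to $\bmf{a}$, there is $\tau\in S_n$ with $\tau\sigma\tau^{-1}=\pi$ such that, for each cycle $(i_1,\dots,i_{\lbd_j})$ of $\sigma$, the cycle product $a_{i_1}\cdots a_{i_{\lbd_j}}$ is conjugate to $b_{\tau(i_1)}\cdots b_{\tau(i_{\lbd_j})}$; since both live in the abelian group $C_2$, ``conjugate'' means ``equal''. As $\tau$ carries the cycles of $\sigma$ bijectively onto the cycles of $\pi$, this says precisely that $\bmf{a}$ and $\bmf{b}$ have the same signed cycle type, i.e.\ the same $\overline{\Lbd}$. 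Conversely, the ``if'' direction of the same proposition shows that any two elements with the same signed cycle type are conjugate: choose $\tau$ realizing the conjugacy of the underlying permutations and matching up equal cycle products cycle by cycle, which is possible exactly because the signed types agree.

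Finally I would verify surjectivity onto the set of signed partitions of $n$: given $\overline{\Lbd}$ of type $\Lbd=(\lbd_1,\dots,\lbd_l)$, pick $\sigma\in S_n$ of cycle type $\Lbd$ with disjoint cycles $C_1,\dots,C_l$, and for each $j$ set $a_i=-1$ for a single index $i\in C_j$ when the $j$-th part of $\overline{\Lbd}$ is barred and $a_i=1$ for all remaining indices; then $\bmf{a}=[a_1,\dots,a_n;\sigma]$ has signed cycle type $\overline{\Lbd}$. Together with the previous paragraphs this gives the asserted correspondence between conjugacy classes and signed partitions, with the stated description of $\overline{\Lbd}$. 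I do not expect a genuine obstacle here; the only point that needs a moment's care is the well-definedness of a single cycle product under re-choosing the starting index of its cycle, and this is immediate from commutativity of $C_2$. All the substantive content --- that the signed cycle type is a complete conjugacy invariant --- has already been extracted in Proposition~\ref{Conjugacy_classes_C2wrSn}.
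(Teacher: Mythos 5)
Your argument is correct and follows essentially the same route as the paper, which gives no formal proof at all: it treats the statement as an immediate consequence of Proposition~\ref{Conjugacy_classes_C2wrSn} together with the observation that cycle products lie in the abelian group $C_2$, so conjugacy of cycle products reduces to equality. Your write-up simply fills in the routine details (well-definedness under cyclic rotation, class-invariance, and surjectivity onto signed partitions) along exactly those lines.
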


\begin{example}
Let  $G=C_2 \wr S_6$. Consider an element $$\bmf{x} = [1,-1,-1,1,1,-1, (145)(26)]\in G$$. The $\sigma$ here is $(145)(26)$, which is of cycle type $\Lbd = 1^12^13^1$. The cycle products are $x_1x_4x_5= 1.1.1 =1$, $x_2x_6 = -1.-1 = 1$, and $x_3 = -1$. Thus the signed permutation for $\bmf{x}$ is $\overline{1}^12^13^1$. 
\end{example}

In the following examples we list the conjugacy classes for $C_2 \wr S_2$ and $C_2 
\wr S_3$, with their representatives, and centralizer sizes.
\begin{example}[$C_2 \wr S_2$]
The conjugacy classes of $C_2 \wr S_2$ are as listed below:
\begin{center}
\begin{tabular}{|c|c|c|}\hline
Class & Representative & Centralizer Size \\ \hline
$1^2$ & $[1,1; (1)]$ & $8$\\ \hline
$\overline{1}1$ & $[-1,1; (1)]$ & $4$\\ \hline
$\overline{1}^2$ & $[-1,-1; (1)]$ & $8$ \\ \hline
$2$ & $[1,1; (12)]$ & $4$ \\ \hline
$\overline{2}$ & $[1,-1;(12)]$ & $4$ \\\hline
\end{tabular}
\end{center}
\end{example}
\begin{example}[$C_2 \wr S_3$]
The conjugacy classes of $C_2 \wr S_3$ are as listed below:
\begin{center}
\begin{tabular}{|c|c|c||c|c|c|}\hline
Class & Representative & $\begin{smallmatrix}\text{Centralizer}\\ \text{Size}
\end{smallmatrix}$ & Class & Representative & $\begin{smallmatrix}\text{Centralizer}\\ \text{Size}
\end{smallmatrix}$ \\ \hline
$1^3$ & $[1,1,1; (1)]$ & $48$ & $2^1\overline{1}^1$ & $[1,1,-1;(12)]$& $8$ \\ \hline
$\overline{1}1^2$ & $[-1,1,1; (1)]$ & $16$ & $\overline{2}^11^1$ &$[1,-1,1;(12)]$ & $8$\\ \hline
$\overline{1}^21^1$ & $[1,-1,-1; (1)]$ & $16$ & $\overline{2}^1\overline{1}^1$ & $[1,-1,-1;(12)]$ & $8$ \\ \hline
$\overline{1}^3$ & $[-1,-1,-1; (1)]$ & $48$ & $3^1$ & $[1,1,1;(123)]$ & $6$\\ \hline
$2^11^1$ & $[1,1,1;(12)]$ & $8$  & $\overline{3}^1$ & $[-1,-1,-1; (123)]$ & $6$\\\hline
\end{tabular}
\end{center}
\end{example}

Now, we define the terminology that we will use throughout this article. 

Let $\Lbd = (\lbd_1, \ldots, \lbd_l)$ be a partition of $n$. If $\lambda_i$ is an even number then it is called even part and if $\lambda_i$ is an odd number then it is called odd part. 

Let $\overline{\Lbd}$ be signed partition of type $\Lbd.$
The part of signed partition $\lambda_i$ without bar is called the part with positive sign, or sometimes \emph{positive part}, and the part $\bar{\lambda_i}$ is called the part with negative sign, or sometimes \emph{negative part}.

Let $\sigma\in S_n$ be of cycle type $\Lbd,$ then we shall sometimes write $Z_{S_n}(\Lbd)$ for  $Z_{S_n}(\sigma).$ Similarly, we shall write  $Z_{C_2 \wr S_n}(\overline{\Lbd})$
for $Z_{C_2 \wr S_n}({\bf a}), $ where $\bf a\in C_2 \wr S_n$ is representative of conjugacy class corresponding to signed partition $\overline{\Lbd}$ of $n.$

\subsection{Centralizers in $C_2 \wr S_n$}
For the sake of convenience, we will write ${\bf h}_k$ for $k$ copies of $h,$ for example the element $[{\bf h_1}_k,{\bf h_2}_k,\ldots, {\bf h_l}_k;\tau] \in C_2 \wr S_n$ contains $k$ copies for $h_i$ for all $1\leq i\leq l.$ We denote the following element of $S_n$ containing $l$ cycles of length $k$ by ${\bf \sigma}_{k.l},$ where $n=kl.$
$${\bf \sigma}_{k.l} = (1,2,\ldots, k)(k+1,k+2, \ldots, 2k)\cdots (n-k+1, n-k+2,\ldots, n).$$ 
\begin{proposition}\label{prop_kl}
Let $n,k \in \mathbb N,$ such that $k|n.$  Let $n = kl$. Then the centralizer of the conjugacy class corresponding to signed partition $k^l$ in $C_2 \wr S_n$ is conjugate to 
$$ \{[{\bf h_1}_k,{\bf h_2}_k,\ldots, {\bf h_l}_k;\tau]\mid h_i = \pm1, \tau\in Z_{S_n}({\bf \sigma}_{k.l}) \},$$ 
Moreover, this centralizer is isomorphic to the group $((C_2\times C_k)\wr S_l).$
\end{proposition}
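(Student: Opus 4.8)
The plan is to compute the centralizer of a concrete representative and then identify its isomorphism type. I would pick the representative $\bmf{a} = [1,1,\dots,1;\sigma_{k.l}]$ of the conjugacy class corresponding to the signed partition $k^l$, i.e.\ all cycle products equal to $1$ and the underlying permutation equal to the standard element $\sigma_{k.l}$ with $l$ cycles of length $k$. (By Proposition~\ref{Conjugacy_classes_C2wrSn} any representative of this class is conjugate to this one, so it suffices to work with it.) Then I would take an arbitrary $\bmf{g} = [g_1,\dots,g_n;\tau] \in C_2 \wr S_n$ and write down the condition $\bmf{g}\bmf{a} = \bmf{a}\bmf{g}$ using the multiplication rule $\bmf{a}_\sigma\bmf{b}_\pi=[a_1b_{\sigma^{-1}(1)},\dots,a_nb_{\sigma^{-1}(n)};\sigma\pi]$. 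Comparing permutation parts forces $\tau \in Z_{S_n}(\sigma_{k.l})$, and comparing the $C_2^n$-parts (since all $a_i = 1$) gives the condition $g_i = g_{\sigma_{k.l}^{-1}(i)}$ for all $i$, i.e.\ the sign-vector $(g_1,\dots,g_n)$ must be constant on each of the $l$ cycles of $\sigma_{k.l}$. This yields exactly the set displayed in the statement: $\{[\bmf{h_1}_k,\dots,\bmf{h_l}_k;\tau] \mid h_i = \pm 1,\ \tau \in Z_{S_n}(\sigma_{k.l})\}$.

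For the isomorphism claim, I would first recall the standard description $Z_{S_n}(\sigma_{k.l}) \cong C_k \wr S_l$: it is generated by the $l$ cyclic groups $\langle c_1\rangle,\dots,\langle c_l\rangle$ (where $c_j$ is the $j$-th $k$-cycle) together with permutations of $\{1,\dots,l\}$ that shuffle the blocks. Inside $C_2\wr S_n$ these block-shuffles also permute the corresponding sign-entries, but since the sign-vector is required to be constant on each block this is compatible. Concretely, the map should send $[\bmf{h_1}_k,\dots,\bmf{h_l}_k;\tau]$, with $\tau$ decomposed in $Z_{S_n}(\sigma_{k.l}) \cong C_k \wr S_l$ as (powers $e_1,\dots,e_l$ of $c_1,\dots,c_l$; base permutation $\rho\in S_l$), to the element $((h_1,c^{e_1}),\dots,(h_l,c^{e_l});\rho) \in (C_2\times C_k)\wr S_l$, where $c$ is a generator of $C_k$. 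I would then check this is a bijective homomorphism: injectivity and surjectivity are clear from the parametrisations (both sides have order $2^l k^l l!$), and multiplicativity follows by matching the semidirect-product twists — the $S_l$-action permutes both the $C_2$-coordinates and the $C_k$-coordinates in the same way on both sides, because within each $k$-cycle the single power $c^{e_j}$ already captures how the sign-entries get cyclically reindexed (they are constant, so no extra twist appears).

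The main obstacle I expect is not any single hard step but bookkeeping: cleanly translating the ``$g_i$ constant on cycles'' condition through the isomorphism $Z_{S_n}(\sigma_{k.l})\cong C_k\wr S_l$ and verifying that the $S_l$-twist on the $C_2$-part matches the $S_l$-twist on the $C_k$-part under the proposed map. To keep this manageable I would factor the verification: first treat the $l=1$ case (showing the centralizer of $k^1$ in $C_2\wr S_k$ is $\langle \sigma_{k.1}\rangle \times \langle [1,\dots,1;\sigma_{k.1}]\rangle$-type data, giving $C_2\times C_k$), then observe that the full centralizer is obtained by letting $S_l$ act on $l$ independent copies of this, which is by definition the wreath product $(C_2\times C_k)\wr S_l$; the only thing to check is that an $S_l$-block-swap normalises the product of the $l$ copies, which is immediate since swapping two blocks with constant (hence preserved) sign-data and conjugating their $k$-cycles is exactly the $S_l$-action on $(C_2\times C_k)^l$.
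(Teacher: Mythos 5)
Your proposal is correct and follows essentially the same route as the paper: choose the representative $[1,\dots,1;\sigma_{k.l}]$, derive $\tau\in Z_{S_n}(\sigma_{k.l})$ and the condition that the sign-vector is constant on each $k$-cycle, and then identify the resulting group with $(C_2\times C_k)\wr S_l$. The only difference is that you spell out the isomorphism (via $Z_{S_n}(\sigma_{k.l})\cong C_k\wr S_l$ and an order count $2^l k^l l!$) where the paper simply asserts it is easy to see.
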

\begin{proof}
We choose ${\bf a} =[1,1,\ldots, 1 ; {\bf \sigma}_{k.l}]$ as representative of conjugacy class  corresponding to signed partition $k^l$ in $C_2 \wr S_n.$ An element $[h_1, h_2, \ldots, h_n ; \tau]$ commutes with ${\bf a}$ if and only if $\tau \in Z_{S_n}({\bf \sigma}_{k.l})$ and $h_i=h_{{\bf \sigma}_{k.l}}^{-1}(i)$ for all $1\leq i\leq n.$ This implies 
$$Z_{C_2 \wr S_n}({\bf a})= \{[{\bf h_1}_k,{\bf h_2}_k,\ldots, {\bf h_l}_k;\tau]\mid h_i = \pm1, \tau\in Z_{S_n}({\bf \sigma}_{k.l}) \}.$$ It is easy to see now the group $Z_{C_2 \wr S_n}({\bf a})$ is isomorphic to 
$((C_2\times C_k)\wr S_l).$

\end{proof}

\begin{proposition}\label{Prop_overline_kl}
Let $n,k \in \mathbb N,$ such that $k|n.$  Let $n = kl$. 
\begin{enumerate}
    \item If $k$ is odd, then the centralizer of the conjugacy classes corresponding to signed partition $\overline{k}^l$ in $C_2 \wr S_n$ is conjugate to 
$$ \{[{\bf h_1}_k,{\bf h_2}_k,\ldots, {\bf h_l}_k;\tau]\mid h_i = \pm1, \tau\in Z_{S_n}({\bf \sigma}_{k.l}) \},$$ 
Moreover, this centralizer is isomorphic to the group $((C_2\times C_k)\wr S_l).$
  \item If $k$ is even, then the centralizer of the conjugacy classes corresponding to signed partition $\overline{k}^l$ in $C_2 \wr S_n$ is conjugate to 
  $$\left\{ [-{\bf h_1}_{m_1}, {\bf h_1}_{k-m_1},\ldots,-{\bf h_1}_{m_l}, {\bf h_1}_{k-m_l}; \tau]\mid\begin{matrix}  h_i \in C_2, \\ \tau =[\pi_k^{m_1},\ldots,\pi_k^{m_l};\rho] \in Z_{S_n}({\bf \sigma}_{k.l})\end{matrix}
\right\},$$ where $\pi_k=(1,2,\dots,k).$
Moreover, this centralizer is isomorphic to the wreath product $C_{2k}\wr S_l$.
\end{enumerate}

\end{proposition}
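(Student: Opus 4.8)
For part (1) the plan is to exploit that the element $z=[-1,\dots,-1;\mathrm{id}]$ is central in $C_2\wr S_n$. Take the representative $\mathbf a=[1,\dots,1;\sigma_{k.l}]$ of the class $k^l$ used in Proposition~\ref{prop_kl}. Then $\mathbf a z=[-1,\dots,-1;\sigma_{k.l}]$ has every cycle product equal to $(-1)^k=-1$ since $k$ is odd, so $\mathbf a z$ represents the class $\overline{k}^l$; and because $z$ is central, $Z_{C_2\wr S_n}(\mathbf a z)=Z_{C_2\wr S_n}(\mathbf a)$. Both the displayed description and the isomorphism $\cong (C_2\times C_k)\wr S_l$ then follow immediately from Proposition~\ref{prop_kl}. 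This shortcut is unavailable for even $k$, since then $(-1)^k=+1$ and $\mathbf a z$ again represents $k^l$, so part (2) requires a direct argument.

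For part (2) I would fix the representative $\mathbf b=[v;\sigma_{k.l}]$ with $v_{(a-1)k+1}=-1$ for $1\le a\le l$ and $v_j=1$ otherwise. From the multiplication rule in $C_2\wr S_n$, an element $\mathbf g=[g;\tau]$ centralizes $\mathbf b$ if and only if $\tau\in Z_{S_n}(\sigma_{k.l})$ and $g_i\,v_{\tau^{-1}(i)}=v_i\,g_{\sigma_{k.l}^{-1}(i)}$ for all $i$. Writing an element of $Z_{S_n}(\sigma_{k.l})\cong C_k\wr S_l$ as $\tau=[\pi_k^{m_1},\dots,\pi_k^{m_l};\rho]$ with $\rho\in S_l$ and $0\le m_a\le k-1$, I would solve the sign equation block by block: on each block it reduces to a telescoping recursion of the shape $g_i=v_i\,v_{\tau^{-1}(i)}\,g_{i-1}$, whose closure condition around the $k$-cycle is automatically satisfied because each block of $v$ contains exactly one $-1$ (so the relevant product of $v$-entries is a square in $C_2$). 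Solving it forces the sign vector on the $a$-th block to be $(\underbrace{-h_a,\dots,-h_a}_{m_a},\underbrace{h_a,\dots,h_a}_{k-m_a})$, with $h_a\in C_2$ the unique free parameter, which is exactly the displayed set; the count $l!\cdot k^l\cdot 2^l=(2k)^l\,l!$ is a consistency check.

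To identify the isomorphism type, I would note that anything commuting with $\mathbf b$ has permutation part in $Z_{S_n}(\sigma_{k.l})=C_k\wr S_l\le S_k\wr S_l$, so the whole centralizer lies in the subgroup $(C_2)^n\rtimes(S_k\wr S_l)\le C_2\wr S_n$, which is isomorphic to $(C_2\wr S_k)\wr S_l$; inside it $\mathbf b$ is the diagonal element $(\mathbf b_0,\dots,\mathbf b_0;\mathrm{id})$, where $\mathbf b_0=[-1,1,\dots,1;\pi_k]\in C_2\wr S_k$ represents $\overline{k}$. Hence $Z_{C_2\wr S_n}(\mathbf b)=Z_{(C_2\wr S_k)\wr S_l}(\mathbf b)=Z_{C_2\wr S_k}(\mathbf b_0)\wr S_l$, using the standard identity $Z_{G\wr S_l}\big((g_0,\dots,g_0;\mathrm{id})\big)=Z_G(g_0)\wr S_l$. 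Finally $\mathbf b_0$ has order exactly $2k$, since $\mathbf b_0^k=[-1,\dots,-1;\mathrm{id}]\ne\mathrm{id}$ and $\mathbf b_0^{2k}=\mathrm{id}$; thus $\langle\mathbf b_0\rangle\cong C_{2k}$ lies in $Z_{C_2\wr S_k}(\mathbf b_0)$, while $|Z_{C_2\wr S_k}(\mathbf b_0)|\le 2k$ (at most $k$ choices for the permutation part, at most $2$ for the sign vector given it) forces equality. So $Z_{C_2\wr S_n}(\mathbf b)\cong C_{2k}\wr S_l$. (The same computation would also yield part (1), via a representative conjugate to $\mathbf a z$; the cases are separated because for odd $k$ one can give the simpler ``constant on blocks'' description, as $C_{2k}\cong C_2\times C_k$ only then.)

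The step I expect to be most delicate is solving the sign equations so that the outcome matches the displayed array verbatim when $\rho$ is nontrivial: keeping straight whether the rotation amount $m_a$ and the length-$m_a$ run of sign $-h_a$ are attached to block $a$ or to $\rho(a)$, and verifying that the cyclic closure condition imposes no extra constraint in that case. The reduction to $(C_2\wr S_k)\wr S_l$ handles the group structure cleanly, but translating its internal wreath coordinates into the explicit entries of the statement — the correspondence $\mathbf b_0^{m_a}(\mathbf b_0^k)^{\epsilon_a}\leftrightarrow(m_a,(-1)^{\epsilon_a})$ — is a short, convention-sensitive bookkeeping task.
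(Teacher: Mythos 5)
Your proposal is correct, and the core computation for part (2) --- choosing the representative with exactly one $-1$ per $k$-block, deriving the sign equations $g_i v_{\tau^{-1}(i)} = v_i g_{\sigma_{k.l}^{-1}(i)}$, and solving them block by block to obtain the run of $m_a$ entries $-h_a$ followed by $k-m_a$ entries $h_a$ --- is exactly what the paper does. You differ in two places, both to your advantage. For part (1) the paper simply takes the all-$(-1)$ representative and declares the argument ``similar to Proposition \ref{prop_kl}''; your observation that this representative is the central element $[-\mathbf{1}_n;(1)]$ times the representative of $k^l$ gives \emph{equality} (not merely conjugacy) of the two centralizers with no further work, and cleanly explains why the shortcut is unavailable for even $k$. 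For the isomorphism type in part (2), the paper writes an explicit map $[\ldots;\tau]\mapsto[c^{m_1},\ldots,c^{m_l};\rho]$ which, as printed, forgets the free signs $h_a$ and hence is not injective; the intended assignment is $c^{m_a+k\epsilon_a}$ with $h_a=(-1)^{\epsilon_a}$, precisely the bookkeeping you flag at the end. Your alternative --- locating the centralizer inside $(C_2\wr S_k)\wr S_l$, applying $Z_{G\wr S_l}\bigl((g_0,\ldots,g_0;\mathrm{id})\bigr)=Z_G(g_0)\wr S_l$, and pinning down $Z_{C_2\wr S_k}(\mathbf{b}_0)=\langle\mathbf{b}_0\rangle\cong C_{2k}$ by comparing the upper bound $2k$ with the order of $\mathbf{b}_0$ --- sidesteps that map entirely and is the more robust route to $C_{2k}\wr S_l$. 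The one convention-sensitive point you rightly single out (whether the rotation amount $m_a$ attaches to block $a$ or to $\rho(a)$) is also left implicit in the paper's displayed equations, so nothing is lost there.
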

\begin{proof}
We begin with the case when $k$ is an odd number.
\begin{enumerate}
    \item Since $k$ is odd, We choose ${\bf a} =[-1,-1,\ldots, -1 ; {\bf \sigma}_{k.l}]$ as representative of conjugacy class  corresponding to signed partition $\overline{k}^l$ in $C_2 \wr S_n.$
    Now the proof in this case is similar to the proof of \ref{prop_kl}.
    \item Now we consider $k$ is an even number. We denote the sequence $\{ -1,\underbrace{1, \ldots, 1}_{k-1~-~~\mathrm{copies}}\}$ by $\overline{\bf 1}_k.$  We choose ${\bf a} = [ \underbrace{\overline{{\bf 1}}_k,\ldots, \overline{{\bf 1}}_k}_{l~-~copies}; {\bf \sigma}_{k.l}]$, as a representative of the conjugacy class corresponding to signed partition $\overline{k}^l.$ If an element ${\bf h}= [h_1, h_2, \ldots, h_n ; \tau]$ commutes with ${\bf a}$, then $\tau \in Z_{S_n}({\bf \sigma}_{k.l}).$ Further $Z_{S_n}({\bf \sigma}_{k.l}) = C_k\wr S_l.$ Now, let $\tau = [f_1,\ldots, f_l;\rho]$, where $f_i \in C_k = \langle (1\cdots k)\rangle$, and $\rho \in S_l$, Now using \cite[Equation 4.1.19]{JK}, we get 
    \begin{equation}\label{equation_action}
          \tau((j-1)k + i) = (\rho(j)-1)k + f_{\rho(j)}(i).
    \end{equation}
  
    With this, we get that, if the element ${\bf h}$ commutes with ${\bf a}$, then we get the following equations, for all $1\leq u \leq l$:
 $$   \begin{smallmatrix}
h_{(u-1)k+1}h_{uk}a_{(\rho^{-1}(u)-1)k+f^{-1}_{u}(1)}&=& a_{(u-1)k+1} &=& -1,\\
h_{(u-1)k+2}h_{(u-1)k+1}a_{(\rho^{-1}(u)-1)k+f^{-1}_{u}(2)}&=& a_{(u-1)k+2}& =& 1\\
\vdots &~& \vdots &~& \vdots \\
h_{uk}h_{uk-1}a_{(\pi^{-1}(u)-1)k+f^{-1}_{u}(k)}&=& a_{uk} &=& 1
\end{smallmatrix}
$$
Let $\pi_k=(1,2,\dots,k)$ and $f_u = \pi_k^{m_u}.$ Then, for all $1\leq u \leq l, $ the above equations implies 
 $$-h_{(u-1)k+1} = \cdots = -h_{(u-1)k+m_u}= h_{(u-1)k+m_u+1} = \cdots = h_{uk}.$$ Hence, we get 
the centralizer $Z_{C_2 \wr S_n}({\bf a})$ is same as  $$\left\{ 
[-{\bf h_1}_{m_1}, {\bf h_1}_{k-m_1},\ldots,{\bf h_l}_{m_l}, {\bf h_l}_{k-m_l}; \tau]\mid  h_i \in C_2, \tau =[\pi_k^{m_1},\ldots,\pi_k^{m_l};\rho]\in Z_{S_n}({\bf \sigma}_{k.l})\right\}.$$ 
The centralizer $Z_{C_2 \wr S_n}({\bf a})$  is isomorphic to the group $C_{2k}\wr S_l$ as given by the following map:
$$[-{\bf h_1}_{m_1}, {\bf h_1}_{k-m_1},\ldots,{\bf h_l}_{m_l}, {\bf h_l}_{k-m_l}; \tau] \mapsto [c^{m_1}, c^{m_2}, \dots, c^{m_l}; \rho ],
$$
where $c$ is generator of cyclic group $C_{2k}.$ 
    \end{enumerate}
\end{proof}
\begin{remark}\label{Remark_split}

Let $n,s,t\in \mathbb N$ such that $n=s+t.$
Let ${\bf s}=[1,2,\dots s]$ and ${\bf t}=[s+1, s+2, \dots n].$ Let $S_{\bf s}$ and $S_{\bf t}$ be groups of permutations of sets $\bf s$ and $\bf t,$ respectively. Clearly $S_{\bf s} \times S_{\bf t}$ is a subgroup of $S_n.$ Further, we can identify $C_2 \wr S_{\bf s} \times C_2 \wr S_{\bf t}$ with a subgroup in $C_2 \wr S_n$ by identifying $([h_1, h_2, \dots, h_s; \rho_s], [h_{s+1}, h_{s+2},\dots h_n; \rho_t])$ with $[h_1, h_2, \dots, h_n; \rho_s\rho_t],$ where $h_i \in C_2$ for all $1\leq i \leq n,$ and $\rho_s \in S_{\bf s}, \rho_t \in S_{\bf t}.$ 

For convenience, we write $S_s$ for the group $S_{\bf s}$ in this article, the reader can understand set ${\bf s}$  from the context. 
\end{remark}

\begin{lemma}\label{Lemma_signed_partition}
Let $n,k \in \mathbb N,$ such that $k|n.$  Let $n = kl$.
Let $s, t\geq 0$ such that $s+t =l$. Then the centralizer $Z_{C_2 \wr S_n}(k^s\overline{k}^t)$ of the conjugacy class corresponding to the signed partition $k^s\overline{k}^t$ is 
conjugate to $Z_{C_2 \wr S_{ks}}(k^s) \times Z_{C_2 \wr S_{kt}}(\overline{k}^t).$
\end{lemma}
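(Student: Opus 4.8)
\smallskip
\noindent\textbf{Proof proposal.} The plan is to choose the representative of the class $k^s\overline{k}^t$ that is block diagonal for the decomposition of Remark~\ref{Remark_split}, to identify its centralizer with a centralizer inside that subgroup, and then to argue that the full centralizer in $C_2\wr S_n$ is no larger, because no element of $C_2\wr S_n$ can carry a positive $k$-cycle onto a negative $k$-cycle.

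Concretely, write $n=ks+kt$ and, via Remark~\ref{Remark_split}, view $C_2\wr S_{ks}\times C_2\wr S_{kt}$ as the subgroup of $C_2\wr S_n$ whose first factor acts on the coordinates $\{1,\dots,ks\}$ and whose second factor acts on $\{ks+1,\dots,n\}$. Pick a representative $\bmf{a}_1\in C_2\wr S_{ks}$ of the class $k^s$ and a representative $\bmf{a}_2\in C_2\wr S_{kt}$ of the class $\overline{k}^t$ (for instance those used in Propositions~\ref{prop_kl} and~\ref{Prop_overline_kl}), and put $\bmf{a}=\bmf{a}_1\bmf{a}_2$. Then $\bmf{a}$ represents the class $k^s\overline{k}^t$ in $C_2\wr S_n$: its underlying permutation is $\sigma_{k.l}$, the first $s$ of whose $k$-cycles carry cycle product $+1$ and the last $t$ of whose $k$-cycles carry cycle product $-1$. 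Since the centralizers of any two representatives of a conjugacy class are conjugate, it suffices to prove $Z_{C_2\wr S_n}(\bmf{a})=Z_{C_2\wr S_{ks}}(\bmf{a}_1)\times Z_{C_2\wr S_{kt}}(\bmf{a}_2)$.

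For the inclusion ``$\subseteq$'', let $\bmf{g}=[h_1,\dots,h_n;\tau]$ centralize $\bmf{a}$. Then $\tau$ centralizes $\sigma_{k.l}$, so $\tau$ permutes the $l$ cycles of $\sigma_{k.l}$ among themselves; and a short direct computation with the product formula (the content of Proposition~\ref{Conjugacy_classes_C2wrSn}) shows that the cycle product of $\bmf{a}$ on a cycle $C$ of $\sigma_{k.l}$ equals the cycle product of $\bmf{a}$ on $\tau(C)$. Hence $\tau$ preserves the family of cycles with product $+1$ and the family of cycles with product $-1$, so it stabilizes the point sets $\{1,\dots,ks\}$ and $\{ks+1,\dots,n\}$; thus $\tau\in S_{ks}\times S_{kt}$ and $\bmf{g}\in C_2\wr S_{ks}\times C_2\wr S_{kt}$. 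As the centralizer of $(\bmf{a}_1,\bmf{a}_2)$ in the direct product $C_2\wr S_{ks}\times C_2\wr S_{kt}$ equals $Z_{C_2\wr S_{ks}}(\bmf{a}_1)\times Z_{C_2\wr S_{kt}}(\bmf{a}_2)$, and this product manifestly centralizes $\bmf{a}$ inside $C_2\wr S_n$, we obtain both inclusions and hence the desired identification; the degenerate cases $s=0$ and $t=0$ are immediate.

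I expect the only substantive point to be the middle step: that the permutation part $\tau$ of any centralizing element must respect the splitting into positive and negative $k$-cycles. This is forced because, in our chosen representative, the positive and the negative $k$-cycles carry the two distinct cycle products $+1$ and $-1$, while conjugation preserves cycle products along cycles; the rest is the direct-product bookkeeping of Remark~\ref{Remark_split} together with the standard behaviour of centralizers in direct products (as in Theorem~\ref{Direct_Product}).
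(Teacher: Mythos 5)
Your proposal is correct and follows essentially the same route as the paper: the same block-diagonal representative, the same identification via Remark~\ref{Remark_split}, and the same key point that the permutation part $\tau$ of a centralizing element cannot send a positive $k$-cycle to a negative one. The only difference is cosmetic --- where the paper derives a contradiction from the explicit coordinate equations $h_{(w-1)k+1}h_{wk}=-1$, $h_{(w-1)k+2}h_{(w-1)k+1}=1,\dots$, you invoke the invariance of cycle products under conjugation (the content of Proposition~\ref{Conjugacy_classes_C2wrSn}), which is a clean and valid shortcut.
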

\begin{proof}
We consider the representative ${\bf a}=[a_1, a_2, \dots a_n; \sigma_{k.l}]= [{\bf 1}_{sk}, ; \sigma_{k.l}]$ of conjugacy class corresponding to the signed partition $k^s\overline{k}^t$ of $n.$ Using the remark \ref{Remark_split}, we identify the element ${\bf a}$ with element $({\bf a}_1, {\bf a}_2)= ([{\bf 1}_{sk}; \sigma_{k.s}], [\overline{{\bf 1}}_k,\ldots,\overline{{\bf 1}_k}; \sigma_{k.t}]$ of $C_2 \wr S_{ks} \times C_2 \wr S_{kt}, $ where $\sigma_{k.s}$ is product of first $s$ cycles of $\sigma_{k.l},$ and $\sigma_{k.t}$ is product of remaining $t$ cycles. It is clear that $Z_{C_2 \wr S_{ks}}({\bf a}_1) \times Z_{C_2 \wr S_{kt}}({\bf a}_2)$ is contained in $Z_{C_2 \wr S_n}({\bf a}).$

Now let ${\bf h}=[h_1, h_2, \dots h_n ; \tau] \in Z_{C_2 \wr S_n}({\bf a}).$ We get $\tau \in Z_{S_n}(\sigma_{k.l})=C_k \wr S_l,$ where $C_k =\langle (1,2,\dots k)\rangle.$ We denote $\tau =[f_1, f_2, \dots f_l; \rho], $ where $f_i\in C_k$ for all $1\leq i\leq l,$ and $\rho\in S_l.$ The action of $\tau$ is given in the equation  \ref{equation_action}. In addition, we get the following equations:

$$\begin{smallmatrix}
h_{(u-1)k+1}h_{uk}a_{(\rho^{-1}(u)-1)k+f^{-1}_u(1)} = a_{(u-1)k+1} = 1, &~~& h_{(w-1)k+1}h_{wk}a_{(\rho^{-1}(w)-1)k+f^{-1}_w(1)}= a_{(w-1)k+1} = -1,\\
h_{(u-1)k+2}h_{(u-1)k+1}a_{(\rho^{-1}(u)-1)k+f^{-1}_u(2)} = a_{(u-1)k+2} = 1, &~~&h_{(w-1)k+2}h_{(w-1)k+1}a_{(\rho^{-1}(w)-1)k+f^{-1}_w(2)}= a_{(w-1)k+2} = 1\\
\vdots & ~~ & \vdots\\
h_{uk}h_{uk-1}a_{(\rho^{-1}(u)-1)k+f^{-1}_u(k)} = a_{uk} =1,&~~& h_{wk}h_{wk-1}a_{(\rho^{-1}(w)-1)k+f^{-1}_w(k)}= a_{wk} = 1,
\end{smallmatrix} $$
where $1\leq u \leq s$, and $s+1\leq w\leq s+t=l.$
Suppose, $\tau=[f_1, f_2, \dots f_l; \rho]$ is such that for some $1 \leq u \leq s$, $\rho(u) = w$, where $s+1 \leq w\leq l$. Then $a_{(\rho^{-1}(w)-1)k + f_w^{-1}(i)} = a_{(u-1)k+f_w^{-1}(i)} = 1$, for all $1\leq i \leq k$. Thus, we have:
$$\begin{smallmatrix}
h_{(w-1)k+1}h_{wk} = -1\\
h_{(w-1)k+2}h_{(w-1)k+1} = 1\\
\vdots\\
h_{wk}h_{wk-1} = 1
\end{smallmatrix}
$$
hence, we have $h_{wk} = -h_{(w-1)k+1}$, and $h_{wk} = h_{wk-1}= \cdots = h_{(w-1)k+2} = h_{(w-1)k+1},$ which is a contradiction. Thus, the permutation $\tau \in Z_{S_n}(\sigma_{k.l})$ permutes $1,\ldots, sk$ among themselves, and permutes the numbers $sk+1,\ldots,n$ among themselves. Hence $\tau = \tau_s\tau_t$, where $\tau_s \in Z_{S_{sk}}(\sigma_{k.s})$, and $\tau_t \in Z_{S_{kt}}(\sigma_{k.t}),$ ($S_{kt}$ is group of permutations of the set $\{sk+1, \ldots, n\}$).

Therefore $Z_{{S_n}}(\bmf{a})$  $$= \left\{\left[\begin{matrix}\bmf{h_1}_k,\ldots,\bmf{h_s}_k, -\bmf{h_{s+1}}_{m_1} ,\bmf{h_{s+1}}_{k-m_1} ,\ldots,-\bmf{h_{s+t}}_{m_t} ,\bmf{h_{s+t}}_{k-m_t}
\end{matrix}; \tau_s\tau_t\right] \mid \begin{matrix}\tau_s \in Z_{S_{sk}}(\sigma_{k.s})\\ \tau_t \in Z_{S_{tk}}(\sigma_{k.t})\end{matrix}\right\},$$
which is same as  $Z_{C_2 \wr {S_{sk}}}({\bf a}_1)\times Z_{C_2 \wr {S_{tk}}}({\bf a}_2).$ This completes this proof.
\end{proof}
\begin{example}
Consider the group $C_2\wr S_2$ and ${\bf a}= [1,-1 ; (1)] \in C_2 \wr S_n.$ We note that the signed partition corresponding to conjugacy class of ${\bf a}$ is $1\overline{1}.$ We compute:
$$[1,1; (1,2)] [1,-1; (1)] \neq [1,-1 ; (1)][1,1 ; (12)].$$
\end{example}
\begin{theorem}\label{ThOddEven}
Let ${\bf \lambda} =k_1^{l_1}\cdots k_u^{l_u}m_1^{r_1}\cdots m_v^{r_v}$ be a partition of $n,$ where $k_1< \cdots < k_u$ be odd numbers, and $m_1<\cdots <m_v$ be even numbers. Now, for $1\leq i \leq u$, let $s_i, t_i$ be non-negative integers such that $s_i + t_i = l_i$, and like wise for $1\leq j \leq v$, we take $x_j+y_j = r_j$. Now, consider the conjugacy class of $C_2 \wr S_n$ corresponding to the signed partition: 
$${\bf \overline{\lambda}}=k_1^{s_1}\overline{k_1}^{t_1}\cdots k_u^{s_u}\overline{k_u}^{t_u}m_1^{x_1}\overline{m_1}^{y_1}\cdots m_v^{x_v}\overline{m_v}^{y_v}.$$
The centralizer of this conjugacy class is conjugate to:
$$\left(\prod_{i=1}^u (Z_{C_2 \wr {S_{s_ik_i}}}(k_i^{s_i})\times Z_{C_2 \wr{S_{t_ik_i}}}(\overline{k_i}^{t_i})) \right)\times \left(\prod_{j=1}^v (Z_{C_2 \wr{S_{x_jm_j}}}(m_j^{x_j})\times Z_{C_2 \wr {S_{y_jm_j}}}(\overline{m_j}^{y_j})) \right).$$
\end{theorem}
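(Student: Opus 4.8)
The plan is to argue exactly as in Lemma~\ref{Lemma_signed_partition}, but now accounting for several distinct part values simultaneously; the statement is essentially the iteration of that lemma together with the standard product description of the centralizer of a permutation in $S_n$.

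First I would fix a convenient block-diagonal representative. Writing $n = \sum_{i=1}^u (s_ik_i + t_ik_i) + \sum_{j=1}^v(x_jm_j + y_jm_j)$, I decompose $\{1,\dots,n\}$ into consecutive blocks of these sizes, and on the block of size $s_ik_i$ I put the representative of $k_i^{s_i}$ from Proposition~\ref{prop_kl}, on the block of size $t_ik_i$ the representative of $\overline{k_i}^{t_i}$ from Proposition~\ref{Prop_overline_kl}, and similarly for the even parts; call the resulting element $\mathbf{a}=[a_1,\dots,a_n;\sigma_\lambda]$, where $\sigma_\lambda$ has cycle type $\lambda$ and is itself block-diagonal. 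Via Remark~\ref{Remark_split} this identifies the product group $\prod_{i}\bigl(C_2\wr S_{s_ik_i}\times C_2\wr S_{t_ik_i}\bigr)\times\prod_j\bigl(C_2\wr S_{x_jm_j}\times C_2\wr S_{y_jm_j}\bigr)$ with a subgroup of $C_2\wr S_n$ containing $\mathbf{a}$, and the inclusion $\supseteq$ of the theorem is then immediate, since each block commutes with the corresponding block of $\mathbf{a}$.

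The real work is the reverse inclusion: every $\mathbf{h}=[h_1,\dots,h_n;\tau]\in Z_{C_2\wr S_n}(\mathbf{a})$ lies in that subgroup, i.e.\ $\tau$ is block-diagonal for the above decomposition into (at most) $2u+2v$ blocks. Two obstructions force this. First, $\tau\in Z_{S_n}(\sigma_\lambda)\cong\prod_i\bigl(C_{k_i}\wr S_{l_i}\bigr)\times\prod_j\bigl(C_{m_j}\wr S_{r_j}\bigr)$, because a permutation centralizing $\sigma_\lambda$ can only carry a cycle to a cycle of the same length; this already makes $\tau$ respect the coarser decomposition into the $u+v$ blocks indexed by distinct part values. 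Second, within the $l_i$ cycles of length $k_i$ (resp.\ the $r_j$ cycles of length $m_j$) one repeats the sign computation from the proof of Lemma~\ref{Lemma_signed_partition}: if $\tau$ sent a cycle whose cycle product is $+1$ onto one whose cycle product is $-1$, then comparing the two columns of commutation equations coming from $\mathbf{h}\mathbf{a}=\mathbf{a}\mathbf{h}$ yields simultaneously $h_{wk}=-h_{(w-1)k+1}$ and $h_{wk}=h_{wk-1}=\dots=h_{(w-1)k+1}$, a contradiction. Hence $\tau$ also respects the refinement of each length-block into its positive and negative sub-blocks, so $\mathbf{h}$ is block-diagonal.

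Once block-diagonality of every centralizing element is established, commuting with $\mathbf{a}$ is equivalent to commuting block-by-block with the corresponding piece of $\mathbf{a}$, so $Z_{C_2\wr S_n}(\mathbf{a})$ is exactly the product of the groups $Z_{C_2\wr S_{s_ik_i}}(k_i^{s_i})$, $Z_{C_2\wr S_{t_ik_i}}(\overline{k_i}^{t_i})$, $Z_{C_2\wr S_{x_jm_j}}(m_j^{x_j})$, $Z_{C_2\wr S_{y_jm_j}}(\overline{m_j}^{y_j})$, as asserted; alternatively the whole argument can be organized as an induction on $u+v$, peeling off one part value at a time. The main obstacle is making the block-separation argument uniform across odd and even part lengths: the \emph{internal} shape of a negative block differs with parity (Proposition~\ref{Prop_overline_kl}), but the \emph{separation} of blocks uses only the length obstruction and the sign obstruction, both insensitive to that parity, so the computation of Lemma~\ref{Lemma_signed_partition} applies verbatim in each block.
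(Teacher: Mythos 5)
Your proposal is correct and follows essentially the same route as the paper: first separate the centralizer by distinct cycle lengths via the standard description of $Z_{S_n}(\sigma_\lambda)$ (the paper cites \cite{JK}, Eq.~4.1.19, and Remark~\ref{Remark_split}), and then separate each length-block into its positive and negative sub-blocks by the sign-contradiction computation of Lemma~\ref{Lemma_signed_partition}. The only difference is cosmetic --- you inline the computation from that lemma where the paper simply invokes it.
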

\begin{proof}
    Consider $\sigma\in S_n$  be cycle type ${\bf \lambda}.$ Let $\sigma = \sigma_1\cdots \sigma_u\mu_1\cdots\mu_v$, where for $1\leq i \leq u$, $\sigma_i$ is a product of $l_i$ number of cycles of length $k_i$, and for $1\leq j \leq v$, $\mu_v$ is the product of $r_j$ number of cycles of length $m_j.$ 

From \cite{JK}(Equation 4.1.19), We get 
$$Z_{S_n}(\sigma)= \left(\prod_{i=1}^u Z_{S_{k_il_i}}(\sigma_i)\right)\times \left(\prod_{j=1}^vZ_{S_{m_jr_j}}(\mu_j)\right).$$ Hence any element $\nu \in Z_{S_n}(\sigma)$ is of the form $\tau_1\cdots\tau_u\eta_1\cdots\eta_v$, where $\tau_i\in Z_{S_{k_il_i}}(\sigma_i)$ and $\eta_j \in Z_{S_{m_jr_j}}(\mu_j)$. 

Using the remark \ref{Remark_split}, we get that $Z_{C_2 \wr S_n}({\bf \overline{\lambda}})$ is same as 
$$\left(\prod_{i=1}^u Z_{C_2\wr S_{k_il_i}}(k_i^{l_i})\right)\times \left(\prod_{j=1}^v Z_{C_2 \wr S_{m_jr_j}}(m_j^{r_j})\right).$$ 
Now the result follows from Lemma \ref{Lemma_signed_partition}.

\end{proof}
\subsection{Proof of Theorem \ref{ThMain1}}
Let ${\bf 1}_k $ denotes the sequence $(\underbrace{1,1,\ldots,1}_{k~-~\mathrm{copies}}),$  similarly ${\bf -1}_k$ denotes the sequence$(\underbrace{-1,-1,\ldots,-1}_{k~-~\mathrm{copies}}),$ and 
$\overline{\bmf{1}}_k$ denotes the sequence $(-1,\underbrace{1, \ldots, 1}_{k-1~-~~\mathrm{copies}})$.\
Let $\overline{\Lbd}$ be the signed partition of $n$ as defined in the statement of Theorem \ref{ThOddEven}. We consider the following representative of the conjugacy class corresponding to $\overline{\Lbd}$
$${\bf a}_\sigma = \left( {\bf 1}_{\sum_{i=1}^u s_ik_i}, {\bf -1}_{\sum_{i=1}^u t_ik_i}, {\bf 1}_{\sum_{i=1}^v x_im_i},{\bf \bar{1}}_{\sum_{i=1}^v y_im_i}, \sigma \right),$$

where $\sigma \in S_n$ be of cycle type $\Lbd$ as defined in Theorem \ref{ThOddEven}.
More precisely, if $\sigma_k$ denotes a cycle of length $k$ and $\sigma_k^p$ denotes the product of $p$ number of cycles of length $k,$ then 
 $$ \sigma = \sigma_{k_1}^{s_1+t_1} \sigma_{k_2}^{s_2+t_2}\dots \sigma_{k_u}^{s_u+t_u} \sigma_{m_1}^{x_1+y_1}\sigma_{m_2}^{x_2+y_2}\dots \sigma_{m_v}^{x_v+y_v}.$$  
Moreover, we call $\sigma$ the $S_n$-part of the element ${\bf a}_\sigma.$ 

\begin{lemma}\label{S_n-parts}
    If two elements of $C_2 \wr S_n$ are $z$-conjugate, then their $S_n$-parts are conjugate in $S_n.$
\end{lemma}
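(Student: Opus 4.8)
The plan is to project everything to $S_n$. Let $\pi\colon C_2\wr S_n\to S_n$, $[a_1,\dots,a_n;\sigma]\mapsto\sigma$, be the canonical surjection. A group homomorphism sends conjugate subgroups to conjugate subgroups, so if $\mathbf{a},\mathbf{b}\in C_2\wr S_n$ are $z$-conjugate then $\pi(Z_{C_2\wr S_n}(\mathbf{a}))$ and $\pi(Z_{C_2\wr S_n}(\mathbf{b}))$ are conjugate in $S_n$, and in addition $|Z_{C_2\wr S_n}(\mathbf{a})|=|Z_{C_2\wr S_n}(\mathbf{b})|$. Replacing $\mathbf{a}$ or $\mathbf{b}$ by a conjugate affects neither of these, nor the $S_n$-conjugacy class of its $S_n$-part, so I may take $\mathbf{a},\mathbf{b}$ to be the standard representatives of the conjugacy classes of signed partitions $\overline\lambda,\overline\mu$ with underlying unsigned types $\lambda,\mu$; then their $S_n$-parts have cycle types $\lambda$ and $\mu$, and the statement to prove is $\lambda=\mu$.

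First I would identify $\pi(Z_{C_2\wr S_n}(\mathbf{a}))$ up to $S_n$-conjugacy. For a part-size $k$ of $\lambda$, let $a_k$ (resp.\ $b_k$) be the number of positively (resp.\ negatively) signed parts of $\overline\lambda$ equal to $k$. By Theorem \ref{ThOddEven}, $Z_{C_2\wr S_n}(\mathbf{a})$ is conjugate to a direct product, over all such $k$, of blocks $Z_{C_2\wr S_{ka_k}}(k^{a_k})$ and $Z_{C_2\wr S_{kb_k}}(\overline{k}^{b_k})$. By Propositions \ref{prop_kl} and \ref{Prop_overline_kl}, each of these blocks contains, for every permutation $\tau$ lying in the corresponding centralizer of a product of disjoint $k$-cycles, an element with $S_n$-part $\tau$: take all sign-entries equal to $+1$ when the block corresponds to positive parts or to negative parts of odd length, and use the explicit form $\tau=[\pi_k^{m_1},\dots,\pi_k^{m_{b_k}};\rho]$ of Proposition \ref{Prop_overline_kl}(2) when it corresponds to negative parts of even length. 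Hence each block surjects under $\pi$ onto the corresponding factor $C_k\wr S_{a_k}$ or $C_k\wr S_{b_k}$, and, up to $S_n$-conjugacy,
$$\pi\bigl(Z_{C_2\wr S_n}(\mathbf{a})\bigr)\;=\;\prod_{k}\bigl(C_k\wr S_{a_k}\bigr)\times\bigl(C_k\wr S_{b_k}\bigr),$$
whose orbits on $\{1,\dots,n\}$ are one orbit of size $ka_k$ for each $k$ with $a_k\ge 1$ and one of size $kb_k$ for each $k$ with $b_k\ge 1$, the action on an orbit of size $N=km$ being the transitive group $C_k\wr S_m$.

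It remains to recover $\lambda$ from this data. The $S_n$-conjugacy class of the displayed subgroup records its orbit sizes and, on an orbit of size $N$, the order $k^m m!$ of the group $C_k\wr S_m$ present there. If $N=1$ then $(k,m)=(1,1)$; if $N\ge 3$ then $m\mapsto (N/m)^m m!$ is strictly increasing on the divisors of $N$, so $(k,m)$ is recovered from $N$ and that order; and if $N=2$ the groups $C_2\wr S_1$ and $C_1\wr S_2$ are literally the same permutation group. So $\pi(Z_{C_2\wr S_n}(\mathbf{a}))$ determines $\lambda$ completely except that it cannot decide whether an orbit of size $2$ arises from a single $2$-cycle — contributing a block $Z_{C_2\wr S_2}(2)$ or $Z_{C_2\wr S_2}(\overline{2})$ of order $4$ — or from a pair of equally signed fixed points — contributing a block $Z_{C_2\wr S_2}(1^2)$ or $Z_{C_2\wr S_2}(\overline{1}^{2})$ of order $8$; thus it fixes the number $q$ of size-$2$ orbits but not the number $p\le q$ of them that are $2$-cycles. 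Every other block has order $(2k)^c c!$ with $c\in\{a_k,b_k\}$, which is independent of the signs and hence already determined by $\pi(Z_{C_2\wr S_n}(\mathbf{a}))$. Therefore $|Z_{C_2\wr S_n}(\mathbf{a})|$ equals a quantity depending only on $\pi(Z_{C_2\wr S_n}(\mathbf{a}))$ times $4^{p}8^{q-p}$, so knowledge of $q$ together with $|Z_{C_2\wr S_n}(\mathbf{a})|$ pins down $p$, and with it $\lambda$. Applying the same to $\mathbf{b}$ and using that both data agree gives $\lambda=\mu$, i.e.\ the $S_n$-parts of $\mathbf{a}$ and $\mathbf{b}$ are conjugate in $S_n$.

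The one genuinely delicate point is the $N=2$ coincidence above: the image of the centralizer under $\pi$ does \emph{not} by itself determine the cycle type of $\pi(\mathbf{a})$, and it is precisely the extra invariant $|Z_{C_2\wr S_n}(\cdot)|$ (equivalently, the isomorphism type of the centralizer) that bridges the gap. Everything else is bookkeeping over Theorem \ref{ThOddEven} and Propositions \ref{prop_kl}, \ref{Prop_overline_kl}, together with the elementary analysis of $C_k\wr S_m$ as a permutation group on $km$ points.
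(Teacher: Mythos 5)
Your proof is correct, but it travels a genuinely different road from the paper's. The paper works with the \emph{center} of the centralizer: projecting $Z(Z_{C_2\wr S_n}(\mathbf{a}_\sigma))$ to $S_n$ yields the subgroup $\langle \sigma_{k_i}^{s_i},\sigma_{k_i}^{t_i},\sigma_{m_i}^{x_i},\sigma_{m_i}^{y_i}\rangle$, whose orbits on $\{1,\dots,n\}$ are exactly the individual cycles of $\sigma$; since $z$-conjugacy conjugates centers of centralizers, the cycle type is read off from the orbit sizes with no ambiguity and no further invariant. You instead project the \emph{whole} centralizer, whose image $\prod_k (C_k\wr S_{a_k})\times(C_k\wr S_{b_k})$ fuses all cycles of a given signed type into one orbit, so you must reconstruct $(k,m)$ from the pair (orbit size $N$, order of the induced group) --- your monotonicity claim is right, since $f(m)=(N/m)^m m!$ satisfies $f(m+1)/f(m)=N/(1+1/m)^m>N/e\ge 1$ for $N\ge 3$ --- and you correctly isolate the one genuine collision at $N=2$ (a single $2$-cycle versus two equal-signed fixed points, both projecting to $\langle(12)\rangle$) and break it with the conjugation invariant $|Z_{C_2\wr S_n}(\cdot)|$, which is $4$ for the former block and $8$ for the latter. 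What each approach buys: the paper's is cleaner once the centers are computed (the orbit structure immediately \emph{is} the partition), and those center computations are reused later in the $D_n$ analysis; yours avoids computing centers altogether at the cost of the numerical bookkeeping and the $N=2$ case analysis, and it makes explicit an instructive point the paper's argument hides --- that the $S_n$-image of the centralizer alone does \emph{not} determine the cycle type, so some second invariant is genuinely needed on that route.
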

\begin{proof}
Let ${\bf a}_\sigma\in C_2 \wr S_n$ as defined above.
Using the Theorem \ref{ThOddEven}, we get

$$ Z_{C_n}(\bmf{a}_\sigma)= \begin{matrix}
                                \displaystyle\prod_{i=1}^u \left(Z_{W(C_{s_ik_i})}([{\bf 1}_{k_i s_i}; \sigma_{k_i}^{s_i}])\times Z_{W(C_{t_ik_i})}([ {\bf -1}_{k_it_i}, \sigma_{k_i}^{t_i}]) \right)\\
                                \displaystyle\prod_{i=1}^v \left( Z_{W(C_{x_im_i})}( [{\bf 1}_{m_ix_i}; \sigma_{m_i}^{x_i}])\times Z_{W(C_{y_im_i})}( [\bmf{\overline{1}}_{m_iy_i};\sigma_{m_i}^{y_i}]).\right)
                               \end{matrix}$$ 
 Now using the fact that center of direct product of groups is same as direct product of centers of groups, we get:
$$Z(Z_{C_n}({\bf a}_\sigma))= \begin{matrix}
 \prod_{i=1}^u\left( \langle [{\bf 1}_{k_is_i}; \sigma_{k_i}^{s_i}],  [ {\bf -1}_{k_is_i}; \sigma_{k_i}^{s_i}] \rangle \times  \langle [ {\bf 1}_{k_it_i}; \sigma_{k_i}^{t_i}], [{\bf -1}_{k_it_i}; \sigma_{k_i}^{t_i})] \rangle\right) \times \\ \prod_{i=1}^v \left(\langle [{\bf 1}_{m_ix_i}; \sigma_{m_i}^{x_i}], [ {\bf -1}_{m_ix_i}; \sigma_{m_i}^{x_i}]\rangle \times \langle [{ \bf \overline{1}}_{m_iy_i};\sigma_{m_i}^{y_i}] \rangle\right). \end{matrix}$$ 
 
 Let $A_{{\bf a}_\sigma}$ be the set of $S_n$-parts of all elements of $Z(Z_{C_n}({\bf a}_\sigma)).$ From the above discussion, we get  $A_{{\bf a}_\sigma} = \langle \sigma_{k_i}^{s_i}, \sigma_{k_i}^{t_i}, \sigma_{m_i}^{x_i}, \sigma_{m_i}^{y_i} \rangle$ is a subgroup of $S_n.$ Let $\eta_{{\bf a}_\sigma}$ be natural action of $S_n$ on set $[1,2,\dots, n]$ restricted on subgroup $A_{{\bf a}_\sigma}.$ The orbits of this action are precisely the cycles of $\sigma.$ Thus, this action determines the cycle type $\Lbd$ of $\sigma.$ 
 
 If ${\bf b}_\pi=[b_1, b_2, \dots, b_n; \pi] \in C_2 \wr S_n$ is $z$-conjugate to ${\bf a}_\sigma$ in $C_n,$ then $A_{{\bf b}_\pi}$ and $A_{{\bf a}_\sigma}$ are conjugate subgroups of $S_n.$ Therefore, orbits of actions $\eta_{{\bf b}_\pi}$ and $\eta_{{\bf a}_\sigma}$ determines the same partition of $n.$ Hence $\sigma$ and $\pi$ are conjugate in $S_n.$

\end{proof}

In the view of above lemma, we get that if two conjugacy classes of $C_2 \wr S_n$ lie in same $z$-class, then the signed partition $\overline{\Lbd}_1$ and $\overline{\Lbd}_2$ corresponding to these two conjugacy classes have the same underlying partition of $n,$ i.e. $\Lbd_1=\Lbd_2.$ Therefore, the following theorem gives all the $z$-classes of $C_2 \wr S_n.$

\begin{theorem}\label{ZThOddEven}
Let $n$ be a positive integer, and let $0 <k_1<k_2 < \cdots < k_u \leq n$ be odd positive integers, and $0 < m_1 < m_2< \cdots < m_v \leq n$ be even positive integers,  and let $l_1, \ldots, l_u$ and $r_1,\ldots, r_v$ be non-negative integers such that $$\sum_{i= 1}^ul_ik_i + \sum_{j=1}^v r_jm_j  =n.$$  
For each $ 1 \leq  i \leq u$: Let $0 \leq t_i \leq s_i \leq l_i$, be such that $s_i + t_i = l_i$. For each $1\leq j \leq v$: Let $0 \leq x_j,v_j \leq r_j$ be such that $x_j + y_j = r_j$. Then the following holds:
The conjugacy classes $$k_1^{c_1}\overline{k_1}^{d_1}\cdots k_u^{c_u}\overline{k_u}^{d_u}m_1^{x_1}\overline{m_1}^{y_1}\cdots m_v^{x_v}\overline{m_v}^{y_v},$$ where for $1\leq i \leq u$, the set $\{c_i ,d_i\} = \{s_i,t_i\}$ are all in the same $z$-class of $C_2 \wr S_n$.
\end{theorem}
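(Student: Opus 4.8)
The plan is to prove this by showing that swapping the roles of the "positive-sign" and "negative-sign" blocks of a single part length $k_i$ (or $m_j$) produces a conjugacy class whose centralizer is conjugate to the original one; applying such swaps one at a time then connects all the claimed conjugacy classes within a single $z$-class. By Theorem \ref{ThOddEven}, the centralizer of the conjugacy class indexed by $k_1^{s_1}\overline{k_1}^{t_1}\cdots m_v^{x_v}\overline{m_v}^{y_v}$ is conjugate to the direct product
$$\left(\prod_{i=1}^u \big(Z_{C_2 \wr S_{s_ik_i}}(k_i^{s_i})\times Z_{C_2 \wr S_{t_ik_i}}(\overline{k_i}^{t_i})\big)\right)\times\left(\prod_{j=1}^v \big(Z_{C_2 \wr S_{x_jm_j}}(m_j^{x_j})\times Z_{C_2 \wr S_{y_jm_j}}(\overline{m_j}^{y_j})\big)\right),$$
so it suffices to observe that this product is unchanged (up to the obvious reordering of factors, which does not affect the group up to isomorphism or even up to conjugacy inside $C_2 \wr S_n$) when we interchange $s_i\leftrightarrow t_i$ for a fixed odd $k_i$. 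The key local fact I would isolate and prove is that $Z_{C_2 \wr S_{ak_i}}(k_i^{a})$ and $Z_{C_2 \wr S_{ak_i}}(\overline{k_i}^{a})$ are conjugate in $C_2 \wr S_{ak_i}$ whenever $k_i$ is odd; indeed Proposition \ref{Prop_overline_kl}(1) shows both are conjugate to the same explicit group $\{[\mathbf{h_1}_{k_i},\ldots,\mathbf{h_a}_{k_i};\tau]\mid h_r=\pm1,\ \tau\in Z_{S_{ak_i}}(\sigma_{k_i.a})\}$. For the even part lengths $m_j$ no swap is needed because the statement fixes $x_j,y_j$ (the exponents are not being exchanged there), so those factors are literally identical on both sides.

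The steps, in order, are: (1) fix the representative $\mathbf a_\sigma$ from the discussion preceding Lemma \ref{S_n-parts} for the class $k_1^{s_1}\overline{k_1}^{t_1}\cdots$; (2) invoke Theorem \ref{ThOddEven} to write $Z_{C_2\wr S_n}(\mathbf a_\sigma)$, up to conjugacy, as the displayed direct product; (3) prove the local conjugacy lemma: for odd $k$, $Z_{C_2\wr S_{ak}}(k^a)$ and $Z_{C_2\wr S_{ak}}(\overline k^a)$ are conjugate, using Proposition \ref{Prop_overline_kl}(1) together with Proposition \ref{prop_kl} — in fact one can pin down an explicit conjugating element of $C_2\wr S_{ak}$, namely an element with all cycle products equal to $-1$ sitting over the identity permutation, since conjugating $[\mathbf 1_{ak};\sigma_{k.a}]$ by $[\mathbf{-1}_{ak};e]$ yields $[\mathbf{(-1)\cdot(-1)}\cdots;\sigma_{k.a}]=[\mathbf{-1}_{ak};\sigma_{k.a}]$ precisely because $k$ is odd, so each cycle product flips sign; (4) conclude that interchanging $\{s_i,t_i\}$ for any single $i$ gives a conjugacy class with conjugate centralizer, hence $z$-conjugate; (5) iterate over $i=1,\ldots,u$ to reach an arbitrary choice of $\{c_i,d_i\}=\{s_i,t_i\}$, since $z$-conjugacy is an equivalence relation.

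The main obstacle is step (3): one must verify that the two centralizers are conjugate \emph{as subgroups of the ambient group} $C_2\wr S_{ak}$, not merely abstractly isomorphic — abstract isomorphism is immediate from Proposition \ref{Prop_overline_kl}(1) but is not enough for $z$-conjugacy. The cleanest route is to exhibit the conjugating element explicitly and check it genuinely normalizes one centralizer onto the other, using the multiplication formula $\mathbf a_\sigma \mathbf b_\pi=[a_1 b_{\sigma^{-1}(1)},\ldots;\sigma\pi]$ together with the oddness of $k$ to track how cycle products transform. Once this single computation is in hand, everything else is bookkeeping: the direct-product decomposition of Theorem \ref{ThOddEven} lets the local conjugacy be applied coordinate-wise, and Lemma \ref{S_n-parts} (already proved) guarantees that this exhausts the $z$-class, so no spurious extra classes get merged. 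A minor point to state carefully is that permuting the tensor factors of the direct product corresponds to an honest conjugation inside $C_2\wr S_n$ (relabel the underlying blocks of $\{1,\ldots,n\}$ via a suitable permutation), so the "up to reordering" in step (2) is legitimate for the purpose of $z$-conjugacy.
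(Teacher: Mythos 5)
Your proposal follows essentially the same route as the paper: decompose the centralizer via Theorem \ref{ThOddEven} (and Lemma \ref{Lemma_signed_partition}), then use Propositions \ref{prop_kl} and \ref{Prop_overline_kl}(1) to see that for odd $k$ the centralizers of $k^a$ and $\overline{k}^a$ are the same explicit subgroup, so exchanging $s_i\leftrightarrow t_i$ changes the centralizer only by a conjugation; your explicit attention to realizing the reordering of direct factors by an honest conjugation in $C_2\wr S_n$ is a point the paper glosses over. One correction to an auxiliary remark in your step (3): the element $[\mathbf{-1}_{ak};e]$ is central in $C_2\wr S_{ak}$, so conjugation by it is trivial (what you computed is the product, not a conjugate), and indeed $[\mathbf{1}_{ak};\sigma_{k.a}]$ and $[\mathbf{-1}_{ak};\sigma_{k.a}]$ are \emph{not} conjugate since they have distinct signed cycle types $k^a$ and $\overline{k}^a$ --- but this slip is harmless, because all that is needed is that their centralizers coincide as subgroups, which the two propositions already provide.
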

\begin{proof} For the signed cycle type $k_1^{s_1}\overline{k_1}^{t_1}\cdots k_u^{s_u}\overline{k_u}^{t_u}m_1^{x_1}\overline{m_1}^{y_1}\cdots m_v^{x_v}\overline{m_v}^{y_v}$, we know from Theorem~\ref{ThOddEven}, that the centralizer of this class is conjugate to $$ \left(\prod_{i=1}^u (Z_{C_2 \wr S_{s_ik_i}}(k_i^{s_i})\times Z_{C_2\wr S_{t_ik_i}}(\overline{k_i}^{t_i})) \right)\times \left(\prod_{j=1}^v (Z_{C_2\wr S_{x_jm_j}}(m_j^{x_j})\times Z_{C_2 \wr S_{y_jm_j}}(\overline{m_j}^{y_j})) \right)$$
Now, consider the conjugacy class corresponding to the following signed partition of $n,$ $$k_1^{c_1}\overline{k_1}^{d_1}\cdots k_u^{c_u}\overline{d_u}^{t_u}m_1^{x_1}\overline{m_1}^{y_1}\cdots m_v^{x_v}\overline{m_v}^{r_v},$$ where $\{c_i, d_i\} = \{s_i, t_i\}$ for each $i = 1,\ldots,u$. 

For the odd parts $k_i$, where $i = 1,\ldots, u$, we get using Proposition \ref{prop_kl} and Proposition \ref{Prop_overline_kl}(1) that the centralizer of $k_i^{c_i}\overline{k_i}^{d_i}$ is conjugate to the centralizer of $k_i^{s_i}\overline{k_i}^{t_i}$. 

From Proposition \ref{Prop_overline_kl}(2), we see that the above does not hold for the even parts. Thus, we conclude that all conjugacy classes $k_1^{c_1}\overline{k_1}^{d_1}\cdots k_u^{c_u}\overline{d_u}^{t_u}m_1^{x_1}\overline{m_1}^{y_1}\cdots m_v^{x_v}\overline{m_v}^{r_v},$ where $\{c_i,d_i\} = \{s_i, t_i\}$ are all in the same $z$-class in $C_2 \wr S_n$. \end{proof}

\begin{proof}[Proof of Theorem~\ref{ThMain1}]
Let $1 \leq k_1 < k_2 < \cdots < k_u \leq n$ be odd positive integers and $2 \leq m_1 < m_2 < \cdots <m_v \leq n$ be even positive integers. Let $\Lbd=k_1^{l_1}\cdots k_u^{l_u}m_1^{r_1}\cdots m_v^{r_v}$ be a partition of $n$. All the possible conjugacy classes of $C_n \wr S_n$ with $\Lbd$ as an underlying partition are $$k_1^{s_1}\overline{k_1}^{l_1-s_1}\cdots k_u^{s_u}\overline{k_u}^{l_u-s_u}m_1^{x_1}\overline{m_1}^{r_1-x_1} \cdots m_v^{x_v}\overline{m_v}^{r_v-x_v},$$ where $0 \leq s_i \leq l_i$ for $1 \leq i \leq u$, and $0 \leq x_j \leq r_j$ for $1 \leq j \leq v$. By Theorem~\ref{ZThOddEven}, we can take $0 \leq s_i \leq \left\lfloor \frac{l_i}{2}\right\rfloor$. So, we have $\left\lfloor \frac{l_i}{2}\right\rfloor + 1$ choices for $s_i$ for each $1 \leq i \leq l_i$; and $r_j + 1$ choices for $x_j$. Hence, we get the Equation~\ref{Emain1}.
\end{proof}

\section{$z$-classes in $D_n$}\label{$z$-Classes of $D_n$}
The Weyl group $D_n$ of type $D_n$ is a subgroup of $C_2 \wr  S_n$ and $[C_2\wr S_n; D_n]=2.$ An element $\bf{a_\sigma} = [a_1,\ldots, a_n; \sigma]$ of $C_2\wr S_n$ lies in $D_n$ if and only if, $\Pi_{i=1}^n a_i =1.$ For more details, One may refer to \cite{Ca, LFV}. 

\subsection{Conjugacy classes in $D_n$ : }\label{ConjugacyClassesW(Dn)}
It is natural to think conjugacy classes in $D_n$ in terms of that of $C_2 \wr S_n$ It is clear that a conjugacy class of $C_2 \wr S_n$ lies in $D_n$ if and only if its associated signed partition has even number of negative parts counting with multiplicity. Like every index 2 subgroup of a group, there are two types of conjugacy classes in $D_n.$ Let $\bf{a_\sigma}$ be a representative of a conjugacy class, corresponding to a signed partition $\bmf{\bar{\lambda}}$, of $C_2 \wr S_n$. Suppose ${\bf a_\sigma}\in D_n.$ 
\begin{enumerate}
    \item Split conjugacy class : A conjugacy class $\bf{a_\sigma}$ of $C_2\wr S_n$ splits into two conjugacy classes in $D_n$ if and only if all parts of its corresponding signed partition are even parts with positive signs, which happens if and only if $Z_{C_2 \wr S_n}({\bf a_\sigma})= Z_{D_n}({\bf a_\sigma}).$
    \item Non-split conjugacy class : The conjugacy class of $\bf{a_\sigma}$ of $C_2 \wr S_n$ remains a single conjugacy class in $D_n$ if and only if its corresponding partition contains at least one odd part or at least one even part with negative sign (or both), which happens if and only if $Z_{D_n}({\bf a_\sigma}) \subsetneq Z_{C_2 \wr S_n}({\bf a_\sigma}).$
\end{enumerate}
For example, out of five conjugacy classes of $C_2 \wr S_2$ corresponding to signed partitions $11, 1\bar{1}, \bar{1}\bar{1}, 2, \bar{2},$ only three conjugacy classes lies in $D_2,$ and these three conjugacy classes correspond to signed partitions $11, \bar{1}\bar{1}, 2.$ Out of these three conjugacy classes, the only one conjugacy class corresponding to signed partition $2$ splits into two conjugacy classes in $D_n.$

If $n$ is an odd number, it is clear from the above discussion that all conjugacy classes of $D_n$ are non-split conjugacy classes. The following subsection gives the $z$-classes of $D_n,$ when $n$ is odd.

\subsection{$z$-classes in $D_n,$ when $n$ is odd :} In this subsection, $n$ is an odd number.
\begin{lemma}\label{LemmaOddWDn}
Let ${\bf x,y}\in D_n.$ The elements ${\bf x,y}$ are $z$-conjugate in $D_n$ if and only they are $z$-conjugate in $C_2 \wr S_n.$
\end{lemma}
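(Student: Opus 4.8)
The plan is to prove both directions, using the index-2 structure of $D_n$ in $C_2 \wr S_n$ together with the lemmas from the preliminaries. Since $n$ is odd, every conjugacy class of $C_2 \wr S_n$ that meets $D_n$ is a non-split class of $D_n$ (as noted just before the statement), because any signed partition with an even number of negative parts must, by parity with $n$ odd, contain at least one odd part. Hence for all $\mathbf{x}\in D_n$ we have $Z_{D_n}(\mathbf{x}) = Z_{C_2\wr S_n}(\mathbf{x})\cap D_n$ and $\lvert Z_{C_2\wr S_n}(\mathbf{x})\rvert = 2\lvert Z_{D_n}(\mathbf{x})\rvert$.

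For the ``only if'' direction (z-conjugate in $D_n$ implies z-conjugate in $C_2\wr S_n$), I would invoke Lemma~\ref{Lemma2_index2_converse} with $G = C_2\wr S_n$ and $H = D_n$. Its hypothesis is $Z(Z_G(\mathbf{x}))\cap H = Z(Z_H(\mathbf{x}))$, and this is exactly what Proposition~\ref{Regarding_center_of_centralizers} delivers: writing the centralizer of $\mathbf{x}$ via Theorem~\ref{ThOddEven} as a product over the parts of the underlying partition $\Lbd$, the condition $\lvert Z_H(\mathbf{h}_i)\cap \mathbf{G}_i\rvert < \lvert Z_{G_i}(h_i)\rvert$ for at least two distinct indices needs to be checked. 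Because $n$ is odd, $\Lbd$ has at least one odd part, and in fact the constraint $\sum l_i k_i = n$ with $n$ odd forces enough odd parts (with multiplicity) that the relevant ``$[G:H]$-type'' drop in the centralizer happens in at least two coordinate blocks — this is the place I'd have to argue carefully, distinguishing the case of a single odd part of odd multiplicity $\geq 3$ from the case of two or more odd parts. Once the hypothesis of Proposition~\ref{Regarding_center_of_centralizers} is verified, Lemma~\ref{Lemma2_index2_converse} gives z-conjugacy in $C_2\wr S_n$ directly.

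For the ``if'' direction (z-conjugate in $C_2\wr S_n$ implies z-conjugate in $D_n$), since all classes of $D_n$ are non-split here, I would apply Lemma~\ref{Lemma1_index2}: if $\mathbf{x},\mathbf{y}\in D_n$ lie in non-split classes and are z-conjugate in $G = C_2\wr S_n$, then they are z-conjugate in $H = D_n$. That is immediate from the cited lemma; one only needs to observe that $\mathbf{x}$ and $\mathbf{y}$ do lie in (possibly equal) non-split classes, which is automatic when $n$ is odd.

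The main obstacle I anticipate is the verification step in the first direction: showing that the hypothesis ``$\lvert Z_H(\mathbf{h}_i)\cap \mathbf{G}_i\rvert < \lvert Z_{G_i}(h_i)\rvert$ for at least two distinct $i$'' of Proposition~\ref{Regarding_center_of_centralizers} always holds for an element of $D_n$ when $n$ is odd. The subtle case is an element whose underlying partition has exactly one odd part appearing with odd multiplicity and all other parts even (or the odd part with multiplicity one) — there one must decompose that single ``odd-part block'' further into sub-blocks and check that the centralizer-index drop still occurs in at least two of the resulting coordinates of the product decomposition, using that an odd part of length $k\geq 1$ with the trivial cycle product still contributes a factor where passing to $D_n$ halves the centralizer. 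Handling this bookkeeping cleanly, rather than the two lemma invocations themselves, is where the real work lies.
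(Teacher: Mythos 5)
Your ``if'' direction is exactly the paper's: when $n$ is odd every conjugacy class of $D_n$ is non-split, so Lemma~\ref{Lemma1_index2} applies verbatim. The gap is in the ``only if'' direction. You route it through Lemma~\ref{Lemma2_index2_converse}, whose hypothesis $Z(Z_G(\mathbf{x}))\cap H=Z(Z_H(\mathbf{x}))$ you propose to verify with Proposition~\ref{Regarding_center_of_centralizers}; but that proposition needs the ambient group to decompose as a direct product with the index-two centralizer drop occurring in at least \emph{two distinct} coordinate blocks, and this genuinely fails for, e.g., the class $n^1$ (a single $n$-cycle) or more generally $k^l$ with $kl=n$, where the centralizer $(C_2\times C_k)\wr S_l$ sits in a single block and there is no second coordinate to play against the first. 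Your suggested repair --- subdividing the single odd-part block into sub-blocks --- does not fit the proposition's framework either, because the $S_l$-part of the centralizer permutes the equal-length cycles, so the centralizer is not a direct product over individual cycles. You flag this yourself as ``where the real work lies,'' and that work is not done; as written the argument does not close.

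The paper sidesteps all of this with a short observation you did not find: since $n$ is odd, the central element ${\bf i}=[-{\bf 1}_n;(1)]$ of $C_2\wr S_n$ lies outside $D_n$. If $Z_{D_n}(\mathbf{x})={\bf g}Z_{D_n}(\mathbf{y}){\bf g}^{-1}=Z_{D_n}({\bf gyg}^{-1})$ for some ${\bf g}\in D_n$, then ${\bf i}\in\bigl(Z_{C_2\wr S_n}(\mathbf{x})\cap Z_{C_2\wr S_n}({\bf gyg}^{-1})\bigr)\setminus D_n$, and Lemma~\ref{Lemma1_index2_converse} immediately gives $Z_{C_2\wr S_n}(\mathbf{x})=Z_{C_2\wr S_n}({\bf gyg}^{-1})={\bf g}Z_{C_2\wr S_n}(\mathbf{y}){\bf g}^{-1}$. (The same central element would also let you verify the hypothesis of Lemma~\ref{Lemma2_index2_converse} uniformly, since $Z_{C_2\wr S_n}(\mathbf{x})=Z_{D_n}(\mathbf{x})\cup{\bf i}\,Z_{D_n}(\mathbf{x})$ with ${\bf i}$ central; but your actual proposal leans on Proposition~\ref{Regarding_center_of_centralizers}, which is not applicable in the single-block cases.)
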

\begin{proof}
Since all conjugacy classes of $D_n$ are non-split conjugacy classes. It follows from Lemma \ref{Lemma1_index2}, if the elements ${\bf x,y}$ are $z$-conjugate in $C_2 \wr S_n,$ then they are $z$-conjugate in $D_n.$
    Consider ${\bf x,y}$ are $z$-conjugate in $D_n,$ then 
    $Z_{D_n}({\bf x})= {\bf g}Z_{D_n}({\bf y}){\bf g}^{-1}=Z_{D_n}({\bf gyg^{-1}})$ for some ${\bf g}\in D_n.$ Let ${\bf i}=[-{\bf 1}_n, (1)],$ using ${\bf i}\in Z(C_2 \wr S_n)\setminus D_n,$  we get 
    ${\bf i}\in (Z_{C_2 \wr S_n}({\bf x})\cap Z_{C_2 \wr S_n}({\bf gyg^{-1}}))\setminus D_n.$
Now using lemma \ref{Lemma1_index2_converse}, we get $Z_{C_2 \wr S_n}({\bf x})= Z_{C_2 \wr S_n}({\bf gyg^{-1}})= {\bf g}Z_{C_2 \wr S_n}({\bf y}){\bf g}^{-1},$ and hence ${\bf x}$ and ${\bf y}$ are $z$-conjugate in $C_2 \wr S_n$.

\end{proof}

 \begin{proposition}\label{PropOddWD1}
Each $z$-class in $C_2 \wr S_n$ contains a conjugacy class of $D_n$. 
\end{proposition}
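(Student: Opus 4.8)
The plan is to show that every $z$-class of $C_2 \wr S_n$ (with $n$ odd) has a representative conjugacy class all of whose negative parts are odd — hence automatically has an even number of negative parts counting multiplicity, so that class lies in $D_n$. Recall from Theorem~\ref{ZThOddEven} that a $z$-class of $C_2 \wr S_n$ with underlying partition $\Lbd = k_1^{l_1}\cdots k_u^{l_u}m_1^{r_1}\cdots m_v^{r_v}$ is determined, for the odd parts $k_i$, only by the unordered pair $\{s_i, l_i - s_i\}$ of how many copies carry a positive sign versus a negative sign (since by Proposition~\ref{prop_kl} and Proposition~\ref{Prop_overline_kl}(1) the centralizers of $k_i^{s_i}\overline{k_i}^{t_i}$ and $k_i^{t_i}\overline{k_i}^{s_i}$ are conjugate), while for the even parts $m_j$ the number $y_j$ of negative copies is a genuine invariant of the $z$-class.

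First I would fix a $z$-class $\mathcal{Z}$ of $C_2 \wr S_n$. Since $n$ is odd, its underlying partition $\Lbd$ cannot consist solely of even parts; in fact, more importantly, I claim $\Lbd$ must have at least one odd part with odd multiplicity, or more simply: the sum $\sum_j r_j m_j$ of the even-part contributions is even, so $\sum_i l_i k_i = n - \sum_j r_j m_j$ is odd, forcing some $l_i k_i$ odd, hence some odd part $k_i$ with odd multiplicity $l_i$. Now within $\mathcal{Z}$ I get to choose the sign data freely subject to the $z$-class constraint. For each even part $m_j$, the value $y_j$ (number of negative copies) is forced by $\mathcal{Z}$; let $N_{\mathrm{even}} = \sum_j y_j$ be the resulting count of negative even parts. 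For each odd part $k_i$, I am free to pick the number $t_i$ of negative copies to be either $s_i$ or $l_i - s_i$. The total negative-part count is $N_{\mathrm{even}} + \sum_i t_i$, and I must make this even. Since I can flip each $t_i$ between $s_i$ and $l_i - s_i$, and these two values differ by $l_i - 2s_i$, for any index $i$ with $l_i$ odd the two choices of $t_i$ have opposite parities — so that single odd-multiplicity odd part gives me a free parity bit. Using the odd part $k_i$ with $l_i$ odd identified above, I can therefore adjust the parity of $\sum_i t_i$ to make $N_{\mathrm{even}} + \sum_i t_i$ even. The resulting signed partition has an even number of negative parts, so the corresponding conjugacy class of $C_2 \wr S_n$ lies in $D_n$, and it lies in $\mathcal{Z}$ by construction.

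The main obstacle — and really the only subtlety — is verifying that the parity-flip I just described stays inside the same $z$-class $\mathcal{Z}$; this is precisely the content of Theorem~\ref{ZThOddEven} applied to the odd parts, so I would cite that carefully rather than re-derive it. A secondary point to state cleanly is the arithmetic observation that $n$ odd forces an odd part of odd multiplicity, which is immediate from reducing $\sum_i l_i k_i + \sum_j r_j m_j = n$ modulo $2$ and noting each $r_j m_j$ is even. I expect the whole argument to be short once these two facts are in place, with no genuine computation required.
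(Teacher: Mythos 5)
Your proposal is correct and follows essentially the same route as the paper: both locate an odd part $k_p$ of odd multiplicity (forced by $n$ being odd), then invoke Theorem~\ref{ZThOddEven} to swap $s_p \leftrightarrow t_p$, which flips the parity of the total number of negative parts and places a conjugacy class of the given $z$-class inside $D_n$. One small caution: your opening sentence misstates the plan (a representative all of whose negative parts are odd need not have evenly many negative parts, and the $y_j$ for even parts are fixed by the $z$-class in any case), but the actual argument in your second paragraph is the correct one and matches the paper's.
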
 
\begin{proof}
Consider the $z$-class in $C_2 \wr S_n$ corresponding to signed partition $$\bmf{\bar\lambda} =k_1^{s_1}\overline{k_1}^{t_1}\cdots k_u^{s_u}\overline{k_u}^{t_u}m_1^{x_1}\overline{m_1}^{y_1}\cdots m_v^{x_v}\overline{m_v}^{y_v}, $$ where $k_1 < \cdots <k_u$ are odd, and $m_1 < \cdots < m_v$ are even, and for all $1 \leq i \leq u$, we have $s_i \geq t_i$. The total number of negative parts in the signed partition $\bmf{\bar\lambda}$ is $N_0=\sum_{i=1}^u t_i + \sum_{j=1}^v y_j$. Now, if $N_0$ is an even number, then the conjugacy class corresponding to signed partition $\bmf{\bar\lambda}$ is a conjugacy class in $D_n$. 

Suppose $N_0$ is an odd number. Now, as $n$ is odd, there exist $p$ such that $1\leq p \leq u$, such that $s_{p}+t_{p}$ is odd. Now, by Theorem~\ref{ThOddEven}(2), the conjugacy class in $C_2 \wr S_n$ corresponding to the signed partition
$$\bmf{\bar\lambda^\prime} =k_1^{s_1}\overline{k_1}^{t_1}\cdots k_{p}^{p}\overline{k_{p}}^{s_{p}}\cdots k_u^{s_u}\overline{k_u}^{t_u}m_1^{x_1}\overline{m_1}^{y_1}\cdots m_v^{x_v}\overline{m_v}^{y_v},$$
is in the same $z$-class as the conjugacy class  corresponding to signed partition $\bmf{\bar\lambda}$. The number of negative parts of $\bmf{\bar\lambda^\prime}$ is equal to $N_0 + (s_{p}-t_{p})$, is an even number. Therefore, the conjugacy class of $C_2 \wr S_n$ corresponding to the signed partition $\bmf{\bar\lambda^\prime}$ is a conjugacy class of $D_n$.

\end{proof}

\begin{remark}\label{Remark_odd_D_n}
From the Lemma \ref{LemmaOddWDn} and Proposition \ref{PropOddWD1}, we get the number of $z$-classes in $D_n, $ is same as those in $C_2 \wr S_n.$
\end{remark}

\subsection{$z$-classes in $D_n,$ when $n$ is even :} In this subsection, $n$ is an even number. In this case, $D_n$ has both split and non-split conjugacy classes. 
Let $C$ be a conjugacy class in $C_2 \wr S_n,$ and $C$ splits into two conjugacy classes $C_1$ and $C_2$ in $D_n.$ The following theorem gives the condition when the conjugacy classes $C_1$ and $C_2$ are $z$-conjugate in $D_n.$ We recall that split conjugacy classes in $D_n$ correspond to signed partitions with all even parts with positive signs. It follows from the Lemma \ref{Lemma_split}, that if two elements belonging to two distinct split conjugacy classes of $D_n,$ are $z$-conjugate to each other, then they are conjugate in $C_2\wr S_n.$

\begin{theorem}\label{Th_split_CC_in_D_n}
Two split conjugacy classes in $D_n$ corresponding to the signed partition $\bmf{\bar{\lambda}} = m_1^{x_1}\cdots m_v^{x_v}$, where $2\leq m_1 < \cdots m_v\leq n$ are even, are $z$-conjugate if and only if there is a part $m_j = 4q_j + 2$ for some integer $q_j$, whose multiplicity $x_j$ is odd.
\end{theorem}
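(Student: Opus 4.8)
The plan is to analyze when the two $D_n$-conjugacy classes $C_1, C_2$ into which a split class $C \subset C_2 \wr S_n$ decomposes become $z$-conjugate inside $D_n$. Fix a representative $\bmf{a}_\sigma$ of $C$; by the discussion preceding Lemma~\ref{Lemma_split}, $Z_{D_n}(\bmf{a}_\sigma) = Z_{C_2 \wr S_n}(\bmf{a}_\sigma)$, and by Theorem~\ref{ThOddEven} this common centralizer is a direct product $\prod_{j=1}^v Z_{C_2 \wr S_{x_j m_j}}(m_j^{x_j})$, each factor being $(C_2 \times C_{m_j}) \wr S_{x_j}$ by Proposition~\ref{prop_kl}. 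Since $C_1$ and $C_2$ lie in the same class $C$ of the ambient group, they are automatically $z$-conjugate in $C_2 \wr S_n$; the question is whether a conjugating element can be chosen inside $D_n$. Equivalently, $C_1$ and $C_2$ are $z$-conjugate in $D_n$ if and only if the normalizer $N_{C_2 \wr S_n}(Z_{D_n}(\bmf{a}_\sigma))$ is \emph{not} contained in $D_n$, i.e. some element outside $D_n$ normalizes the centralizer. So the first step is to reduce the theorem to: \emph{the centralizer $Z = Z_{C_2 \wr S_n}(\bmf{a}_\sigma)$ has a normalizing element of negative total sign if and only if some part $m_j \equiv 2 \pmod 4$ has odd multiplicity $x_j$.}

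Next I would make this normalizer computation concrete. A convenient choice of representative for $C$ is $\bmf{a}_\sigma = [{\bf 1}_n; \sigma]$ where $\sigma$ is a product of $x_j$ cycles of length $m_j$. An element $[h_1, \ldots, h_n; \tau]$ normalizes $Z$; conjugating $Z$ by it and using the explicit description of $Z$ from Proposition~\ref{prop_kl} (elements are constant on each $m_j$-cycle block, up to the $Z_{S_n}$-part), the condition to normalize $Z$ forces $\tau \in N_{S_n}(Z_{S_n}(\sigma))$ — which, since $Z_{S_n}(\sigma)$ is self-normalizing here up to the relevant data, means $\tau$ permutes the cycle blocks of equal length and acts within each block by a power of the cycle. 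The interesting data is the sign vector $(h_1, \ldots, h_n)$. Conjugating the generator $[{\bf -1}_{m_j} \text{ on one block}; \sigma_{m_j}]$ of a $C_2$-direct-factor of $Z$ by $[h_1,\ldots,h_n;\tau]$ changes its cycle product by a factor depending on $h$ restricted to that block and on the cyclic shift $\tau$ performs. The key local computation is: on a single $m_j$-cycle, conjugating by a sign vector $\bmf{h}$ and a cyclic shift multiplies the cycle product by $\prod (\text{something})$ that telescopes; one finds the cycle product of $[h_1, \ldots; \pi_k^{m}] [\,\ldots; \sigma_k] [\,\ldots]^{-1}$ picks up exactly $(h_1 \cdots h_k)$-type contributions that cancel, so a \emph{single} block contributes a sign flip to the total sign precisely when $m_j$ is odd — but $m_j$ is even here, so instead the parity bookkeeping is subtler and comes from how many blocks of a given even length are present.

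The crucial step — and the main obstacle — is the parity argument tying "$m_j \equiv 2 \pmod 4$ with odd multiplicity" to the existence of a negative-sign normalizing element. I expect the mechanism to be exactly as in the dihedral-group computation of Theorem~\ref{z-classes-Dihedral-group}: for a single cycle of length $m$, the element $a^{m/2}$ (in the $C_m$ part) is central in the centralizer, and conjugating the "$\overline{m}$" generator by an element with cyclic shift by $m/2$ flips the cycle product when $m/2$ is odd (i.e. $m \equiv 2 \pmod 4$) but not when $m/2$ is even. When a part $m_j \equiv 2 \pmod 4$ occurs with odd multiplicity $x_j$, performing this flip simultaneously on all $x_j$ blocks changes the total sign by $(-1)^{x_j} = -1$, producing a normalizing element outside $D_n$, hence $C_1 \sim_z C_2$; when every $m_j \equiv 2 \pmod 4$ has even multiplicity (and parts $\equiv 0 \pmod 4$ never produce a flip at all), every normalizing element preserves total sign parity, so it lies in $D_n$ and cannot merge $C_1$ with $C_2$. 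So the plan is: (i) reduce to the normalizer-escaping-$D_n$ criterion; (ii) set up the explicit conjugation on sign vectors using Proposition~\ref{prop_kl} and Equation~\eqref{equation_action}; (iii) do the single-cycle parity computation à la Theorem~\ref{z-classes-Dihedral-group}; (iv) assemble the blocks and track the total sign, where the $(-1)^{x_j}$ factor delivers exactly the stated odd-multiplicity condition.

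One should also note for completeness that when $C_1 \not\sim_z C_2$ one must verify they genuinely give two $z$-classes rather than accidentally merging with the $z$-class of a \emph{different} underlying partition; but Lemma~\ref{S_n-parts} (their $S_n$-parts being conjugate is necessary) together with the fact that $C_1, C_2$ already share an $S_n$-part with no other relevant class confines the comparison to within this single split class, so no extra work is needed there.
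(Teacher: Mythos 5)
Your opening reduction is correct and is a clean reformulation of what the paper does with explicit elements: for a split class $C=C_1\cup C_2$ with representative $\bmf{a}$, one has $Z_{D_n}(\bmf{a})=Z_{C_2\wr S_n}(\bmf{a})$, and $C_1\sim_z C_2$ in $D_n$ if and only if $N_{C_2\wr S_n}(Z_{D_n}(\bmf{a}))\not\subseteq D_n$. The gap is that everything after this reduction --- which is where the entire content of the theorem lives --- is only a plan, and the mechanism you sketch for it is misidentified. The centralizer of $\bmf{a}=[\bmf{1}_n;\sigma]$ consists of elements whose sign vectors are \emph{constant} on each cycle block, so every element of it has cycle products $(+1)^{m_j}=+1$; there is no ``$\overline{m}$ generator'' inside this centralizer whose cycle product could be flipped, and the dihedral-style ``cyclic shift by $m/2$'' is not the relevant normalizing element. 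The element that actually witnesses the ``if'' direction is the \emph{alternating} sign vector $g=[h;(1)]$ with $h_{\sigma(i)}=-h_i$ on each $m_1$-block and $h\equiv 1$ elsewhere: it conjugates $\bmf{a}$ to $\bmf{b}=[\bmf{-1}_{x_1m_1},\bmf{1}_{n-x_1m_1};\sigma]$, which has the \emph{same} centralizer as $\bmf{a}$ (the extra sign factor is constant on all $m_1$-blocks, hence central in $Z_{C_2\wr S_n}(\bmf{a})$), and the number of $-1$'s in $g$ is $x_1\cdot m_1/2$, which is odd exactly when $x_1$ and $m_1/2$ are both odd. You land on the right parity $(-1)^{x_jm_j/2}$ only after an ``I expect the mechanism to be'' step, which is not a proof.

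The more serious omission is the converse. Your argument there is ``every normalizing element preserves total sign parity,'' but you have only examined normalizing elements of one special shape (sign flips on blocks). A general element of $N_{C_2\wr S_n}(Z)$ has the form $[h;\tau]$ with $\tau\in N_{S_n}(Z_{S_n}(\sigma))$, and $Z_{S_n}(\sigma)=\prod_j C_{m_j}\wr S_{x_j}$ is \emph{not} self-normalizing in $S_n$ (already for a single $4$-cycle its normalizer is dihedral of order $8$), so there are normalizing elements with nontrivial, non-centralizing permutation parts whose signs you have not controlled. This is exactly the step the paper spends most of its proof on: it uses that any conjugating element $\bmf{g}$ must carry $Z(Z_{D_n}(\bmf{a}))$ to $Z(Z_{D_n}(\bmf{b}))$, deduces $\bmf{g}\bmf{a}\bmf{g}^{-1}=\bmf{b}^w$ with $(w,m_1)=1$, and then solves the resulting equations to show the sign vector of $\bmf{g}$ necessarily contains an odd number of $-1$'s. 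Without an argument of this kind (or a genuine computation of the full normalizer), the ``only if'' direction is unproved. Your final paragraph about merging with other underlying partitions addresses a non-issue for this theorem, since the statement only compares $C_1$ with $C_2$.
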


\begin{proof}
 Let $ \bmf{\bar{\lambda}}=m_1^{x_1}\cdots m_v^{x_v}$ with all even parts. Suppose there is $m_j = 4q_j + 2$, with multiplicity $x_j$ being odd. We shall reorder the parts of $\lbd$ so that $m_1 = 4q_1 + 2$, with odd multiplicity $x_1$. 

Let $\sigma = (1, \dots, m_1)\dots((x_1-1)m_1+1,\dots,x_1m_1)\dots(n-m_v+1,\dots,n)$
 and ${\bf a} = [{\bf 1}^n; \sigma]$ be an element of conjugacy class of $C_2 \wr S_n$ corresponding to signed partition $\bmf{\bar{\lambda}}.$
 Now consider the element ${\bf b} = [\bmf{-1}_{x_1m_1},\bmf{1}_{n-x_1m_1}; \sigma]$. We claim that ${\bf a}$ and ${\bf b}$ are not conjugate in $D_n.$

 Let ${\bf h} = [h_1,\ldots, h_n; \tau] \in C_2\wr S_n$ such that ${\bf hah^{-1}} = {\bf b}$, then $\tau \in Z_{S_n}(\sigma)$ and we have: $h_i h_{\sigma^{-1}(i)} = -1$, for $1 \leq i \leq x_1m_1$ and $h_ih_{\sigma^{-1}(i)} = 1$ for the remaining $i$'s. As $x_1$ and $\frac{m_1}{2}$ are odd, by a routine check, we can see that ${\bf h} \notin D_n$.

 As we are dealing with a split classes, we know that ${Z}_{D_n}({\bf a}) = {Z}_{C_2\wr S_n} ({\bf a})$. 

 Now, we know that 
 \begin{eqnarray*}
{Z}_{D_n}({\bf b}) &=& {Z}_{C_2 \wr S_n}({\bf b}) \\
&=& {Z}_{C_2\wr S_{r_1m_1}([{\bf -1}_{x_1m_1}; \sigma_{m_1.x_1}]}) \times \prod_{i = 2}^v {Z}_{C_2 \wr S_{x_im_i}}
([{\bf 1}_{x_im_i}; \sigma_{m_i.x_i}])\\
&=& {Z}_{C_2\wr S_{r_1m_1}([{\bf 1}_{x_1m_1}; \sigma_{m_1.x_1}]}) \times \prod_{i = 2}^v {Z}_{C_2 \wr S_{x_im_i}}
([{\bf 1}_{x_im_i}; \sigma_{m_i.x_i}])\\
&=& {Z}_{C_2 \wr S_n}({\bf a}) \\
&=& {Z}_{D_n}({\bf a})
\end{eqnarray*}
Thus ${\bf a}$ and ${\bf b}$ are $z$-conjugate in $D_n$.

Now, we consider the signed partition $\bmf{\bar{\lambda}} =m_1^{x_1}\cdots m_v^{x_v}$ corresponding to two split conjugacy classes in $D_n$, where all the parts of the type $m_j = 4q_j + 2$ have even multiplicity $x_j$. Here, we take ${\bf a} = [{\bf 1}_n; \sigma]$ and ${\bf b} = [{\bf -1}_2, {\bf 1}_{n-2}; \sigma]$. One can check that these two elements are not conjugate in $D_n.$

We know that: 
\begin{eqnarray*}
 {Z}_{D_n}({\bf b}) &=& {Z}_{C_2 \wr S_n}({\bf b}) \\
&=&{Z}_{C_2\wr S_{x_1m_1}}([{\bf -1}_2,{\bf 1}_{x_1m_1-2}; \sigma_{m1.x1}]) \times \prod_{i = 2}^v {Z}_{C_2 \wr S_{x_im_i}}([{\bf 1}_{x_im_i}; \sigma_{m_i.x_i}])
\end{eqnarray*}

Suppose ${\bf a}$ and ${\bf b}$ are $z$-conjugate in $D_n.$ We have ${\bf g}{Z}_{D_n}({\bf a}){\bf g}^{-1} = {Z}_{D_n}({\bf b})$ for some ${\bf g} \in D_n$
This implies
\begin{eqnarray*}{\bf g}Z({Z}_{D_n}({\bf a})){\bf g}^{-1}& =& Z({Z}_{D_n}({\bf b}))\\
\Rightarrow{\bf g}\prod_{i=1}^v\langle[{\bf 1}_{x_im_i}; \sigma_{m_i.x_i}],[{\bf -1}^n; (1)]\rangle{\bf g}^{-1} &=& \langle[{\bf -1}_2,{\bf 1}_{x_1m_1-2}; \sigma_{m_1.x_1}],[{\bf -1}_{x_1m_1}, (1)]\rangle\\&~& \times \prod_{i=2}^v\langle[{\bf 1}_{x_im_i}; \sigma_{m_i.x_i}],[{\bf -1}_{n-m_1x_1}; (1)]\rangle
\end{eqnarray*}
Therefore, we get ${\bf g}[{\bf 1}_n; \sigma]{\bf g}^{-1} = [{\bf -1}_2, {\bf 1}_{n-2}; \sigma]^w={\bf b}^w,$ where $(w, o(\bmf{a})) = 1$, in particular, $(w, m_1) = 1$. 

Now, let ${\bf g}=[g_1,\ldots,g_n; \tau]$, where $\tau\sigma\tau^{-1} = \sigma^w,$ 
and we compute 

${\bf b}^w = [-1,{\bf 1}_{w-1},-1,{\bf 1}_{n-w-1}; \sigma^w].$

On equating ${\bf gag}^{-1} = {\bf b}^w$, for $1 \leq i \leq m_1,$ we get
\begin{eqnarray*}
g_1g_{m_1 -w +1} &=& -1\\
g_2g_{m_1 -w + 2} &=& 1\\
\vdots &=& \vdots\\
g_{w+1}g_1 &=& -1\\
\vdots &=&\vdots\\
g_{m_1}g_{m_1 -w} &=& 1
\end{eqnarray*}

We recall that $\sigma_{m_1}^w$ denotes the $(12\cdots m_1)^w.$ As $\sigma_{m_1}^w= (1\sigma_{m_1}^w(1) \cdots \sigma_{m_1}^{w(m_1-1)}(1)),$ we have
$g_{\sigma_{m_1}^{w(m_1-1)}(1)} =g_{\sigma_{m_1}^{w(m_1-2)}(1)} = \cdots = g_{\sigma_{m_1}^w(1)} = -g_1$. 

But  $\{\sigma_{m_1}^w(1), \ldots, \sigma_{m_1}^{w(m_1-1)}(1)\} = \{2,3,\ldots, m_1\}$ we get that $-g_1 = g_2 = \cdots = g_{m_1}.$

For $m_1 + 1 \leq j \leq n$, we have $g_{j}g_{\sigma^{-w}(j)} = 1$. Thus, the element ${\bf g}$ is of the form $${\bf g}= [-d^{(1)}_{1},{\bf d^{(1)}_{1}}_{m_1 - 1},{\bf d^{(1)}_2}_{m_1},\ldots,{\bf d^{(1)}_{x_1}}_{m_1},\cdots, {\bf d^{(v)}_{x_v}}_{m_v}; \tau] ,$$ where $d^{(i)}_j,= \pm 1$, for $1 \leq i \leq v$ and $1 \leq j \leq x_i$. The total number of $-1$ in ${\bf g}$ is either $2q + 1$ or $2q + (m_1 -1)$, which is odd. This implies ${\bf g} \notin D_n$. Hence ${\bf a}$ and ${\bf b}$ are  not $z$-conjugate in $D_n.$ 
\end{proof}

The above theorem implies that if $n=4k+2$ for some $k\in \mathbb N,$ and $C$ is a conjugacy class in $C_2 \wr S_n$ which  splits into two conjugacy classes $C_1$ and $C_2$ in $D_n,$ then $C_1$ and $C_2$ are always $z$-conjugate in $D_n.$ However this does not happen if $n=4k$ for some $k\in \mathbb N.$ For example, the two split conjugacy classes corresponding to signed partition $4$ in $D_4$ are not $z$-conjugate to each other in $D_4.$

Now we shift our attention to the non-split conjugacy classes in $D_n.$ Let $C_1$ and $C_2$ be two non-split conjugacy classes in $D_n.$ Let ${\bf x}\in C_1$ and ${\bf y}\in C_2, $ using Lemma \ref{Lemma1_index2}, we get that if ${\bf x}$ and ${\bf y}$ are $z$-conjugate in $C_2\wr S_n,$ then they are also $z$-conjugate in $D_n.$ The converse is also true in the case of elements belonging to non-split conjugacy classes. The following lemma is the first step in the direction of proving the converse.

\begin{lemma}\label{Center_of_centralizer_in_WDn} Let ${\bf a} \in D_n.$ Then
 $Z(Z_{(D_n}({\bf a}))=Z(Z_{C_2\wr S_n}({\bf a})) \cap D_n$
 unless signed  partition corresponding to the conjugacy class of ${\bf a}$ in $C_2 \wr S_n$ is of type $1^2m_1^{x_1}m_2^{x_2}\dots m_v^{x_v}$ or $\bar{1}^2m_1^{x_1}m_2^{x_2}\dots m_v^{x_v},$ where $m_i$ are even parts with positive sign for all $1\leq i\leq v.$
\end{lemma}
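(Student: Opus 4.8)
The plan is to apply Proposition~\ref{Regarding_center_of_centralizers} with $G = C_2\wr S_n$, $H = D_n$, and the direct-product decomposition of $G$ coming from the signed cycle type of $\bmf{a}$. Concretely, if the signed partition of $\bmf{a}$ is $\bmf{\bar\lambda} = k_1^{s_1}\overline{k_1}^{t_1}\cdots m_v^{x_v}\overline{m_v}^{y_v}$, then by Theorem~\ref{ThOddEven} the centralizer $Z_{C_2\wr S_n}(\bmf{a})$ is conjugate to a direct product of factors, each of the form $Z_{C_2\wr S_{r}}(k^r)$ or $Z_{C_2\wr S_r}(\overline{k}^r)$, and these factors play the role of the $G_i$ in the Proposition. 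The hypothesis of Proposition~\ref{Regarding_center_of_centralizers} that needs checking is: for at least two distinct indices $i$, the ``local'' centralizer $Z_{D_n}(\bmf{h_i})\cap \bmf{G_i}$ is strictly smaller than $Z_{G_i}(h_i)$ — equivalently, the element $\bmf{h_i}$ (an element supported on a single block, coming from one part of the signed partition together with its sign) does not lie entirely inside $D_n$'s centralizer, i.e.\ some element of $Z_{G_i}(h_i)$ sits outside $D_n$.

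First I would translate this ``local'' condition into a statement purely about the parts of $\bmf{\bar\lambda}$. For a block corresponding to a part $k$ with positive sign, a generator of the block's centralizer includes the sign-flip element $[\bmf{-1}_k;\mathrm{id}]$ on that block; this element has $\prod = (-1)^k$, so it lies outside $D_n$ exactly when $k$ is odd. For a part $k$ with negative sign, one checks using Propositions~\ref{prop_kl} and~\ref{Prop_overline_kl}: when $k$ is odd the centralizer again contains an all-sign-flip generator lying outside $D_n$; when $k$ is even (with negative sign), by Proposition~\ref{Prop_overline_kl}(2) the centralizer is isomorphic to $C_{2k}\wr S_l$, and one can exhibit a generating element with an odd number of $-1$ entries, so again the block's centralizer is not contained in $D_n$. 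The only blocks whose centralizer lies entirely inside $D_n$ are the ones coming from even parts with positive sign (these are exactly the ``split'' situations). Hence $|Z_{D_n}(\bmf{h_i})\cap\bmf{G_i}| < |Z_{G_i}(h_i)|$ fails only for an even-positive part, and it holds for every block arising from an odd part or an even negative part.

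Therefore the hypothesis of Proposition~\ref{Regarding_center_of_centralizers} holds — and the conclusion $Z(Z_{D_n}(\bmf{a})) = Z(Z_{C_2\wr S_n}(\bmf{a}))\cap D_n$ follows — whenever the signed partition $\bmf{\bar\lambda}$ contains at least two parts (counted with multiplicity) that are either odd or even-with-negative-sign. The exceptional cases are precisely those signed partitions in which at most one such part occurs. Since $\bmf{a}\in D_n$ forces the number of negative parts to be even, a lone negative even part is impossible; and a negative part that is odd forces a second odd part (oddly many odd parts cannot sum with even parts to $\ldots$ — more carefully: if the signed partition has an odd part it has at least one, and if it has exactly one odd part that part must have multiplicity one, but then to make the total number of negative parts even combined with the parity constraints one is pushed into the listed shapes). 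Running through this bookkeeping, the only signed partitions with fewer than two qualifying parts are $1^2 m_1^{x_1}\cdots m_v^{x_v}$ and $\overline{1}^2 m_1^{x_1}\cdots m_v^{x_v}$ with all $m_i$ even and positively signed — a single odd part $1$ with multiplicity $2$, carrying matching signs so that $\bmf{a}\in D_n$, contributes two blocks of length $1$ but these are too small to force the containment, while all other blocks are even-positive. This is exactly the stated list of exceptions.

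I expect the main obstacle to be the last paragraph's case analysis: verifying that the ``at most one qualifying part'' condition really does single out exactly the two exceptional shapes, and in particular checking carefully the borderline blocks of length~$1$ (where $Z_{G_i}(h_i) = C_2$ has order $2$ and the intersection with $D_n$ can be all of it), so that these length-$1$ odd blocks genuinely fail to contribute to the ``$<$'' condition while every even negative block or odd block of length $\geq 3$ does contribute. The wreath-product identifications from Proposition~\ref{Prop_overline_kl}(2) will need to be unwound explicitly to count $-1$'s in a generator, but that is routine once the right generator is chosen.
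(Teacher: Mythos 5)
Your first two paragraphs track the paper's argument: the paper likewise invokes Proposition~\ref{Regarding_center_of_centralizers} on the block decomposition of the centralizer coming from Theorem~\ref{ThOddEven}, and your identification of which blocks have centralizer not contained in $D_n$ (odd parts of either sign, and even parts with negative sign) is correct. The gap is in your last paragraph. The hypothesis of Proposition~\ref{Regarding_center_of_centralizers} requires two \emph{distinct direct factors} $G_i\neq G_j$ with $|Z_H(\bmf{h_i})\cap\bmf{G_i}|<|Z_{G_i}(h_i)|$, and the factors here are indexed by (part value, sign): the whole of $k^{s}$ is a single factor isomorphic to $(C_2\times C_k)\wr S_s$, however large $s$ is, because $S_s$ mixes the cycles. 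Counting qualifying \emph{parts with multiplicity}, as you do, is therefore the wrong bookkeeping. A signed partition such as $3^2m_1^{x_1}\cdots m_v^{x_v}$ (or $\overline{3}^{\,2}\cdots$, or $3^4\cdots$) has two qualifying parts but only \emph{one} qualifying factor, so the Proposition does not apply to it, yet it is not on the exception list; conversely $1^2m_1^{x_1}\cdots m_v^{x_v}$ also has two qualifying parts by your count but is a genuine exception, and $3^1m_1^{x_1}\cdots m_v^{x_v}$ has only one qualifying part but is not an exception. So your closing claim that the only signed partitions with fewer than two qualifying parts are $1^2\cdots$ and $\overline{1}^{\,2}\cdots$ fails in both directions, and the remark that the length-one blocks are ``too small to force the containment'' is an assertion, not an argument.

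What is missing is the entire second half of the paper's proof: for the leftover shapes $p^qm_1^{x_1}\cdots m_v^{x_v}$ and $\overline{p}^{\,q}m_1^{x_1}\cdots m_v^{x_v}$ (a single qualifying factor, of arbitrary multiplicity $q$), one must compute $Z(Z_{D_n}(\bmf{a}))$ directly and compare it with $Z(Z_{C_2\wr S_n}(\bmf{a}))\cap D_n$. The paper does exactly this. For $p=1$ or $\overline{1}$ the question reduces to whether $Z(D_q)=Z(C_2\wr S_q)\cap D_q$, which fails precisely for $q=2$, since $D_2$ is abelian of order $4$ while $Z(C_2\wr S_2)\cap D_2$ has order $2$ --- this computation is where the two exceptional shapes actually come from. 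For $p\geq 3$ the paper exhibits, for each candidate non-central element of $Z_{D_{pq}}(\bmf{a})$ outside $\langle[\bmf{1}_{pq};\sigma_{p.q}],[\bmf{-1}_{pq};(1)]\rangle$, an element of $Z_{D_{pq}}(\bmf{a})$ that fails to commute with it, so the two centers agree. Without this analysis your proof neither establishes the equality for the non-exceptional single-factor cases nor explains why $1^2$ and $\overline{1}^{\,2}$ are the only failures.
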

\begin{proof}
 In the view of Subsections \ref{ConjugacyClassesW(Dn)} and Proposition \ref{Regarding_center_of_centralizers}, we that get if the signed partition corresponding to conjugacy class of ${\bf a}$ in $C_2 \wr S_n$ has more than one odd part, or more than one even negative part, or at least one odd part along with at least one even negative part, then we have $Z(Z_{(D_n}({\bf a}))=Z(Z_{C_2\wr S_n}({\bf a})) \cap D_n.$
 
 Hence we are left with the case, when the signed partition corresponding to the conjugacy class of ${\bf a}$ in $C_2 \wr S_n$  is of type $p^qm_1^{x_1}m_2^{x_2}\dots m_v^{x_v},$ where $p$ is either an odd part or an even part with negative sign, with $q$ even, and $m_i$ are even parts with positive signs for all $1\leq i\leq v.$. 
 
 With notations as defined in Remark \ref{Remark_split}, we get
 $$Z_{C_2\wr S_n}({\bf a}) =Z_{C_2\wr S_{pq}}( a_1, \sigma_{p.q}) \times \Pi_{i=1}^v Z_{C_2\wr S_{m_ix_i}}( {\bf 1}_{m_ix_i}, \sigma_{m_i.x_i}),$$
 where $a_i$ is same as ${\bf 1}_{pq}$, or ${\bf -1}_{pq}$, or $\bmf{\bar{1}}_{pq}=(-1,1,\dots,1)$ depending on $p$ is odd part with positive sign, odd part with negative sign, or even part with negative sign. We know that
 $$Z_{C_2\wr S_{m_ix_i}}( {\bf 1}_{m_ix_i}, \sigma_{m_ix_i})= Z_{D_{m_ix_i}}( {\bf 1}_{m_ix_i}, \sigma_{m_i.x_i})$$
 for all $1\leq i\leq v.$ Hence we only need to consider the part $Z_{C_2\wr S_{pq}}( [a_1, \sigma_{p.q}]).$

 We first consider the case when $p=1,$ It is easy to compute
 \[
 Z(D_q)=\begin{cases}
 \{ [{\bf 1}_q, (1)], [{\bf -1}_q, (1)]\} \quad\text{if $q$ is even and $q\neq 2$} \\
 \{[h,h, (1,2)^i]~|~h=\pm1, i=0,1\} \quad\text{if  $q= 2$} \\
 \{ [{\bf 1}_q, (1)] \} \quad\text{if $q$ is odd} \\
 \end{cases}
 \]
 We notice in this case, $Z(D_q)=Z(C_2\wr S_q)\cap D_q$ except when $q=2.$ It gives $Z(Z_{D_q}([{\bf 1}_q;(1)]) )=Z(Z_{C_2\wr S_q}([{\bf 1}_q;(1)])) \cap D_q$ if and only if $q\neq 2.$ 

 Now we consider the case $p=\bar{1},$ in this case $q$ is even as ${\bf a}\in D_n.$ Hence $[{\bf -1}_q;(1)]$ lies in  $Z(C_2\wr S_q)$ as well as in $Z(D_q).$ Therefore again in this case, we get 
 $Z(Z_{D_q}([{\bf -1}_q;(1)]) )=Z(Z_{C_2\wr S_q}([{\bf -1}_q;(1)])) \cap D_q$ if and only if $q\neq 2.$ 
 
When $p \geq 3$, and $q$ is even. Then we know that $Z(Z_{W(C_n)}([a_1,\ldots,a_{pq}; \sigma_{p.q}])) = \langle [a_1,\ldots,a_{pq}; \sigma_{p.q}], [\bmf{-1}_{pq}; (1)] \rangle \subseteq W(D_{pq})$. We claim that $Z(Z_{W(D_{pq})}([a_1,\ldots,a_{pq}; \sigma_{p.q}])) = \langle [\bf{1}_{pq}; \sigma_{p.q}], [\bf{-1}_{pq}; (1)] \rangle$.

When $p \geq 3$ is odd $[a_1,\ldots,a_{pq}; \sigma_{p.q}] = [\bmf{1}_{pq}; \sigma_{p.q}]$. Then $$Z_{W(D_{pq})}([\bmf{1}_{pq}; \sigma_{p.q}]) =\left\{ [\bmf{h_1}_p,\bmf{h_2}_p,\ldots, \bmf{h_q}_p;\tau]\mid h_i = \pm1, \tau\in Z_{S_{pq}}(\sigma_{p.q}), h_1 h_2\cdots h_q = 1\right\}. $$

Suppose we have $\bmf{c} = [-\bmf{1}_p, -\bmf{1}_p, \bmf{1}^{q-2}_p; (\sigma_{p.q})^j] \notin \langle [\bmf{1}_{pq}; \sigma_{p.q}], [-\bmf{1}_{pq}; (1)]\rangle$. Take the element $\bmf{d} = [-\bmf{1}_p, \bmf{1}_p, -\bmf{1}_p, \bmf{1}^{q-3}_p; \tau] \in Z_{W(D_{pq})}([\bmf{1}_{pq}; \sigma_{p.q}])$, such that $\tau (2p +j) = j'$, for $1\leq j ,j'\leq p$. Then \begin{eqnarray*}\bmf{dcd}^{-1} &=& [-\bmf{1}_p, \bmf{1}_p, -\bmf{1}_p, \bmf{1}^{q-3}_p; \tau][-\bmf{1}_p, -\bmf{1}_p, \bmf{1}^{q-2}_p; (\sigma_{p.q})^j][-\bmf{1}_p, \bmf{1}_p, -\bmf{1}_p, \bmf{1}^{q-3}_p; \tau]^{-1}\\ &=& [\bmf{1_p},b_{p+1},\cdots, b_{pq} ;(\sigma_{p.q})^j]\\
&\neq& \bmf{c}.
\end{eqnarray*}
Thus $Z(Z_{W(D_{pq})}([\bmf{1}_{p.q}; \sigma_{p.q}])) = Z(Z_{W(C_n)}([\bmf{1}_{p.q}; \sigma_{p.q}]))$. The same holds for the class $\overline{p}^q$.

For $\overline{p}^q$, where $p$ and $q$ are both even, we use similar arguments and show $Z(Z_{W(D_{pq})}([\overline{1}_{p},\ldots,\overline{1}_p; \sigma_{p.q}])) = Z(Z_{W(C_n)}([\overline{1}_{p},\ldots,\overline{1}_p; \sigma_{p.q}]))$
\end{proof}

\begin{lemma}\label{non-split-in-D_n}
Let $C_1$ and $C_2$ be two non-split conjugacy classes in $D_n.$ Let ${\bf x}\in C_1$ and ${\bf y}\in C_2, $ if ${\bf x}$ and ${\bf y}$ are $z$-conjugate in $D_n,$ then they are also $z$-conjugate in $C_2\wr S_n.$ 
\end{lemma}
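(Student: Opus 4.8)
The plan is to deduce the claim from Lemma~\ref{Lemma2_index2_converse} whenever its hypothesis on centers of centralizers is available, and to settle the remaining case by a direct analysis. Recall from Lemma~\ref{Center_of_centralizer_in_WDn} that $Z(Z_{C_2\wr S_n}({\bf x}))\cap D_n=Z(Z_{D_n}({\bf x}))$ holds unless the conjugacy class of ${\bf x}$ in $C_2\wr S_n$ has one of the two \emph{exceptional} signed-partition types $1^2m_1^{x_1}\cdots m_v^{x_v}$ or $\overline{1}^2m_1^{x_1}\cdots m_v^{x_v}$, with all $m_i$ even and positive.

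First suppose that at least one of $C_1,C_2$, say $C_1$, is not of exceptional type. Pick ${\bf x}\in C_1$; then $Z(Z_{C_2\wr S_n}({\bf x}))\cap D_n=Z(Z_{D_n}({\bf x}))$ by Lemma~\ref{Center_of_centralizer_in_WDn}. Since $C_1$ and $C_2$ are non-split and, by hypothesis, ${\bf x}$ is $z$-conjugate to ${\bf y}$ in $D_n$, Lemma~\ref{Lemma2_index2_converse}, applied with $G=C_2\wr S_n$, $H=D_n$ and ${\bf x}$ in the role of $x$, gives that ${\bf x}$ and ${\bf y}$ are $z$-conjugate in $C_2\wr S_n$. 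If instead it is $C_2$ that is not of exceptional type, we exchange the roles of ${\bf x}$ and ${\bf y}$ and argue the same way, using that $z$-conjugacy is symmetric. So we may assume $C_1$ and $C_2$ are both of exceptional type, with underlying partitions $1^2\mu$ and $1^2\mu'$ where $\mu,\mu'$ are partitions into even parts.

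Fix ${\bf x}\in C_1$. Since every part $m$ occurring in $\mu$ is even, each element of $Z_{C_2\wr S_{mx}}(m^x)$ has sign vector constant on each $m$-cycle (Proposition~\ref{prop_kl}), hence all its cycle products equal $1$ and $Z_{C_2\wr S_{mx}}(m^x)\subseteq D_{mx}$; together with Theorem~\ref{ThOddEven} this yields $Z_{D_n}({\bf x})=D_2\times\prod_jZ_{D_{m_jx_j}}(m_j^{x_j})$, where the factor $D_2\cong C_2\times C_2$ comes from the block $1^2$ (or $\overline{1}^2$). As $D_2$ is abelian, $Z(Z_{D_n}({\bf x}))=D_2\times\prod_jZ(Z_{D_{m_jx_j}}(m_j^{x_j}))$; and exactly as in the proof of Lemma~\ref{S_n-parts}, the $S_n$-parts of its elements form a subgroup of $S_n$, namely $\langle(12)\rangle\times\prod_j\langle\sigma_{m_j.x_j}\rangle$, whose orbits on $\{1,\dots,n\}$ have sizes forming the multiset $\{2\}\cup\mu$. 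If ${\bf x}$ and ${\bf y}$ are $z$-conjugate in $D_n$, these subgroups of $S_n$ are conjugate, so $\{2\}\cup\mu=\{2\}\cup\mu'$ and hence $\mu=\mu'$. Consequently $C_1$ and $C_2$ both have signed partition $1^2\mu$ or $\overline{1}^2\mu$, and by Theorem~\ref{ZThOddEven} (odd part $k_1=1$ of multiplicity $l_1=2$, so that $\{c_1,d_1\}=\{2,0\}$) these two signed partitions lie in the same $z$-class of $C_2\wr S_n$; hence ${\bf x}$ and ${\bf y}$ are $z$-conjugate in $C_2\wr S_n$, which completes the proof.

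I expect the main obstacle to be the exceptional case: one must establish the direct-product shape of $Z_{D_n}({\bf x})$ and then extract from $Z(Z_{D_n}({\bf x}))$ an invariant fine enough to recover $\mu$. The obvious numerical invariants are too coarse — neither $|Z_{D_n}({\bf x})|$ nor $|Z(Z_{D_n}({\bf x}))|$ distinguishes, say, $1^22^34^1$ from $1^22^14^2$ in $D_{12}$ — so one is led to the orbit structure on $\{1,\dots,n\}$ of the $S_n$-parts of $Z(Z_{D_n}({\bf x}))$, which does.
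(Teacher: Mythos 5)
Your proof is correct and takes essentially the same route as the paper's: the non-exceptional case is dispatched by combining Lemma~\ref{Center_of_centralizer_in_WDn} with Lemma~\ref{Lemma2_index2_converse}, and the case where both classes have signed type $1^2\mu$ or $\overline{1}^2\mu$ is treated separately. The only difference is that where the paper asserts the remaining case ``is easy to see,'' you actually justify the implicit claim that the two classes must share the same even-parts portion $\mu$ --- via the orbit structure of the $S_n$-parts of $Z(Z_{D_n}({\bf x}))$, which is the same invariant the paper deploys later in Lemma~\ref{S_n_parts_D_n} --- so your write-up is a more complete version of the intended argument.
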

\begin{proof}
In the case when the signed  partition corresponding to conjugacy class of ${\bf x}$ in $C_2 \wr S_n$ is not of type $1^2m_1^{x_1}m_2^{x_2}\dots m_v^{x_v}$ or $\bar{1}^2m_1^{x_1}m_2^{x_2}\dots m_v^{x_v},$ where $m_i$ are even parts with positive sign for all $1\leq i\leq v,$ the result follows from Lemma \ref{Center_of_centralizer_in_WDn} and  Lemma \ref{Lemma2_index2_converse}.

Now we are only left with the case when the signed  partition corresponding to conjugacy class of ${\bf x}$ in $C_2 \wr S_n$ is of type $1^2m_1^{x_1}m_2^{x_2}\dots m_v^{x_v}$ and that corresponding to conjugacy class of ${\bf y}$ in $C_2 \wr S_n$ is of type $\bar{1}^2m_1^{x_1}m_2^{x_2}\dots m_v^{x_v},$ where $m_i$ are even parts with positive sign for all $1\leq i\leq v.$ It is easy to see in this case, the elements ${\bf x}$ and ${\bf y}$ are $z$-conjugate in both $C_2 \wr S_n$  and  $D_n.$ 
\end{proof}

The above lemma along with Lemma \ref{Lemma1_index2}, gives us that two elements belonging to two distinct non-split conjugacy classes are $z$-conjugate in $D_n$ if and only if they are $z$-conjugate in $C_2 \wr S_n.$ Now we are only left with the case when two elements are $z$-conjugate in $D_n,$ where one belongs to split conjugacy class and other belongs to non-split conjugacy classes. The following lemma helps in handling such a situation.

\begin{lemma}\label{S_n_parts_D_n}
If two elements of $D_n$ are $z$-conjugate, then either their $S_n$-parts are conjugate in $S_n,$ or the partition type of their $S_n$-parts is $1^2m_1^{x_1}m_2^{x_2}\dots m_v^{x_v}$ or $2^1m_1^{x_1}m_2^{x_2}\dots m_v^{x_v},$ where $m_i$ are even parts of positive type, also $m_i\geq 4$ for all $1\leq i\leq v.$ 
\end{lemma}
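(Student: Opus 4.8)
The plan is to transport the ``orbits of the $S_n$-parts of the center of the centralizer'' argument of Lemma~\ref{S_n-parts} into $D_n$, keeping track of the one block type on which the center of the centralizer taken in $D_n$ is strictly larger than the one taken in $C_2\wr S_n$. Let $\bmf{a}_\sigma,\bmf{b}_\pi\in D_n$ be $z$-conjugate in $D_n$. Then $Z(Z_{D_n}(\bmf{a}_\sigma))$ and $Z(Z_{D_n}(\bmf{b}_\pi))$ are conjugate in $D_n$, and since conjugation in $D_n\subseteq C_2\wr S_n$ sends the $S_n$-part of an element to the $S_n$-conjugate of its $S_n$-part, the subgroup $B_{\bmf{a}_\sigma}\leq S_n$ generated by the $S_n$-parts of the elements of $Z(Z_{D_n}(\bmf{a}_\sigma))$ is $S_n$-conjugate to $B_{\bmf{b}_\pi}$. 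In particular $B_{\bmf{a}_\sigma}$ and $B_{\bmf{b}_\pi}$ induce the same partition of $\{1,\dots,n\}$ into orbits. Everything reduces to computing this orbit partition.

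Suppose first that the signed partition of the $C_2\wr S_n$-class of $\bmf{a}_\sigma$ is not of the exceptional shape $1^2m_1^{x_1}\cdots m_v^{x_v}$ or $\overline{1}^2m_1^{x_1}\cdots m_v^{x_v}$ with all $m_i$ even and positive. By Lemma~\ref{Center_of_centralizer_in_WDn}, $Z(Z_{D_n}(\bmf{a}_\sigma))=Z(Z_{C_2\wr S_n}(\bmf{a}_\sigma))\cap D_n$, so $B_{\bmf{a}_\sigma}$ sits inside the group $A_{\bmf{a}_\sigma}$ of Lemma~\ref{S_n-parts}, whose orbits are exactly the cycles of $\sigma$; I would then show $B_{\bmf{a}_\sigma}$ already has the cycles of $\sigma$ as its orbits. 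Indeed, in the explicit description of $Z(Z_{C_2\wr S_n}(\bmf{a}_\sigma))$ from Lemma~\ref{S_n-parts}, every block except a negative even block $\overline{m_i}^{y_i}$ has a generator all of whose $C_2$-coordinates are $+1$, hence lying in $D_n$, and whose $S_n$-part acts transitively on each of the cycles of that block; for a negative even block $\overline{m_i}^{y_i}$ the natural generator has exactly $y_i$ coordinates equal to $-1$, so it lies in $D_n$ when $y_i$ is even, and when $y_i$ is odd one multiplies it by another generator having an odd number of $-1$ coordinates --- such a generator exists because $\bmf{a}_\sigma\in D_n$ forces the total number of negative parts of the signed partition to be even --- to obtain an element of $D_n$ whose $S_n$-part still acts transitively on each cycle of that block. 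Since combining generators from distinct blocks never fuses two cycles of $\sigma$, each cycle of $\sigma$ is a single $B_{\bmf{a}_\sigma}$-orbit, and as $B_{\bmf{a}_\sigma}\leq A_{\bmf{a}_\sigma}$ these are exactly its orbits; so the orbit partition of $B_{\bmf{a}_\sigma}$ is the cycle type of $\sigma$.

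Now suppose the signed partition of $\bmf{a}_\sigma$ is of the exceptional shape. If some $m_j=2$, I would argue directly that $\bmf{a}_\sigma$ cannot be $z$-conjugate to any element whose $S_n$-part is not conjugate to $\sigma$: comparing the orders of $Z_{D_n}(\bmf{a}_\sigma)$ and of $Z_{D_n}$ of the element obtained by ``merging'' the two fixed points into an extra $2$-cycle, the factorials coming from the symmetric groups acting on the length-$2$ blocks no longer match, so these centralizers have different orders, and we fall back on the first alternative. If instead all $m_i\geq 4$, then by Lemma~\ref{Center_of_centralizer_in_WDn} the $1^2$ (resp.\ $\overline{1}^2$) block contributes to $Z(Z_{D_n}(\bmf{a}_\sigma))$ an element whose $S_n$-part is the transposition of the two fixed points of $\sigma$ (because $[1,1;(12)]\in Z(Z_{D_2})$), while every other block behaves as in the generic case. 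Hence $B_{\bmf{a}_\sigma}$ has one orbit of size $2$ together with $x_i$ orbits of size $m_i$ for each $i$, i.e.\ its orbit partition is $2^1m_1^{x_1}\cdots m_v^{x_v}$.

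Combining: if $\bmf{a}_\sigma$ and $\bmf{b}_\pi$ are both generic the common orbit partition equals the cycle types of both $\sigma$ and $\pi$, so $\sigma$ and $\pi$ are conjugate in $S_n$; if, say, $\bmf{a}_\sigma$ is exceptional with all $m_i\geq 4$, then $\sigma$ has cycle type $1^2m_1^{x_1}\cdots m_v^{x_v}$, the common orbit partition is $2^1m_1^{x_1}\cdots m_v^{x_v}$, and $\pi$ has cycle type either $1^2m_1^{x_1}\cdots m_v^{x_v}$ (if $\bmf{b}_\pi$ is exceptional too) or $2^1m_1^{x_1}\cdots m_v^{x_v}$ (if $\bmf{b}_\pi$ is generic), which is the claimed alternative. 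The step I expect to be the main obstacle is the generic case: one must verify carefully that passing from $Z(Z_{C_2\wr S_n}(\bmf{a}_\sigma))$ to its intersection with $D_n$ does not refine the orbit partition, i.e.\ that the pairing of generators with an odd number of $-1$ coordinates is always possible and never accidentally merges two distinct cycles of $\sigma$. The order comparison needed to dispose of the $m_j=2$ sub-case is a secondary, more routine point.
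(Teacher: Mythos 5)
Your proof follows essentially the same route as the paper's: both reduce the statement to comparing the orbit partitions of the subgroups of $S_n$ generated by the $S_n$-parts of $Z(Z_{D_n}(\cdot))$, using Lemma \ref{Center_of_centralizer_in_WDn} together with the computation in Lemma \ref{S_n-parts}, both treat the exceptional signed types $1^2m_1^{x_1}\cdots m_v^{x_v}$ and $\overline{1}^2m_1^{x_1}\cdots m_v^{x_v}$ separately, and both dispose of the $m_j=2$ possibility by comparing centralizer orders. If anything, your check that intersecting $Z(Z_{C_2\wr S_n}(\bmf{a}_\sigma))$ with $D_n$ does not refine the orbit partition (via the parity pairing of generators with an odd number of $-1$ coordinates) is more careful than the paper's, which asserts the slightly too strong statement that the full set of $S_n$-parts is unchanged; the orbit conclusion is what is actually needed, and both arguments reach it.
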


\begin{proof} Using the same notations as in Lemma \ref{S_n-parts}, let ${\bf a}_\sigma \in D_n$ and the partition type of $\sigma$ is not $1^2m_1^{x_1}m_2^{x_2}\dots m_v^{x_v},$  where $m_i$ are even parts of positive type. Using Lemma \ref{Center_of_centralizer_in_WDn}, we get $Z(Z_{(D_n}({\bf a}_\sigma))=Z(Z_{(C_2 \wr S_n}({\bf a}_\sigma)) \cap D_n.$ This in turn implies that the set $A_{{\bf a}_\sigma}$ defined in proof of lemma \ref{S_n-parts}, is same as the set of $S_n$-parts of all elements of $Z(Z_{D_n}({\bf a}_\sigma)).$ Hence its natural action on the  set $[1,2, \dots, n]$ determines the cycle type of $\sigma.$ 

If ${\bf b}_\pi$ is $z$-conjugate to ${\bf a}_\sigma$ in $D_n,$ and the partition type $\pi$ is not $1^2m_1^{x_1}m_2^{x_2}\dots m_v^{x_v},$ where $m_i's$  are even parts of positive type, then the sets  $A_{{\bf b}_\pi}$ and $A_{{\bf a}_\sigma}$ defined in proof of lemma \ref{S_n-parts} are conjugate subgroups of $S_n.$ Therefore, orbits of  their actions on the  set $[1,2, \dots, n]$ determine the same partition of $n.$ Hence $\sigma$ and $\pi$ are conjugate in $S_n$ in this case.

Now we consider the case, when the signed partition corresponding to conjugacy class of  ${\bf a}_\sigma$ is either $1^2m_1^{u_1}m_2^{u_2}\dots m_r^{u_r}$ or $\bar{1}^2m_1^{u_1}m_2^{u_2}\dots m_r^{u_r},$ where $m_i$ are even positive parts. In this case
 $$ Z(Z_{D_n}({\bf a}_\sigma)) =\{[h,h, (1,2)^i]~|~h=\pm1, i=0,1\} \times \Pi_{i=1}^v Z(Z_{C_2 \wr S_{m_ix_i}} ([{\bf 1}_{m_ix_i}, \sigma_{m_ix_i}])).$$
 We compute $A_{\bf{a}_\sigma} = \langle (1,2), \sigma_{m_ix_i} \rangle. $ Now the natural action of $A_{\bf{a}_\sigma}$ on the set $[1,2,\dots, n]$ determines the partition $2^1m_1^{x_1}m_2^{x_2}\dots m_v^{x_v}$ of $n.$ 
 
 This implies if ${\bf b}_\pi$ is $z$-conjugate to ${\bf a}_\sigma$ in $D_n,$ then partition corresponding to $\pi$ is either $1^2m_1^{x_1}m_2^{x_2}\dots m_v^{x_v}$ or $2^1m_1^{x_1}m_2^{x_2}\dots m_v^{x_v},$ where $m_i$ are even parts. If the partition type of $\pi$ is $2^1m_1^{x_1}m_2^{x_2}\dots m_v^{x_v},$ where $m_i$ are even parts, then using Lemma \ref{non-split-in-D_n} and Lemma \ref{Center_of_centralizer_in_WDn}, we get the signed partition corresponding to conjugacy class of ${\bf b}_\pi$ is $2^1m_1^{x_1}m_2^{x_2}\dots m_v^{x_v},$ where $m_i$ are even parts with positive type.  Moreover, we claim that partition corresponding to $\pi$ is $2^1m_1^{x_1}m_2^{x_2}\dots m_v^{x_v},$ then $m_i\geq 4$ for all $1\leq i\leq v.$ To establish this claim, let the partition corresponding to $\pi$ be $2^{x_1+1}m_2^{x_2}\dots m_v^{x_v},$ then size of $Z_{D_n}({\bf b}_\pi)$ is $4^{x_1+1}(x_1+1)! (2m_2)^{x_2} x_2! \dots (2m_v)^{x_v} x_v!.$ Since the signed partition corresponding to ${\bf a}_\sigma$ is either $1^22^{x_1}m_2^{x_2}\dots m_v^{x_v}$ or~~ $\bar{1}^22^{x_1}m_2^{x_2}\dots m_v^{x_v},$ then 
 the size of $Z_{D_n}({\bf a}_\sigma)$ is $4^{x_1+1} x_1!(2m_2)^{x_2} x_2! \dots (2m_v)^{x_v} x_v!.$ As ${\bf b}_\pi$ is $z$-conjugate to ${\bf a}_\sigma$ in $D_n,$ size of $Z_{D_n}({\bf b}_\pi)$ and $Z_{D_n}({\bf a}_\sigma)$ should be same. This will happen only if $x_1=0.$ 
 \end{proof}
 \begin{remark}
 Using the above lemma and \cite[Th. 1.1]{BKS:2019}, we get that if two elements are $z$-conjugate in $D_n,$ then their $S_n$ parts are $z$-conjugate in $S_n.$
 \end{remark}

 \begin{proposition}\label{Split_and_non-split_in_D_n}
Let ${\bf a}, {\bf b} \in D_n.$ Let ${\bf a}$ lie in split conjugacy class and ${\bf b}$ lie in non-split conjugacy class in $D_n.$ The elements ${\bf a}$ and ${\bf b}$ are $z$-conjugate in $D_n$ if and only if the signed partition corresponding to conjugacy class of ${\bf a}$ in $C_2 \wr S_n$ is $2^1m_2^{x_2}\cdots m_v^{x_v},$ and the signed partition corresponding to conjugacy class of ${\bf b}$ in $C_2 \wr S_n$ is $\varepsilon m_2^{x_2}\cdots m_v^{x_v},$ where $\varepsilon = { 1}^2$, or ${ -1}^2$ and $m_i$ is even, $m_i\geq 4$ for all $2\leq i \leq v.$ 
 \end{proposition}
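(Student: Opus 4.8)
The plan is to establish both implications by reducing, in each case, to an explicit description of $Z_{D_n}$ of carefully chosen representatives, exploiting the block decomposition of centralizers from Theorem~\ref{ThOddEven} together with the structure of the blocks in Propositions~\ref{prop_kl} and \ref{Prop_overline_kl}. The organizing principle is that for an even part the ``positive block'' $Z_{C_2\wr S_{x_jm_j}}([\bmf{1}_{x_jm_j};\sigma_{m_j.x_j}])$ consists only of elements with an even number of entries equal to $-1$ (each index with sign $-1$ contributes the even number $m_j$), so all such blocks lie entirely inside $D_n$; passing from $C_2\wr S_n$ to $D_n$ therefore only affects the piece of the centralizer supported on the small parts $1^2$, $\overline{1}^2$, or $2^1$.

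For the ``if'' direction, fix $\sigma\in S_n$ that is the product of the transposition $(1,2)$ and, for $2\le j\le v$, of $x_j$ cycles of length $m_j$, and set $\bmf{a}=[\bmf{1}_n;\sigma]$, of signed cycle type $2^1m_2^{x_2}\cdots m_v^{x_v}$, all parts even and positive; hence $\bmf{a}$ lies in a split class and $Z_{D_n}(\bmf{a})=Z_{C_2\wr S_n}(\bmf{a})$. By Theorem~\ref{ThOddEven} and Proposition~\ref{prop_kl} this equals $\{[h,h;\tau]:h=\pm1,\ \tau\in\{e,(1,2)\}\}$ (on $\{1,2\}$) times $\prod_{j=2}^v Z_{C_2\wr S_{x_jm_j}}([\bmf{1}_{x_jm_j};\sigma_{m_j.x_j}])$. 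For $\bmf{b}$ take $[\varepsilon,\bmf{1}_{n-2};\pi]$, where $\pi$ fixes $1$ and $2$ and has the same $m_j$-cycles and $\varepsilon\in\{(1,1),(-1,-1)\}$; then $\bmf{b}$ has signed cycle type $1^2m_2^{x_2}\cdots$ or $\overline{1}^2m_2^{x_2}\cdots$, lies in a non-split class, and $Z_{C_2\wr S_n}(\bmf{b})=Z_{C_2\wr S_2}([\varepsilon;e])\times\prod_{j=2}^v Z_{C_2\wr S_{x_jm_j}}([\bmf{1}_{x_jm_j};\sigma_{m_j.x_j}])$ with $Z_{C_2\wr S_2}([\varepsilon;e])=C_2\wr S_2$ in both cases. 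Intersecting with $D_n$ and using that every $j\ge 2$ factor already lies in $D_n$, the $C_2\wr S_2$ factor is cut down precisely to $\{[h,h;\tau]:h=\pm1,\ \tau\in\{e,(1,2)\}\}$, so $Z_{D_n}(\bmf{b})=Z_{D_n}(\bmf{a})$; in particular $\bmf{a}$ and $\bmf{b}$ are $z$-conjugate in $D_n$.

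For the ``only if'' direction, assume $\bmf{a}$ (split) and $\bmf{b}$ (non-split) are $z$-conjugate in $D_n$. Since $\bmf{a}$ lies in a split class its underlying partition $\Lbd_{\bmf{a}}$ has only even parts, so it is not of type $1^2m_1^{x_1}\cdots$; hence by Lemma~\ref{S_n_parts_D_n} and the analysis of $A_{\bmf{a}}$, $A_{\bmf{b}}$ in its proof, either the $S_n$-parts of $\bmf{a}$, $\bmf{b}$ are conjugate, or $\Lbd_{\bmf{a}}=2^1m_2^{x_2}\cdots m_v^{x_v}$ with all $m_j\ge 4$ even. In the first case $\Lbd_{\bmf{b}}=\Lbd_{\bmf{a}}$ consists of even parts and $\bmf{b}$, being non-split, must carry a negative even part; comparing orders via Propositions~\ref{prop_kl} and \ref{Prop_overline_kl} gives, after cancelling the common factors $(2m_j)^{x_j}$, an identity of the shape $2\prod_j x_j!=\prod_j(x_j-y_j)!\,y_j!$ with some $y_j\ge 1$, impossible since $(x_j-y_j)!\,y_j!\le x_j!$ for all $j$. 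Thus $\Lbd_{\bmf{a}}=2^1m_2^{x_2}\cdots m_v^{x_v}$, its signed type being exactly this (all positive), and $\Lbd_{\bmf{b}}$ is $2^1m_2^{x_2}\cdots$ or $1^2m_2^{x_2}\cdots$ with the same $m_j$, $x_j$. The alternative $\Lbd_{\bmf{b}}=2^1m_2^{x_2}\cdots$ is excluded by the same order inequality ($\bmf{b}$ non-split again forces a negative even part), so $\Lbd_{\bmf{b}}=1^2m_2^{x_2}\cdots$; and among the signed types over $1^2m_2^{x_2}\cdots$, only $1^2m_2^{x_2}\cdots$ and $\overline{1}^2m_2^{x_2}\cdots$ (both with all $m_j$ positive) have centralizer of the same order as $Z_{D_n}(\bmf{a})$, pinning the signed type of $\bmf{b}$ down to $\varepsilon m_2^{x_2}\cdots m_v^{x_v}$ with $\varepsilon\in\{1^2,\overline{1}^2\}$, as claimed.

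I expect the main obstacle to be the bookkeeping in the ``only if'' direction: one must invoke Lemma~\ref{S_n_parts_D_n} together with the $A_{\bmf{a}}$, $A_{\bmf{b}}$ argument from its proof to locate the correct shapes of $\Lbd_{\bmf{a}}$ and $\Lbd_{\bmf{b}}$ (being careful that these two partitions need not coincide and that both may be exceptional), and then apply the order inequality $\prod_j(x_j-y_j)!\,y_j!\le\prod_j x_j!$ repeatedly to rule out every signed type not on the list. The ``if'' direction is a direct centralizer computation whose only subtlety is the parity remark about the even-part blocks.
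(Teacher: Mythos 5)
Your ``if'' direction is essentially sound: the explicit representatives, the observation that every positive even block $Z_{C_2\wr S_{x_jm_j}}([\bmf{1}_{x_jm_j};\sigma_{m_j.x_j}])$ lies entirely inside $D_n$, and the resulting identity $Z_{D_n}(\bmf{a})=Z_{D_n}(\bmf{b})$ all check out (you should add a line explaining why the \emph{second} split class over $2^1m_2^{x_2}\cdots m_v^{x_v}$ is also covered, e.g.\ via Theorem \ref{Th_split_CC_in_D_n}, but that is minor). The problem is in the ``only if'' direction, at the very last step.

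Your final claim --- that among the signed types with underlying partition $1^2m_2^{x_2}\cdots m_v^{x_v}$, only $1^2m_2^{x_2}\cdots$ and $\overline{1}^2m_2^{x_2}\cdots$ with all $m_j$ positive have centralizer of the same order as $Z_{D_n}(\bmf{a})$ --- is false, so the order comparison cannot finish the argument. By Propositions \ref{prop_kl} and \ref{Prop_overline_kl}, the blocks $Z_{C_2\wr S_{x_jm_j}}(m_j^{x_j})\cong(C_2\times C_{m_j})\wr S_{x_j}$ and $Z_{C_2\wr S_{x_jm_j}}(\overline{m_j}^{x_j})\cong C_{2m_j}\wr S_{x_j}$ have the \emph{same order} $(2m_j)^{x_j}x_j!$; your inequality $\prod_j(x_j-y_j)!\,y_j!\le\prod_j x_j!$ is an equality exactly when each $y_j\in\{0,x_j\}$. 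Concretely, for $n=10$ the non-split class $1^2\overline{4}^2$ lies in $D_{10}$ and satisfies $|Z_{D_{10}}(1^2\overline{4}^2)|=\tfrac12\cdot 8\cdot|C_8\wr S_2|=512=|Z_{D_{10}}(2^14^2)|$, so order alone does not exclude it, yet the proposition asserts it is not $z$-conjugate to the split classes of $2^14^2$. To rule out such types you need a finer invariant. The paper's route is different here: from $\bmf{a}$ split and $\bmf{a},\bmf{b}$ $z$-conjugate it deduces that $\bmf{gag}^{-1}$ lies in $Z(Z_{D_n}(\bmf{b}))$ but not in $Z(Z_{C_2\wr S_n}(\bmf{b}))$ (since $Z_{C_2\wr S_n}(\bmf{gag}^{-1})\subseteq D_n$ while $Z_{C_2\wr S_n}(\bmf{b})\not\subseteq D_n$), hence $Z(Z_{C_2\wr S_n}(\bmf{b}))\cap D_n\subsetneq Z(Z_{D_n}(\bmf{b}))$, and then Lemma \ref{Center_of_centralizer_in_WDn} says this strict inclusion forces the signed type of $\bmf{b}$ to be exactly $1^2m_2^{x_2}\cdots$ or $\overline{1}^2m_2^{x_2}\cdots$ with positive even parts. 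You need to invoke that lemma (or compare isomorphism types of the centralizers rather than their orders) to close the gap; the rest of your skeleton --- Lemma \ref{S_n_parts_D_n} to fix the underlying partitions and the factorial inequality to kill the cases with $1\le y_j\le x_j-1$ or with $\Lbd_{\bmf{b}}$ all even --- is fine.
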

\begin{proof}
    Using Theorem \ref{ZThOddEven}, we know that the conjugacy classes corresponding to signed partitions ${ 1}^2m_2^{x_2}\cdots m_v^{x_v}$ and $\bar{1}^2m_2^{x_2}\cdots m_v^{x_v}$ are $z$-conjugate in $C_2 \wr S_n.$ Now using Lemma \ref{Lemma1_index2}, we get these conjugacy classes are also $z$-conjugate in $D_n.$ Also using Theorem \ref{Th_split_CC_in_D_n}, we get that the two split classes of $D_n$ corresponding to signed partition $2^1m_2^{x_2}\cdots m_v^{x_v},$ where $m_i$ is even, $m_i\geq 4$ for all $2\leq i \leq v,$ are $z$-conjugate in $D_n.$ Now let ${\bf a}=[{\bf 1}_n ; \sigma]$ and ${\bf b}=[{\bf 1}_n,\tau],$ where the partition type of $\sigma$ is  ${1}^2m_2^{x_2}\cdots m_v^{x_v}$ and that of $\tau$ is 
    $2^1m_2^{x_2}\cdots m_v^{x_v},$ where  $m_i$ is even, $m_i\geq 4$ for all $2\leq i \leq v.$ It is easy to check that $Z_{D_n}({\bf a})=Z_{D_n}({\bf b}).$ Hence ${\bf a}$ and ${\bf b}$ are $z$-conjugate in $D_n.$

    For the converse, let ${\bf a}$ be in a split conjugacy class and ${\bf b}$, in a non-split conjugacy class of $D_n,$ and ${\bf a} $ is $z$-conjugate to ${\bf b}.$ Therefore, we get ${\bf g}\in D_n$ such that ${\bf g}Z_{D_n}({\bf a}){\bf g}^{-1}=Z_{D_n}({\bf gag}^{-1})=Z_{D_n}({\bf b}).$ We also have $Z_{D_n}({\bf gag}^{-1}) = Z_{C_2 \wr S_n}({\bf gag}^{-1}).$ Now ${\bf gag}^{-1} \in Z(Z_{D_n}({\bf b}))$ but does not commute with any element outside $D_n$, we see that ${\bf gag^{-1}} \notin Z(Z_{C_2 \wr S_n}({\bf b})).$ Thus $Z(Z_{C_2 \wr S_n}({\bf b})) \cap D_n \subsetneq Z(Z_{D_n}({\bf b}))$. Thus using lemma \ref{Center_of_centralizer_in_WDn}, we get the signed partition corresponding to the conjugacy class of ${\bf b}$ to be  $\varepsilon m_2^{x_2}\cdots m_v^{x_v}$, where $\varepsilon = {1}^2$, or $\bar{1}^2$ and $m_i$ is even, for all $2\leq i \leq v.$ Now using lemma \ref{S_n_parts_D_n} and the fact that ${\bf a}$ is in a split conjugacy class, we get $m_i\geq 4$ for all $2\leq i \leq v,$ and the signed partition corresponding to the conjugacy class of ${\bf a}$ is $2^1m_2^{x_2}\cdots m_v^{x_v}$ in $C_2 \wr S_n.$ This completes the proof. 
\end{proof}

\subsection{Proof of Theorem~\ref{DMain2} :}
\begin{proof}
    For a partition ${\bf \lambda} = k_1^{l_1}\cdots k_u^{l_u}m_1^{w_1}\cdots m_v^{w_v} \in \delta(n)$. Now, consider any consider any signed cycle type with ${\bf \lbd}$ as the underlying cycle type:
    $$\tilde{\lbd} = k_1^{s_1}\overline{k_1}^{t_1}\cdots k_u^{s_u}\overline{k_u}^{s_u}m_1^{x_1}\overline{m_1}^{y_1}\cdots m_v^{x_v}\overline{m_v}^{y_v}$$
    Now $s_i + t_i = l_i$ for $1 \leq i \leq u$ and $x_j + y_j = w_j$ for $1 \leq j \leq v$. In particular $s_{i_0} + t_{i_0} = l_{i_0}$. We know that $l_{i_0}$ is odd. So, $s_{i_0} - t_{i_0}$ is even. Suppose $\sum_{i=1}^u t_i + \sum_{j=1}^vy_j$ is even, then this conjugacy class is already in $D_n$. But, if  $\sum_{i=1}^u t_i + \sum_{j=1}^vy_j$ is odd, consider the conjugacy class: $$k_1^{s_1}\overline{k_1}^{t_1}\cdots k_{i_0}^{t_0}\overline{k_{i_0}}^{s_0}\cdots k_u^{s_u}\overline{k_u}^{s_u}m_1^{x_1}\overline{m_1}^{y_1}\cdots m_v^{x_v}\overline{m_v}^{y_v},$$ which is $z$-conjugate to $\tilde{\lbd}$. In this, the negative parts add up to $\displaystyle s_{i_0} + \sum_{i=1, i\neq i_0}^u t_i + \sum_{j=1}^v y_j =(s_{i_0} - t_{i_0})  + \sum_{i=1}^u t_i + \sum_{j=1}^vy_j $, which is the sum of two odd numbers, hence it is even. So, This $z$-class has conjugacy classes in $D_n$. Using Remark ~\ref{Remark_odd_D_n}, the number of such type of $z$-classes of $D_n$ is \begin{equation}\label{DnZclPt1}\sum_{{\bf \lambda} \in \delta(n) }\left( \left(\prod_{i=1}^u \left(\left\lfloor \frac{l_i}{2}\right\rfloor + 1\right)\right)\left(\prod_{j=1}^v (w_j+1)\right)\right) \end{equation}

    Suppose, ${\bf \lbd} =k_1^{l_1}\cdots k_u^{l_u}m_1^{w_1}\cdots m_v^{w_v} $ is a partition in $\delta'(n)$, where no odd part has odd multiplicity, i.e., $l_i$ is even for all $i$. Now, consider a $z$-class of $C_2 \wr S_n$, represented by the conjugacy class $\tilde{\lbd}$ given by $$k_1^{s_1}\overline{k_1}^{t_1}\cdots k_u^{s_u}\overline{k_u}^{t_u}m_1^{x_1}\overline{m_1}^{y_1}\cdots m_v^{x_v}\overline{m_v}^{y_v}, $$
 where $s_i + t_i =  l_i$ and $s_i \geq t_i$; and $x_j + y_j = w_j$. For $\tilde{\lbd}$ to be in $D_n$, we need $\sum_{i=1}^ut_i + \sum_{j=1}^v y_j $  should be an even number. As $s_i \geq t_i$, we know that $0 \leq t_i \leq \frac{l_i}{2} =\left\lfloor \frac{l_i}{2}\right\rfloor$ for $1 \leq i \leq u$ and $0 \leq y_j \leq w_j$  for $1\leq j \leq v$. 
 By Lemma~\ref{LemmaCount}, the number of such $z$-classes is 
 \begin{equation}\label{DnZclPt2} \sum_{{\bf \lambda} \in \delta^\prime(n) }\left\lceil \frac{\left(\prod_{i=1}^u \left(\left\lfloor \frac{l_i}{2}\right\rfloor + 1\right)\right)\left(\prod_{j=1}^v (w_j+1)\right)}{2}\right\rceil
 \end{equation}

 Now, by Proposition~\ref{Split_and_non-split_in_D_n}, the classes $\varepsilon4^{m_2}\cdots n^{m_n}$($\varepsilon = 1^2 ~or~\overline{1}^2$) and both the split classes with signed cycle type $2^14^{m_2}\cdots n^{m_n}$ are $z$ conjugate. Hence $\zeta(n-2)$ such $z$-classes are counted twice, so we subtract $\zeta(n-2)$ from Equation~\ref{DnZclPt2}. 
 
 But, by  Theorem~\ref{Th_split_CC_in_D_n}, we know that the split classes with signed cycle type $2^{m_2}4^{m_4}\cdots n^{m_n}$, where all $4k + 2$ parts have even multiplicity, form separate $z$-classes, and this adds $|\delta'\left(\frac{n}{2}\right)|$ to Equation~\ref{DnZclPt2}. From the above arguments, Equations~\ref{DnZclPt1} and \ref{DnZclPt2}, we get that the number of $z$-classes in $D_n$, when $n$ is even is:
 $$\begin{matrix}\sum_{{\bf \lambda} \in \delta(n) }\left( \left(\prod_{i=1}^u \left(\left\lfloor \frac{l_i}{2}\right\rfloor + 1\right)\right)\left(\prod_{j=1}^v (w_j+1)\right)\right)+\\ \sum_{{\bf \lambda} \in \delta^\prime(n) }\left\lceil \frac{\left(\prod_{i=1}^u \left(\left\lfloor \frac{l_i}{2}\right\rfloor + 1\right)\right)\left(\prod_{j=1}^v (w_j+1)\right)}{2}\right\rceil-\zeta(n-2)+|\delta^\prime\left(\frac{n}{2}\right)|\end{matrix}$$
 
 \end{proof}

\section{$z$-classes of Coxeter groups of exceptional types}\label{z-classes_of_exceptional_types}
In the following table, we give the number of $z$-classes of Coxeter groups of all the exceptional types. We used the computer algebra software GAP to obtain this table. The code for computing $z$-classes of a group $G$ is given in the end of this section. We have used GAP package SLA \cite{SLA} for Coxeter groups of type $F_4, E_6, E_7,$ and $E_8.$

\begin{center}
\begin{table}
\caption{Table of number of $z$-classes in Irreducible Coxeter groups of exceptional types.}\label{Table-exceptional}
\begin{tabular}{|c|c|c|c|}
\hline
S. No. & Group & $\#$ Conjugacy Classes & $\#$ z-classes \\
\hline
1. & $F_4$ & 25 & 16\\
\hline
2. & $E_6$ & 25 & 24\\
\hline
3. & $E_7$ & 60 & 28\\
\hline
4. & $E_8$ & 112 & 65\\
\hline
5. & $H_3$ & 10 & 4 \\
\hline
6. & $H_4$ & 34 & 15\\
\hline
\end{tabular}
\end{table}
\end{center}

The code to compute $z$-classes of a group using GAP is given below:
 
\begin{center}
\begin{framed}
\begin{verbatim}
#This function returns complete list of z-classes of group g.

CentralizerClasses:=function(g)
local cl,cens,result,c,cen,pos,iso,p;
  if IsPermGroup(g) or IsPcGroup(g) then
    iso:=IdentityMapping(g);
    p:=g;
  else
    iso:=IsomorphismPermGroup(g);
    p:=Range(iso);
  fi;
  cl:=ConjugacyClasses(g);
  cens:=[]; # the iso-images of centralizers
  result:=[];
  for c in cl do
    cen:=Image(iso,Centralizer(c));
    pos:=First([1..Length(cens)],x->Size(cens[x])=Size(cen) and
      RepresentativeAction(p,cens[x],cen)<>fail);
    if pos=fail then
      # centralizer is new, make new class
      Add(cens,cen);
      Add(result,[c]);
    else
      # conjugate centralizer, add to result class
      Add(result[pos],c); 
    fi;
  od;
  return result;
end;
  
\end{verbatim}
\end{framed}
\end{center}

\printbibliography
\end{document}